\newtheorem{theorem}{Theorem}[section]
\newtheorem{proposition}[theorem]{Proposition}
\newtheorem{lemma}[theorem]{Lemma}
\theoremstyle{definition}
\newtheorem{definition}[theorem]{Definition}
\theoremstyle{remark}
\newtheorem{remark}[theorem]{Remark}
\newcommand{\N}{\mathbb{N}}
\newcommand{\Z}{\mathbb{Z}}
\newcommand{\C}{\mathbb{C}}
\newcommand{\fraku}{\mathfrak{u}}
\newcommand{\id}{\mathrm{id}}
\newcommand{\rmspan}{\operatorname{span}}
\newcommand{\leqs}{\leqslant}
\newcommand{\geqs}{\geqslant}
\newcommand{\mods}[1]{\operatorname{\mathnormal{#1}-mod}}
\newcommand{\fsl}{\mathfrak{sl}}
\newcommand{\Hom}{\mathrm{Hom}}
\newcommand{\End}{\mathrm{End}}
\newcommand{\TL}{\mathrm{TL}}
\newcommand{\tTL}{\tilde{\mathrm{TL}}}
\newcommand{\bTL}{\bar{\mathrm{TL}}}
\newcommand{\up}{+}
\newcommand{\down}{-}
\newcommand{\subalign}[1]{
  \vcenter{
    \Let@ \restore@math@cr \default@tag
    \baselineskip\fontdimen10 \scriptfont\tw@
    \advance\baselineskip\fontdimen12 \scriptfont\tw@
    \lineskip\thr@@\fontdimen8 \scriptfont\thr@@
    \lineskiplimit\lineskip
    \ialign{\hfil$\m@th\scriptstyle##$&$\m@th\scriptstyle{}##$\crcr
      #1\crcr
    }
  }
}
\def\clap#1{\hbox to 0pt{\hss#1\hss}}
\def\mathrlap{\mathpalette\mathrlapinternal}
\def\mathrlapinternal#1#2{%
\rlap{$\mathsurround=0pt#1{#2}$}}
\newcommand{\tl}[1]{\raisebox{-0.5\height + 2.5pt}{\includegraphics{tl_#1.pdf}}}
\newcommand{\domination}[1]{\raisebox{-0.5\height + 2.5pt}{\includegraphics{domination_#1.pdf}}}
\newcommand{\fusion}[1]{\raisebox{-0.5\height + 2.5pt}{\includegraphics{fusion_#1.pdf}}}
\newcommand{\etl}[1]{\raisebox{-0.5\height + 2.5pt}{\includegraphics{etl_#1.pdf}}}
\newcommand\arxiv[2]{\href{http://arXiv.org/abs/#1}{\texttt{arXiv:\allowbreak #1} #2}}
\newcommand\doi[2]{\href{http://doi.org/#1}{#2}}
\begin{document}

\raggedbottom

\title{Diagrammatic Construction of Representations of Small Quantum $\fsl_2$}

\author[C. Blanchet]{Christian Blanchet} 
\address{Institut de Mathématiques de Jussieu -- Paris Rive Gauche, UMR 7586, Université Paris Diderot -- Paris 7, Bâtiment Sophie Germain, 75205 Paris Cedex 13, France} 
\email{christian.blanchet@imj-prg.fr}

\author[M. De Renzi]{Marco De Renzi} 
\address{Department of Mathematics, Faculty of Science and Engineering, Waseda University, 3-4-1 \={O}kubo, Shinjuku-ku, Tokyo, 169-8555, Japan} 
\email{m.derenzi@kurenai.waseda.jp}
\address{Institute of Mathematics, University of Zurich, Winterthurerstrasse 190, CH-8057 Zurich, Switzerland}
\email{marco.derenzi@math.uzh.ch}

\author[J. Murakami]{Jun Murakami} 
\address{Department of Mathematics, Faculty of Science and Engineering, Waseda University, 3-4-1 \={O}kubo, Shinjuku-ku, Tokyo, 169-8555, Japan} 
\email{murakami@waseda.jp}

\begin{abstract}
 We provide a combinatorial description of the monoidal category generated by the fundamental representation of the small quantum group of $\fsl_2$ at a root of unity $q$ of odd order. Our approach is diagrammatic, and it relies on an extension of the Temperley--Lieb category specialized at $\delta = -q-q^{-1}$.
\end{abstract}

\maketitle
\setcounter{tocdepth}{3}

\section{Introduction}

The main goal of this paper is, very roughly speaking, to find a diagrammatic description of the category $\mods{\bar{U}}$ of finite-dimensional left $\bar{U}$-modules, where ${\bar{U} = \bar{U}_q \fsl_2}$ is a finite-dimensional Hopf algebra called the \textit{small quantum group of $\fsl_2$}. First introduced by Lusztig in \cite{L90}, $\bar{U}$ arises from a quantum deformation of the universal enveloping algebra of $\fsl_2$ with deformation parameter $q = e^{\smash{\frac{2 \pi i}{r}}}$, where $3 \leqs r \in \Z$ is an odd integer called the \textit{level}. The motivation behind this work is topological: indeed, $\bar{U}$ is a factorizable ribbon Hopf algebra, which means it can be used as an algebraic tool for building non-semisimple TQFTs \cite{DGP17}. At present, constructions rely either on the structure of Hopf algebras, or on some general categorical machinery \cite{DGGPR19}. Although this theory produces intreresting new phenomena, which differ strikingly from the semisimple case \cite{DGGPR20}, it exploits a rather elaborate technical setup, which can require some time to fully digest. However, in the case of the small quantum group $\bar{U}$, we can look for a more combinatorial approach by figuring out an explicit description of a large category of $\bar{U}$-modules in diagrammatic terms. This idea is pursued in \cite{DM20}, where a skein theoretic reformulation of the topological invariants of closed 3-manifold constructed from $\bar{U}$ is obtained, based on results provided here.

The starting point for our work is a diagrammatic description of the category of representations of a different Hopf algebra $U = U_q \fsl_2$ called the \textit{full quantum group}, or the \textit{divided power quantum group} of $\fsl_2$. This version of quantum $\fsl_2$ is infinite-dimensional, and its definition is also due to Lusztig. In fact, the small quantum group $\bar{U}$ was originally introduced as a finite-dimensional Hopf subalgebra of $U$. It was first shown in \cite{MM92} that $\End_{U}(X^{\otimes m})$ is isomorphic to the \textit{$m$th Temperley--Lieb algebra} $\TL(m)$ of \cite{TL71} with parameter $\delta = -A^2 - A^{-2}$ specialized at $A = q^{\frac{r+1}{2}}$, where $X$ is the fundamental simple $U$-module of dimension 2, see also \cite{FK97,AST15}. Since the small quantum group $\bar{U}$ is a Hopf subalgebra of $U$, we have a natural restriction functor from $\mods{U}$ to $\mods{\bar{U}}$. However, there exist indecomposable $U$-modules which become decomposable when considered as $\bar{U}$-modules, and $\End_{\bar{U}}(X^{\otimes m})$ is not isomorphic to $\TL(m)$ in general. 

The question of understanding $\End_{\bar{U}}(X^{\otimes m})$ has been studied in greater detail in the case of even roots of unity, when the deformation parameter is $q = e^{\frac{2 \pi i}{r}}$ for some even integer $4 \leqs r \in \Z$. Indeed, in this case $U$ contains a finite-dimensional Hopf subalgebra $\bar{U}$ which is called the \textit{restricted quantum group of $\fsl_2$}. In \cite{GST12}, it is shown that an extension of the $m$th Temperley--Lieb algebra $\TL(m)$, called the \textit{lattice W-algebra}, is isomorphic to $\End_{\bar{U}}(X^{\otimes m})$ for every $m \geqs 0$ when $r = 4$, and it is conjectured that the isomorphism holds in general for every even $r \geqs 4$. In \cite{M17}, generators and relations for a description of $\End_{\bar{U}}(X^{\otimes m})$ in terms of planar diagrams are provided for every $m \geqs 0$ and every even $r \geqs 4$, and it is conjectured these give a complete presentation. However, for even values of $r$, the Hopf algebra $\bar{U}$ is not ribbon, and although it admits a ribbon extension, the latter is not factorizable. In particular, the machinery of \cite{DGP17} only produces quantum invariants of closed $3$-manifolds, not TQFTs. This is why we turn our attention to the less discussed odd level case. In order to do this, we consider an extension $\tTL$ of the Temperley--Lieb category $\TL$, which is obtained by introducing four additional generating morphisms, as well as several relations between them. These additional generators correspond to the non-trivial splitting, in $\mods{\bar{U}}$, of a certain indecomposable $U$-module. The definition of $\tTL$, which is given in Section~\ref{S:extended_TL_category}, allows us to establish our main result.

\begin{theorem}\label{T:functor}
 There exists a full monoidal linear functor
 \[
  F_{\tTL} : \tTL \to \mods{\bar{U}}.
 \]
\end{theorem}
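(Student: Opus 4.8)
The plan is to construct $F_{\tTL}$ by prescribing it on generators, to check it descends to the defining relations of $\tTL$, and then to deal with fullness, which is the substantive point. On the single generating object of $\tTL$ we set $F_{\tTL}$ equal to the fundamental $2$-dimensional simple $\bar{U}$-module $X$, so that the $m$-th object is sent to $X^{\otimes m}$; since $\tTL$ is monoidally generated by one object this determines $F_{\tTL}$ on all objects. On the Temperley--Lieb generators we send the cup and cap to the coevaluation and evaluation of $X$ supplied by the pivotal structure of $\mods{\bar{U}}$. That these satisfy the relations of $\TL$ with loop value $\delta=-q-q^{-1}$ is immediate: $\bar{U}$ is a Hopf subalgebra of $U$, the restriction functor $\mods{U}\to\mods{\bar{U}}$ is monoidal and pivotal, and it carries the cup and cap of $X$ as a $U$-module — which realize $\TL$ at $\delta=-q-q^{-1}$ by \cite{MM92} — to the cup and cap of $X$ as a $\bar{U}$-module. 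Finally, for the four extra generators we exhibit explicit $\bar{U}$-linear maps between the relevant tensor powers of $X$, read off from the non-trivial decomposition $P|_{\bar{U}}\cong P_1\oplus P_2$ of the distinguished indecomposable $U$-module $P$ into indecomposable $\bar{U}$-modules, by expressing the associated projections and intertwiners of $P|_{\bar{U}}$ through the idempotent that cuts $P$ out of a suitable $X^{\otimes N}$.

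With $F_{\tTL}$ fixed on generators, well-definedness reduces to verifying that the image of each defining relation of $\tTL$ holds in $\mods{\bar{U}}$. The relations internal to the $\TL$ part hold automatically by the previous paragraph, as do any relations all of whose terms happen to be $U$-module maps. The finitely many remaining relations — those genuinely involving the four new generators — are checked by a direct computation: one evaluates both sides on explicit bases of the relevant $X^{\otimes m}$, using the comultiplication of $\bar{U}$ and the chosen formulas for the new maps. Monoidality and $\kk$-linearity of $F_{\tTL}$ are built into the construction and require nothing further.

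The core of the theorem is fullness: for all $m,n$ the map $\Hom_{\tTL}(m,n)\to\Hom_{\bar{U}}(X^{\otimes m},X^{\otimes n})$ must be surjective. The strategy is to compare with the $U$-case. From \cite{MM92} (in its $\Hom$-space form) the $U$-module $X^{\otimes m}$ decomposes into indecomposable tilting summands in a way controlled diagrammatically by Jones--Wenzl idempotents, and $\Hom_{U}(X^{\otimes m},X^{\otimes n})$ is spanned by planar $(m,n)$-Temperley--Lieb diagrams, all of which lie in the image of $F_{\tTL}$. Passing to $\bar{U}$, the only indecomposable $U$-tilting summands that cease to be indecomposable are the copies of $P$, each splitting as $P_1\oplus P_2$; hence $\Hom_{\bar{U}}(X^{\otimes m},X^{\otimes n})$ is obtained from $\Hom_{U}(X^{\otimes m},X^{\otimes n})$ by adjoining, for each pair of $P$-summands in source and target, the extra intertwiners coming from $\Hom_{\bar{U}}(P_i,P_j)$ with $i\neq j$ together with the projections of $P|_{\bar{U}}$ onto $P_1$ and $P_2$. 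One then shows that every such extra morphism is a composite of $\TL$ diagrams — which locate and isolate the $P$-summands via the appropriate idempotents — with the four new generators and their tensor products. This reduces fullness to a single computation at the smallest tensor power in which $P$ occurs, checked by hand (and of the same nature as the relation checks above), followed by a bookkeeping argument that propagates it to all $m,n$.

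The main obstacle is this last reduction. The delicate point is controlling the $\bar{U}$-idempotent decomposition of $X^{\otimes m}$ \emph{diagrammatically}: one must describe precisely how the Jones--Wenzl idempotents detecting the $U$-tilting decomposition interact with the new generators detecting the finer $\bar{U}$-splitting, and confirm that the four chosen generators together with the $\TL$ diagrams already produce \emph{every} new intertwiner, so that no part of $\Hom_{\bar{U}}(X^{\otimes m},X^{\otimes n})$ escapes the image of $F_{\tTL}$. (Conversely, one does not need the relations of $\tTL$ to be tight, since faithfulness is not asserted here.) Carrying this out forces a genuine use of the representation theory of $\bar{U}$ — the structure of $P$, its endomorphism algebra, and the way $P$ embeds into the tensor powers of $X$ — which is where the real work lies.
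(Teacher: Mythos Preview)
Your overall architecture matches the paper: define $F_{\tTL}$ on generators, verify the relations, then prove fullness by reducing to Hom spaces between indecomposable summands. The fullness argument, however, has two real gaps.

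First, you invoke ``Jones--Wenzl idempotents'' to control the decomposition of $X^{\otimes m}$ diagrammatically. But the ordinary $f_m$ are undefined in $\TL$ for $m \geqs r$ --- Wenzl's recursion has a pole at $A = q^{\frac{r+1}{2}}$ --- so you have no idempotent in $\TL$ cutting out the indecomposable projective summand $P_m \subset X^{\otimes m}$ for $r \leqs m \leqs 2r-2$. The paper supplies this missing tool: the \emph{non-semisimple} Jones--Wenzl idempotents $g_m$ (equation~\eqref{E:g_def}, Lemma~\ref{L:recurrence_g_h}), which are evaluable at the root of unity and satisfy $F_{\TL}(g_m) = P_m$ (Lemma~\ref{L:functor}). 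The domination statement (Proposition~\ref{P:domination}) then shows that $\{f_0,\ldots,f_{r-1},g_r,\ldots,g_{2r-2}\}$ dominates $\tTL$; its proof uses the defining relation~\eqref{E:i_p} of $\tTL$ to split $f_{2r-1}$ and thereby keep the induction inside this finite range. This is precisely the ``bookkeeping argument'' you allude to, and it does not go through without the $g_m$.

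Second, your inventory of the new $\bar{U}$-morphisms is incomplete. You claim the only extras beyond $\Hom_U$ come from the splitting of a single module $P$ and the maps among its pieces. In fact, for every $r \leqs m \leqs 2r-2$ one has $\dim_\C \Hom_{\bar{U}}(P_m,P_{3r-m-2}) = 2$, while $\TL(g_m,g_{3r-m-2}) = 0$ by parity (end of Section~\ref{S:monoidal_linear_functor}); the modules $P_m$ and $P_{3r-m-2}$ are indecomposable over $\bar{U}$ and are not the pieces of any split $U$-module, so your mechanism does not produce their intertwiners. The paper constructs explicit preimages $c_m^\varepsilon, d_m^\varepsilon \in \tTL$ of the generators $\gamma_m^\pm$ of these Hom spaces (equation~\eqref{E:c_d_def}, Lemma~\ref{L:tilde_functor}), built from the new generators together with the $g_m$. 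Your reduction ``to a single computation at the smallest tensor power in which $P$ occurs'' does not account for this family, which ranges over all $r \leqs m \leqs 2r-2$.
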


This allows us to describe in diagrammatic terms every morphism in the full monoidal subcategory of $\mods{\bar{U}}$ generated by $X$.

\subsection*{Structure of the paper}

In Section~\ref{S:small_quantum_group}, we recall the definition of the small quantum group $\bar{U}$ and the main properties of its category of finite-dimensional representations $\mods{\bar{U}}$. In particular, we collect definitions of simple and indecomposable projective $\bar{U}$-modules, and we give explicit formulas for $\bar{U}$-module morphisms between them. In Section~\ref{S:TL_category}, we recall the definition of the Temperley--Lieb category $\TL_A$ of indeterminate $A$, and within it we highlight a special family of idempotent endomorphisms $g_m \in \TL_A(m) = \TL_A(m,m)$ for $r \leqs m \leqs 2r-2$. Under the specialization $A = q^{\frac{r+1}{2}}$, these planar diagrams, which were first considered in \cite{GW93}, extend the standard family of (\textit{simple}) \textit{Jones--Wenzl idempotents} $f_m \in \TL_A(m)$ of \cite{J83,W87}, which are well-defined for $0 \leqs m \leqs r-1$. Since the endomorphisms $g_m$ can be interpreted in terms of indecomposable projective $\bar{U}$-modules, we refer to them as \textit{non-semisimple Jones--Wenzl idempotents}. Our explicit recursive formulas provide an odd level counterpart to the work of Ibanez \cite{I15} and Moore \cite{M18}, who independently came up with similar ones for the restricted quantum group of $\fsl_2$ at even roots of unity. 
In Section~\ref{S:monoidal_linear_functor}, we recall the definition of a well-known monoidal linear functor $F_{\TL} : \TL \to \mods{\bar{U}}$, where $\TL$ denotes the category obtained from $\TL_A$ under the specialization $A = q^{\frac{r+1}{2}}$, and we explicitly compute the image of simple and non-semisimple Jones--Wenzl idempotents. In doing so, we show that the functor $F_{\TL}$ is not full. In Section~\ref{S:extended_TL_category}, we define the extended Temperley--Lieb category $\tTL$ by introducing four additional generating morphisms to $\TL$, and we extend $F_{\TL}$ to a functor $F_{\tTL} : \tTL \to \mods{\bar{U}}$. This allows us to prove our main result in Section~\ref{S:proof}. We do so in two steps: First, we show that the union of simple and non-semisimple Jones--Wenzl idempotents dominates the category $\tTL$; Then, we exhibit explicit morphisms which ensure fullness of $F_{\tTL}$. In Section~\ref{S:additional_relations}, we list additional relations satisfied by generating morphisms of $\tTL$ in the quotient $\bTL = \tTL / \ker F_{\tTL}$, which will be useful in future works. It remains an open question to understand whether these relations provide a complete presentation of $\bTL$, in the sense that they generate the kernel of $F_{\tTL}$. However, it seems likely that further relations, generalizing equation~\eqref{E:p_i} to the case $r \leqs m \leqs 2r-2$, will be needed.

\subsection*{Acknowledgments}

This work was supported by JSPS Fellowship for Research in Japan ID~PE18705.

\section{Small quantum group}\label{S:small_quantum_group}

In this section we recall the definition, due to Lusztig, of the small quantum group of $\fsl_2$ at odd roots of unity. To start, let us fix an odd integer $3 \leqs r \in \Z$ and let us consider the primitive $r$th root of unity $q = e^{\frac{2 \pi i}{r}}$. For every integer $k \geqs 0$ we introduce the notation 
\[
 \{ k \} := q^k - q^{-k},
 \quad \{ k \}' := q^k + q^{-k},
 \quad [k] := \frac{\{ k \}}{\{ 1 \}},
 \quad [k]! := \prod_{j=1}^k [j].
\]
We denote with $\bar{U}$ the algebra over $\C$ with generators $\{ E,F,K \}$ and relations
\begin{gather*}
 E^r = F^r = 0, \quad K^r = 1, \\*
 K E K^{-1} = q^2 E, \quad K F K^{-1} = q^{-2} F, \quad [E,F] = \frac{K - K^{-1}}{q-q^{-1}}.
\end{gather*}
This algebra was first introduced in \cite[Section~6.5]{L90} under the notation $\tilde{\fraku}$. We can make $\bar{U}$ into a Hopf algebra, called the \textit{small quantum group of $\fsl_2$}, by setting
\begin{align*}
 \Delta(E) &= E \otimes K + 1 \otimes E, & \varepsilon(E) &= 0, & S(E) &= -E K^{-1}, \\*
 \Delta(F) &= K^{-1} \otimes F + F \otimes 1, & \varepsilon(F) &= 0, & S(F) &= - K F, \\*
 \Delta(K) &= K \otimes K, & \varepsilon(K) &= 1, & S(K) &= K^{-1}.
\end{align*}
Lusztig actually considers the opposite coproduct, while we use the one
of Kassel \cite[Section~VII.1]{K95} and Majid \cite[Example~3.4.3]{M95}. The Hopf algebra $\bar{U}$ admits a ribbon structure \cite[Section~3.5]{R93} which is factorizable \cite[Corollary~A.3.3]{L94}, although we will need neither result in this paper. 
Moving on to the representation theory of $\bar{U}$, we recall the definition of simple and indecomposable projective $\bar{U}$-modules. The reader can find more details in \cite[Section~3]{S94}, although our notation is closer to the one of \cite[Appendix~C]{FGST05} and \cite[Section~4]{A08}. Note however that all these references actually focus on the restricted quantum group of $\fsl_2$, which corresponds to even values of $r$ instead of odd ones. For every integer $0 \leqs m \leqs r-1$ we denote with $X_m$ the simple $\bar{U}$-module with basis 
\[
 \{ a_j^m \mid 0 \leqs j \leqs m \}
\]
and action given, for all integers $0 \leqs j \leqs m$, by
\begin{equation*}
 \begin{split}
  K \cdot a_j^m &= q^{m-2j} a_j^m, \\
  E \cdot a_j^m &= [j][m-j+1] a_{j-1}^m, \\
  F \cdot a_j^m &= a_{j+1}^m,
 \end{split}
\end{equation*}
where $a_{-1}^m := a_{m+1}^m := 0$. As a $\bar{U}$-module, $X_m$ is generated by the \textit{highest weight vector} $a_0^m$. Next, for every integer $r \leqs m \leqs 2r-2$ we denote with $P_m$ the indecomposable projective $\bar{U}$-module with basis 
\[
 \{ a_j^m, x_k^m, y_k^m, b_j^m \mid 0 \leqs j \leqs 2r-m-2, 0 \leqs k \leqs m-r \}
\] 
and action given, for all integers $0 \leqs j \leqs 2r-m-2$ and $0 \leqs k \leqs m-r$, by
\begin{equation*}
 \begin{split}
  K \cdot a_j^m &= q^{-m-2j-2} a_j^m, \\
  E \cdot a_j^m &= -[j][m+j+1] a_{j-1}^m, \\
  F \cdot a_j^m &= a_{j+1}^m, \\
  K \cdot x_k^m &= q^{m-2k} x_k^m, \\
  E \cdot x_k^m &= [k][m-k+1] x_{k-1}^m, \\ 
  F \cdot x_k^m &= 
  \begin{cases}
   x_{k+1}^m & 0 \leqs k < m-r, \\
   a_0^m & k = m-r,
  \end{cases} \\
  K \cdot y_k^m &= q^{m-2k} y_k^m, \\
  E \cdot y_k^m &= 
  \begin{cases}
   a_{2r-m-2}^m & k = 0, \\
   [k][m-k+1] y_{k-1}^m & 0 < k \leqs m-r,
  \end{cases} \\
  F \cdot y_k^m &= y_{k+1}^m, \\
  K \cdot b_j^m &= q^{-m-2j-2} b_j^m, \\
  E \cdot b_j^m &= 
  \begin{cases}
   x_{m-r}^m & j = 0, \\
   a_{j-1}^m - [j][m+j+1] b_{j-1}^m & 0 < j \leqs 2r-m-2, 
  \end{cases} \\
  F \cdot b_j^m &= 
  \begin{cases}
   b_{j+1}^m & 0 \leqs j < 2r-m-2, \\
   y_0^m & j = 2r-m-2,
  \end{cases}
 \end{split}
\end{equation*}
where $a_{-1}^m := a_{2r-m-1}^m := x_{-1}^m := y_{m-r+1}^m := 0$. As a $\bar{U}$-module, $P_m$ is generated by the \textit{dominant vector} $b_0^m$. Let us mention that $P_m$ is usually denoted $P_{2r-m-2}$, because it is the projective cover of $X_{2r-m-2}$ for every integer $r \leqs m \leqs 2r-2$. Our change of notation is motivated by later convenience. 
Next, we need to specify identifications for relevant submodules of iterated tensor products of the fundamental simple $\bar{U}$-module $X := X_1$. Many details about decompositions of tensor products can be found in \cite[Section~3]{S94}, and an even more general treatment is given in \cite[Section~3]{KS09}. Again, these references discuss the restricted quantum group of $\fsl_2$, which corresponds to even values of $r$ rather than odd ones, but we will not actually need any of their results, as all the formulas we will list in here below can be easily checked by hand. For every integer $0 \leqs m \leqs r-1$, there is a unique isomorphic copy of $X_m$ among the submodules of $X^{\otimes m}$, which we still denote $X_m$ by abuse of notation. We construct it recursively. If $m = 0$, a standard basis of $X_0 \subset X^{\otimes 0}$ is obtained by setting $a_0^0 := 1$. If $0 < m \leqs r-1$, a standard basis of $X_m \subset X^{\otimes m}$ is obtained by setting
\begin{equation}\label{E:X_m}
 a_j^m := F^j \cdot \left( a_0^{m-1} \otimes a_0^1 \right).
\end{equation}
Similarly, for every integer $r \leqs m \leqs 2r-2$, there is an isomorphic copy of $P_m$ of multiplicity one among the submodules of $X^{\otimes m}$, which we still denote $P_m$ by abuse of notation. However, this submodule is no longer unique (for instance, although $X_{r-1} \otimes X_1$ and $X_1 \otimes X_{r-1}$ are isomorphic, they are not the same submodule of $X^{\otimes r}$). Therefore, we fix a choice recursively. If $m = r$, a standard basis of $P_r \subset X^{\otimes r}$ is obtained by setting
\begin{equation}\label{E:P_r}
 \begin{split}
  a_j^r &:= F^j \cdot \left( q a_0^{r-1} \otimes a_1^1 + a_1^{r-1} \otimes a_0^1 \right), \\
  x_0^r &:= a_0^{r-1} \otimes a_0^1, \\
  b_j^r &:= F^j \cdot \left( a_0^{r-1} \otimes a_1^1 \right), \\
  y_0^r &:= a_{r-1}^{r-1} \otimes a_1^1.
 \end{split}
\end{equation}
If $r < m \leqs 2r-2$, a standard basis of $P_m \subset X^{\otimes m}$ is obtained by setting
\begin{equation}\label{E:P_m}
 \begin{split}
  a_j^m &:= F^j \cdot \left( [m+1] q a_0^{m-1} \otimes a_1^1 + a_1^{m-1} \otimes a_0^1 \right), \\
  x_k^m &:= F^k \cdot \left( x_0^{m-1} \otimes a_0^1 \right), \\
  b_j^m &:= F^j \cdot \left( \frac{q^{-m}}{[m+1]} a_0^{m-1} \otimes a_1^1 + [m]q b_0^{m-1} \otimes a_1^1 + \frac{[m]}{[m+1]} b_1^{m-1} \otimes a_0^1 \right), \\
  y_k^m &:= F^k \cdot \left( \frac{q^{-m}}{[m+1]} a_{2r-m-1}^{m-1} \otimes a_1^1 + \frac{[m]}{[m+1]} y_0^{m-1} \otimes a_0^1 \right).
 \end{split}
\end{equation}
For $m = 2r+1$, we denote with $X_{r-1}^+$ and $X_{r-1}^-$ the two submodules of $X^{\otimes 2r-1}$ isomorphic to $X_{r-1}$ with standard bases obtained by setting
\begin{equation}\label{E:X_r-1_+_-}
 \begin{split}
  a_j^{r-1,+} &:= F^j \cdot \left( x_0^{2r-2} \otimes a_0^1 \right), \\
  a_j^{r-1,-} &:= F^j \cdot \left( q a_0^{2r-2} \otimes a_1^1 - y_0^{2r-2} \otimes a_0^1 \right).
 \end{split}
\end{equation}
The direct sum $X_{2r-1} := X_{r-1}^+ \oplus X_{r-1}^-$ can be lifted to a simple representation of the divided power quantum group, and it is this non-trivial splitting which will motivate the introduction of four additional generating morphisms in the Temperley--Lieb category. It will also be convenient to fix isomorphisms for supplements of these submodules. If $m = 2$, we denote with $X'_0$ the subspace of $X_1 \otimes X_1$ isomorphic to $X_0$ with standard basis obtained by setting
\begin{equation}\label{E:tilde_X_0}
 {a'}_0^0 := q a_0^1 \otimes a_1^1 - a_1^1 \otimes a_0^1.
\end{equation}
If $2 < m \leqs r-1$, we denote with $X'_{m-2}$ the subspace of $X_{m-1} \otimes X_1$ isomorphic to $X_{m-2}$ with standard basis obtained by setting
\begin{equation}\label{E:tilde_X_m-2}
 {a'}_j^{m-2} := F^j \cdot \left( [m-1]q a_0^{m-1} \otimes a_1^1 - a_1^{m-1} \otimes a_0^1 \right).
\end{equation}
If $m = r+1$, we denote with ${X'}_{r-1}^\up$ and ${X'}_{r-1}^\down$ the submodules of $X_{r-1} \otimes X_1 \otimes X_1$ isomorphic to $X_{r-1}$ with standard bases obtained by setting
\begin{equation}\label{E:tilde_X_r-1_+_-}
 \begin{split}
  {a'}_j^{r-1,\up} &:= F^j \cdot \left( q a_0^{r-1} \otimes a_0^1 \otimes a_1^1 - a_0^{r-1} \otimes a_1^1 \otimes a_0^1 \right), \\
  {a'}_j^{r-1,\down} &:= F^j \cdot \left( q a_0^{r-1} \otimes a_1^1 \otimes a_0^1 + a_1^{r-1} \otimes a_0^1 \otimes a_0^1 \right).
 \end{split}
\end{equation}
If $r \leqs m \leqs 2r-1$, we denote with $P'_{m-2}$ the subspace of $P_{m-1} \otimes X_1$ isomorphic to $P_{m-2}$  with standard basis obtained by setting
\begin{equation}\label{E:tilde_P_m-2}
 \begin{split}
  {a'}_j^{m-2} &:= F^j \cdot \left( -a_0^{m-1} \otimes a_0^1 \right), \\
  {x'}_k^{m-2} &:= F^k \cdot \left( [m-1]q x_0^{m-1} \otimes a_1^1 - x_1^{m-1} \otimes a_0^1 \right), \\ 
  {b'}_j^{m-2} &:= F^j \cdot \left( \frac{q^m}{[m-1]} x_{m-r-1}^{m-1} \otimes a_1^1 - \frac{[m]}{[m-1]} b_0^{m-1} \otimes a_0^1 \right), \\
  {y'}_k^{m-2} &:= F^k \cdot \left( [m]q y_0^{m-1} \otimes a_1^1 - \frac{[m]}{[m-1]} y_1^{m-1} \otimes a_0^1 \right).
 \end{split}
\end{equation}
Finally, let us fix our notation for $\bar{U}$-module morphisms. Dimensions of non-zero vector spaces of morphisms between simple and indecomposable projective objects of $\mods{\bar{U}}$ are given by
\begin{align*}
 \dim_\C \End_{\bar{U}}(X_m) &= 1 & 0 \leqs m \leqs r-1, \\*
 \dim_\C \End_{\bar{U}}(P_m) &= 2 & r \leqs m \leqs 2r-2, \\*
 \dim_\C \End_{\bar{U}}(X_{2r-1}) &= 4, & \\
 \dim_\C \Hom_{\bar{U}}(P_m,X_{2r-m-1}) &= \dim_\C \Hom_{\bar{U}}(X_{2r-m-1},P_m) = 1 & r \leqs m \leqs 2r-2, \\*
 \dim_\C \Hom_{\bar{U}}(P_m,P_{3r-m-2}) &= 2 & r \leqs m \leqs 2r-2, \\*
 \dim_\C \Hom_{\bar{U}}(X_{r-1},X_{2r-1}) &= \dim_\C \Hom_{\bar{U}}(X_{2r-1},X_{r-1}) = 2. &
\end{align*}
Non-trivial morphisms between simple and indecomposable projective objects of $\mods{\bar{U}}$ are:
\begin{itemize}[label=$\cdot$]
 \item $\varepsilon_m \in \End_{\bar{U}}(P_m)$ determined by
  \begin{equation}\label{E:epsilon_m_def}
   \varepsilon_m(a_j^m) := \varepsilon_m(x_k^m) := \varepsilon_m(y_k^m) := 0, \quad \varepsilon_m(b_j^m) := a_j^m
  \end{equation}
  for all integers $r \leqs m \leqs 2r-2$, $0 \leqs j \leqs 2r-m-2$, and $0 \leqs k \leqs m-r$;
 \item $\pi_m \in \Hom_{\bar{U}}(P_m,X_{2r-m-2})$ determined by
  \begin{equation}\label{E:pi_m_def}
   \pi_m(a_j^m) := \pi_m(x_k^m) := \pi_m(y_k^m) := 0, \quad \pi_m(b_j^m) := a_j^{2r-m-2}
  \end{equation}
  for all integers $r \leqs m \leqs 2r-2$, $0 \leqs j \leqs 2r-m-2$, and $0 \leqs k \leqs m-r$;
 \item $\iota_m \in \Hom_{\bar{U}}(X_{2r-m-2},P_m)$ determined by
  \begin{equation}\label{E:iota_m_def}
   \iota_m(a_j^{2r-m-2}) := a_j^m
  \end{equation}
  for all integers $r \leqs m \leqs 2r-2$ and $0 \leqs j \leqs 2r-m-2$;
 \item $\gamma_m^+ \in \Hom_{\bar{U}}(P_m,P_{3r-m-2})$ determined by
  \begin{equation}\label{E:gamma_m_+_def}
   \gamma_m^+(a_j^m) := \gamma_m^+(x_k^m) := 0, \quad \gamma_m^+(y_k^m) := a_k^{3r-m-2}, \quad \gamma_m^+(b_j^m) := x_j^{3r-m-2}
  \end{equation}
  for all integers $r \leqs m \leqs 2r-2$, $0 \leqs j \leqs 2r-m-2$, and $0 \leqs k \leqs m-r$;
 \item $\gamma_m^- \in \Hom_{\bar{U}}(P_m,P_{3r-m-2})$ determined by
  \begin{equation}\label{E:gamma_m_-_def}
   \gamma_m^-(a_j^m) := \gamma_m^-(y_k^m) := 0, \quad \gamma_m^-(x_k^m) := a_k^{3r-m-2}, \quad \gamma_m^-(b_j^m) := y_j^{3r-m-2}
  \end{equation}
  for all integers $r \leqs m \leqs 2r-2$, $0 \leqs j \leqs 2r-m-2$, and $0 \leqs k \leqs m-r$;
 \item $\pi_{2r-1}^\varepsilon \in \Hom_{\bar{U}}(X_{2r-1},X_{r-1})$ determined by
  \begin{equation}\label{E:pi_2r-1_def}
   \pi_{2r-1}^\varepsilon(a_j^{r-1,\varepsilon'}) := \delta_{\varepsilon,\varepsilon'} a_j^{r-1} 
  \end{equation}
  for all $\varepsilon,\varepsilon' \in \{ +,- \}$ and every integer $0 \leqs j \leqs r-1$;
 \item $\iota_{2r-1}^\varepsilon \in \Hom_{\bar{U}}(X_{r-1},X_{2r-1})$ determined by
  \begin{equation}\label{E:iota_2r-1_def}
   \iota_{2r-1}^\varepsilon(a_j^{r-1}) := a_j^{r-1,\varepsilon} 
  \end{equation}
  for every $\varepsilon \in \{ +,- \}$ and every integer $0 \leqs j \leqs r-1$.
\end{itemize}
These morphisms satisfy the following relations:
\begin{gather}
 \varepsilon_m = \iota_m \pi_m = \gamma_{3r-m-2}^+ \gamma_m^- = \gamma_{3r-m-2}^- \gamma_m^+, \label{E:epsilon_m_rel_1} \\
 \varepsilon_m^2 = \gamma_{3r-m-2}^+ \gamma_m^+ = \gamma_{3r-m-2}^- \gamma_m^- = 0, \label{E:epsilon_m_rel_2} \\
 \pi_{2r-1}^{\varepsilon'} \iota_{2r-1}^\varepsilon = \delta_{\varepsilon,\varepsilon'} \id_{X_{r-1}}, \label{E:pi_iota_rel_1} \\
 \sum_{\varepsilon,\varepsilon'} \iota_{2r-1}^\varepsilon \pi_{2r-1}^\varepsilon = \id_{X_{2r-1}}. \label{E:pi_iota_rel_2}
\end{gather}

\section{Temperley--Lieb category}\label{S:TL_category}

In this section we discuss a generalization of Jones--Wenzl idempotents at odd roots of unity. Non-semisimple Temperley--Lieb algebras at roots of unity and evaluable idempotents were first studied by Goodman and Wenzl in \cite{GW93}. Recursive formulas for these idempotents already appeared in the work of Ibanez \cite{I15} and Moore \cite{M18}, who independently treated the case of even roots of unity, which is complementary to ours. 

For an indeterminate $A$ and for every integer $k \geqs 0$ we introduce the notation 
\[
 \{ k \}_A := A^{2k} - A^{-2k},
 \quad \{ k \}'_A := A^{2k} + A^{-2k},
 \quad [k]_A := \frac{\{ k \}_A}{\{ 1 \}_A},
 \quad [k]_A! := \prod_{j=1}^k [j]_A.
\] 

The \textit{Temperley--Lieb category} $\TL_A$ is the $\C(A)$-linear category with set of objects given by $\N$, and with vector space of morphisms from $m \in \TL_A$ to $m' \in \TL_A$ denoted $\TL_A(m,m')$ and linearly generated by planar $(m,m')$-tangles modulo the subspace linearly generated by planar $(m,m')$-tangles of the form
\[
 u \cup t + [2]_A \cdot t,
\]
where $t$ is a planar $(m,m')$-tangle, and where $u$ is the unknot. Composition in $\TL_A$ is defined by vertical gluing, and denoted like multiplication, while tensor product is defined by horizontal juxtaposition. Equipped with this monoidal structure, it is easy to see that $\TL_A$ is a rigid category: for every $m \in \TL_A$ the dual $m^* \in \TL_A$ is given by $m$ itself, while for every $t \in \TL_A(m,m')$ the dual $t^* \in \TL_A(m',m)$ is obtained by applying a rotation of angle $\pi$ to $t$. For every $m \in \TL_A$, the $m$th \textit{Temperley--Lieb algebra} is the $\C(A)$-algebra $\TL_A(m) := \TL_A(m,m)$. 

Let us recall now the definition of \textit{simple Jones--Wenzl idempotents}. These endomorphisms of $\TL_A$ were first discovered by Jones \cite{J83}, although the recursive definition given here is due to Wenzl \cite{W87}, see also Lickorish \cite{L97}. For every integer $m \geqs 0$ the \textit{$m$th simple Jones--Wenzl idempotent} $f_m \in \TL_A(m)$ is recursively defined as 
\begin{equation}\label{E:f_def}
 \tl{f_0} := \tl{f_0_a} \qquad \tl{f_1} := \tl{f_1_a} \qquad \tl{f_m} := \tl{f_m_a} + \frac{[m-1]_A}{[m]_A} \cdot \hspace*{-5pt} \tl{f_m_b}
\end{equation}
These endomorphisms satisfy
\begin{equation}\label{E:f_cups_caps}
 \tl{f_m_m-2_a} \hspace*{-5pt} = \tl{f_m_m-2_b} \hspace*{-5pt} = 0
\end{equation}
for all integers $1 \leqs j \leqs m-1$. This implies
\begin{equation}\label{E:f_f}
 \tl{f_f_m_n_a} = \tl{f_f_m_n_b} = \tl{f_m}
\end{equation}
for all integers $0 \leqs n \leqs m$. It can be shown that $f^*_m = f_m$ for every integer $m \geqs 0$. Furthermore, for all integers $0 \leqs k \leqs m$ we have
\begin{equation}\label{E:f_ptr_k}
 \tl{f_m_ptr_k} = (-1)^{k} \frac{[m+1]_A}{[m-k+1]_A} \cdot \tl{f_m-k}
\end{equation}
Now we can define a new family of endomorphisms $g_m, h_m \in \TL_A(m)$ for every integer $r \leqs m \leqs 2r-2$ by setting 
\begin{equation}
 g_m := f_m + \frac{1}{[r]_A} \cdot h_m \label{E:g_def}
\end{equation}
where
\begin{equation}
 \tl{h_m} := (-1)^m [2r-m-1]_A \cdot \hspace*{-5pt} \tl{h_m_closed} \label{E:h_def}
\end{equation}
These endomorphisms satisfy
\begin{align}
 \tl{g_m_m-2_a} \hspace*{-5pt} &= \tl{g_m_m-2_b} \hspace*{-5pt} = 0 \qquad \qquad j \neq r-1 \label{E:g_cups_caps} \\
 \tl{h_m_m-2_a} \hspace*{-5pt} &= \tl{h_m_m-2_b} \hspace*{-5pt} = 0 \qquad \qquad j \neq r-1 \label{E:h_cups_caps}
\end{align}
for all integers $r \leqs m \leqs 2r-2$ and $1 \leqs j \leqs m-1$, provided $j$ is not equal to $r-1$. We clearly have
\begin{align}
 \tl{f_g_m_n_a} &= \tl{f_g_m_n_b} = \tl{f_m} \label{E:f_g} \\
 \tl{f_h_m_n_a} &= \tl{f_h_m_n_b} = 0 \label{E:f_h}
\end{align}
for all integers $r \leqs n \leqs m \leqs 2r-2$. Furthermore, a direct computation involving equation~\eqref{E:f_ptr_k} shows
\begin{align}
 \tl{g_g_m_n_a} &= \tl{g_g_m_n_b} = \tl{g_m} \label{E:g_g} \\
 \tl{g_h_m_n_a} &= \tl{g_h_m_n_b} = \tl{h_m} \label{E:g_h} \\
 \tl{h_h_m_n_a} &= \tl{h_h_m_n_b} = [r]_A \cdot \tl{h_m} \label{E:h_h} 
\end{align}
for all integers $r \leqs n \leqs m \leqs 2r-2$. Observe that $g^*_{2r-2} = g_{2r-2}$ and $h^*_{2r-2} = h_{2r-2}$.

\begin{lemma}\label{L:g_h_ptr_k}
 For every integer $r \leqs m \leqs 2r-2$, if $0 \leqs k \leqs m-r$ we have
 \begin{align}
  \tl{g_m_ptr_k} &=
  (-1)^k \frac{[m+1]_A}{[m-k+1]_A} \cdot \tl{g_m-k} \nonumber \\*
  &\hspace*{\parindent} - (-1)^k \frac{\{ r \}'_A [k]_A}{[2r-m+k-1]_A [m-k+1]_A} \cdot \tl{h_m-k} \label{E:g_ptr_k_a} \\
   \tl{h_m_ptr_k} &=
   (-1)^k \frac{[2r-m-1]_A}{[2r-m+k-1]_A} \cdot \tl{h_m-k} \label{E:h_ptr_k_a}
 \end{align}
 while if $m-r < k \leqs m$ we have
 \begin{align}
  \tl{g_m_ptr_k} &= (-1)^k \frac{[ r ]_A \{ m-r+1 \}'_A}{[ m-k+1 ]_A} \cdot \tl{f_m-k} \label{E:g_ptr_k_b} \\
  \tl{h_m_ptr_k} &=
  (-1)^k \frac{[r]_A [2r-m-1]_A}{[m-k+1]_A} \cdot \tl{f_m-k} \label{E:h_ptr_k_b}
 \end{align}
\end{lemma}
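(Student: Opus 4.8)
The plan is to reduce all four formulas to the partial-trace identity \eqref{E:f_ptr_k} for simple Jones--Wenzl idempotents, together with the defining diagram \eqref{E:h_def} of $h_m$ and the decomposition \eqref{E:g_def} of $g_m$. Write $\ptr_k$ for the partial-trace operation appearing in \eqref{E:f_ptr_k}, so that $\ptr_k(f_m) = (-1)^k \frac{[m+1]_A}{[m-k+1]_A} f_{m-k}$ for $0 \leqs k \leqs m$.

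\textbf{Partial traces of $h_m$.} I would first handle the range $0 \leqs k \leqs m-r$. Here the $k$ strands being closed off run through the ``bulk'' of the diagram in \eqref{E:h_def}, away from the distinguished region around position $r-1$ occupied by the internal copy of $f_{r-1}$; using the absorption relations \eqref{E:f_f}, \eqref{E:f_h} and \eqref{E:f_ptr_k} one checks that the closed diagram in \eqref{E:h_def} is simply reproduced with $m$ replaced by $m-k$, so that the only effect of $\ptr_k$ is to turn the prefactor $(-1)^m[2r-m-1]_A$ into $(-1)^{m-k}[2r-m+k-1]_A$; this gives \eqref{E:h_ptr_k_a}. (Equivalently, one verifies the case $k=1$ directly and iterates, noting that $\ptr_1$ keeps us in the same range.) For the range $m-r < k \leqs m$, I would apply \eqref{E:h_ptr_k_a} with $k=m-r$ to reduce to $\ptr_{k-m+r}(h_r)$; then the first closure $\ptr_1(h_r)$ forces the turnback in \eqref{E:h_def} to open, and after \eqref{E:f_cups_caps} and \eqref{E:f_ptr_k} the whole diagram degenerates to $-[r-1]_A f_{r-1}$; applying \eqref{E:f_ptr_k} to the remaining strands and tidying up the sign yields \eqref{E:h_ptr_k_b}.

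\textbf{Partial traces of $g_m$.} These now follow from $g_m = f_m + [r]_A^{-1}h_m$, linearity of $\ptr_k$, \eqref{E:f_ptr_k} and the two cases just established. For $0 \leqs k \leqs m-r$ we have $m-k \geqs r$, so I would substitute $f_{m-k} = g_{m-k} - [r]_A^{-1}h_{m-k}$ into $\ptr_k(f_m)$ and add $[r]_A^{-1}\ptr_k(h_m)$: the coefficient of $g_{m-k}$ is exactly the one in \eqref{E:g_ptr_k_a}, and the coefficient of $h_{m-k}$ matches once one checks the $q$-number identity
\[
 [2r-m-1]_A[m-k+1]_A - [m+1]_A[2r-m+k-1]_A = -[r]_A\{r\}'_A[k]_A .
\]
This is elementary: expanding both products over $\{1\}_A^2$, two of the eight resulting monomials cancel (since $\{n\}'_A$ depends only on $|n|$), leaving $\{1\}_A^{-2}\bigl(\{2r-k\}'_A - \{2r+k\}'_A\bigr) = -\{1\}_A^{-2}\{2r\}_A\{k\}_A$, which equals $-[r]_A\{r\}'_A[k]_A$ via $\{2r\}_A = \{r\}_A\{r\}'_A$. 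For $m-r < k \leqs m$ we have $m-k < r$, so both $\ptr_k(f_m)$ and $[r]_A^{-1}\ptr_k(h_m)$ are multiples of $f_{m-k}$, and their sum is $(-1)^k\frac{[m+1]_A+[2r-m-1]_A}{[m-k+1]_A}f_{m-k}$; the identity $[m+1]_A+[2r-m-1]_A = [r]_A\{m-r+1\}'_A$ (which is $\{x+y\}_A+\{x-y\}_A = \{x\}_A\{y\}'_A$ with $x=r$, $y=m-r+1$) then gives \eqref{E:g_ptr_k_b}.

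I expect the main obstacle to be the diagrammatic content of the first part: making precise how the $f_{r-1}$-decorated turnback inside $h_m$ reacts to closing strands, and in particular why closures with $k \leqs m-r$ leave it intact (altering only a global scalar) while the closure at $k=m-r+1$ makes it collapse to an ordinary Jones--Wenzl idempotent. After those two base computations, everything reduces to bookkeeping of signs and the two $q$-integer identities above.
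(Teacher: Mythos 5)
Your proposal is correct and follows essentially the same route as the paper: both ranges of the $g_m$ formulas are obtained from the corresponding $h_m$ partial traces via $g_m = f_m + [r]_A^{-1}h_m$, equation~\eqref{E:f_ptr_k}, and exactly the two $q$-integer identities you state (which are the ones the paper uses, in cross-multiplied form). The only cosmetic difference is that the paper deduces \eqref{E:h_ptr_k_b} directly from \eqref{E:f_ptr_k} applied inside the diagram of \eqref{E:h_def}, whereas you first reduce to $h_r$ via \eqref{E:h_ptr_k_a}; both are fine.
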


\begin{proof}
 Equation~\eqref{E:h_ptr_k_a} follows directly from equation~\eqref{E:h_def}, and this implies equation~\eqref{E:g_ptr_k_a} using equation~\eqref{E:f_ptr_k} together with the equality
 \[
  \frac{\{ r \}'_A[k]_A}{[2r-m+k-1]_A[m-k+1]_A} = \frac{1}{[r]_A} \left(\frac{[m+1]_A}{[m-k+1]_A} - \frac{[2r-m-1]_A}{[2r-m+k-1]_A} \right).
 \]
 Similarly, equation~\eqref{E:h_ptr_k_b} follows directly from equation~\eqref{E:f_ptr_k}, and this implies equation~\eqref{E:g_ptr_k_b} using equation~\eqref{E:f_ptr_k} together with the equality
 \[
  [r]_A \{ m-r+1 \}'_A = [m+1]_A + [2r-m-1]_A. \qedhere
 \]
\end{proof}

\begin{lemma}\label{L:recurrence_g_h}
 The idempotent $g_m$ satisfies
 \begin{align}
  \tl{g_r} &= \tl{g_r_a} \label{E:g_r} \\
   \tl{g_r+1} &= \tl{g_r+1_a} + \frac{[r-1]_A}{[r+1]_A} \left( \tl{g_r+1_c} + \tl{g_r+1_d} \right) \nonumber \\*
   &\hspace*{\parindent} + \frac{[r-1]_A [2]_A}{[r+1]_A} \cdot \tl{g_r+1_e} + \frac{[r]_A}{[r+1]_A} \cdot \tl{g_r+1_b} \label{E:g_r+1} \\
   \tl{g_m} &= \tl{g_m_a} + \frac{[m-1]_A}{[m]_A} \cdot \hspace*{-5pt} \tl{g_m_b} + \frac{\{ r \}'_A}{[2r-m]_A [m]_A} \cdot \tl{g_m_c} \hspace*{-5pt} \label{E:g_m}
 \end{align}
 for every integer $r+2 \leqs m \leqs 2r-2$.
\end{lemma}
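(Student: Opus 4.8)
The plan is to prove the three identities by one diagrammatic computation, run separately for $m=r$, $m=r+1$, and $r+2\le m\le 2r-2$. Everything is done over $\C(A)$, where all the quantum integers occurring are invertible and $f_m,g_m,h_m$ are genuine elements of $\TL_A(m)$; the claimed formulas are identities of $\C(A)$-morphisms, and their regularity at $A=q^{\frac{r+1}{2}}$ is only noted afterwards. This matters, because several coefficients -- for instance $[r]_A/[r+1]_A$ in \eqref{E:g_r+1} and $\{r\}'_A/([2r-m]_A[m]_A)$ in \eqref{E:g_m} -- degenerate at the root of unity, so the bookkeeping must precede any specialization. The first step is to unfold $g_m=f_m+\tfrac1{[r]_A}h_m$ and $g_{m-1}=f_{m-1}+\tfrac1{[r]_A}h_{m-1}$ and substitute the Wenzl recursion \eqref{E:f_def} for the $f$-parts; each claimed identity then becomes equivalent to a recursion for the closed diagram $h_m$ of \eqref{E:h_def} in terms of $h_{m-1}$ (and, in the base case, to a closed form for $h_r$).

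To get the $h$-recursion I would add one strand to the diagram defining $h_{m-1}$ and resolve the cup-cap that this forces through its $f_{r-1}$-box. By \eqref{E:f_cups_caps} and \eqref{E:h_cups_caps}, every resulting term with a cup or cap at a position $j\ne r-1$ dies, and by \eqref{E:f_f}--\eqref{E:h_h} the others collapse; what survives is $h_{m-1}\otimes\id_1$, a cup-cap term between the last two strands, and "partial closure" terms in which the new strand caps back onto $h_{m-1}$ or onto the $f_{m-1}$-part. The latter are exactly where Lemma~\ref{L:g_h_ptr_k} and \eqref{E:f_ptr_k} enter: $\ptr_1$ of $h_{m-1}$, $g_{m-1}$, $f_{m-1}$ is a prescribed scalar times $h_{m-2}$, $g_{m-2}$, $f_{m-2}$, and the normalization $(-1)^m[2r-m-1]_A$ in \eqref{E:h_def} is what makes the accumulated coefficients telescope into the quantum-integer ratios in the statement. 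Packaging the residual $f_{m-1}$'s and $h_{m-1}$'s back into $g_{m-1}$-boxes via $h_{m-1}=[r]_A(g_{m-1}-f_{m-1})$ and the absorptions \eqref{E:f_g}--\eqref{E:g_g} then yields the identities. The base case \eqref{E:g_r}, which has no smaller $g$ to recurse from, reduces to checking $g_r=f_{r-1}\otimes\id_1$ directly: by \eqref{E:f_def} this amounts to matching $-[r-1]_A$ times the cup-cap diagram of \eqref{E:f_def} at $m=r$ with the closed diagram of \eqref{E:h_def} at $m=r$, an immediate comparison of pictures.

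The split between \eqref{E:g_r+1} and the uniform formula \eqref{E:g_m} is forced by the case distinction in Lemma~\ref{L:g_h_ptr_k}. For $m\ge r+2$, closing one strand off of $g_{m-1}$ produces $\ptr_1 g_{m-1}$ with $k=1$ in the range $0\le k\le m-r$, governed by \eqref{E:g_ptr_k_a}--\eqref{E:h_ptr_k_a}, returning a combination of $g_{m-2}$ and $h_{m-2}$; this yields the three-term shape of \eqref{E:g_m}. For $m=r+1$, the same partial trace $\ptr_1 g_r$ instead lands in the range $m-r<k\le m$, governed by \eqref{E:g_ptr_k_b}, which collapses $g_r$ all the way down to a multiple of $f_{r-1}$ -- and it is precisely this collapse that generates the two extra diagrams in \eqref{E:g_r+1}, with coefficients $\tfrac{[r-1]_A}{[r+1]_A}$ and $\tfrac{[r-1]_A[2]_A}{[r+1]_A}$. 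I expect the main obstacle to be this middle bookkeeping: three structurally different terms -- an ordinary Wenzl cup-cap between the last two strands, a cup-cap running through the distinguished position $r-1$, and the $h$-shaped contribution -- have to be tracked at once, each with its quantum-integer coefficient, and all spurious contributions checked to cancel with the right signs, so that the telescoping lands exactly on the coefficients printed in \eqref{E:g_r+1} and \eqref{E:g_m}. The case $m=r+1$, where the newly added strand meets both the distinguished position and the boundary of the small diagram $h_r$ simultaneously, is the one most likely to need a separate hands-on check.
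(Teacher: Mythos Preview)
Your plan is essentially the same as the paper's, with only minor organizational differences. The paper also reduces everything to the decomposition $g_m=f_m+\tfrac{1}{[r]_A}h_m$, uses the Wenzl recursion \eqref{E:f_def} for the $f$-part, and then matches the $h$-contributions via the absorption identities \eqref{E:f_h}--\eqref{E:h_h} together with a partial-trace computation; the only difference is that the paper verifies the identity by starting from the right-hand side of \eqref{E:g_m} and simplifying down to $f_m+\tfrac{1}{[r]_A}h_m$, rather than deriving the recursion from the left. For $m=r$ and $m=r+1$ the paper simply iterates \eqref{E:f_def} once and twice respectively, which is what your base-case check and your observation about the case split in Lemma~\ref{L:g_h_ptr_k} amount to.
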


\begin{proof}
 The statement is proved by direct computation. For $m = r$, equation~\eqref{E:g_r} follows from the recurrence formula~\eqref{E:f_def} defining $f_r$. For $m = r+1$, equation~\eqref{E:g_r+1} follows from a double application of equation~\eqref{E:f_def}. For $r+1 < m \leqs 2r-2$, let us look at the right-hand side of equation~\eqref{E:g_m}. For what concerns the first term, equation~\eqref{E:g_def} yields
 \[
  \tl{g_m_a} = \tl{f_m_a} + \frac{1}{[r]_A} \cdot \tl{h_m_a}
 \]
 For what concerns the second term, equations~\eqref{E:f_h} and \eqref{E:g_h} yield
 \begin{align*}
  \tl{g_m_b} 
  &= \tl{f_m_b} \hspace*{-5pt} + \frac{1}{[r]_A} \cdot \hspace*{-5pt} \tl{f_h_ref} \hspace*{-5pt} + \frac{1}{[r]_A} \cdot \hspace*{-5pt} \tl{h_f_ref} \hspace*{-5pt} + \frac{1}{[r]_A^2} \cdot \hspace*{-5pt} \tl{h_m_b} \\*  
  &= \tl{f_m_b} \hspace*{-5pt} + \frac{1}{[r]_A^2} \cdot \hspace*{-5pt} \tl{h_m_b}
 \end{align*}
 For what concerns the third term, equations~\eqref{E:g_h} and \eqref{E:h_h} yield
 \[
  \tl{g_m_c} = \frac{1}{[r]_A} \cdot \tl{h_m_b}
 \]
 This means that, using the equality
 \[
  \frac{\{ r \}'_A}{[2r-m]_A[m]_A} = \frac{1}{[r]_A} \left(\frac{[2r-m+1]_A}{[2r-m]_A} - \frac{[m-1]_A}{[m]_A} \right),
 \]
 we can rewrite the right-hand side of equation~\eqref{E:g_m} as
 \[
  \tl{f_m_a} + \frac{[m-1]_A}{[m]_A} \cdot \hspace*{-5pt} \tl{f_m_b} + \frac{1}{[r]_A} \cdot \tl{h_m_a} + \frac{[2r-m+1]_A}{[r]_A^2 [2r-m]_A} \cdot \hspace*{-5pt} \tl{h_m_b}
 \]
 But now we have
 \begin{align*}
  \tl{h_m_b} 
  &= [2r-m]_A^2 \cdot \tl{h_ref_1} \\*
  &= (-1)^m \frac{[r]_A [2r-m]_A^2}{[2r-m+1]_A} \cdot \tl{h_ref_2} \\*
  &= \frac{[r]_A [2r-m]_A}{[2r-m+1]_A} \left( - \tl{h_m_a} + \tl{h_m} \right) 
 \end{align*}
 where the first equality follows from equation~\eqref{E:h_def}, the second equality follows from equation~\eqref{E:f_ptr_k}, and the third equality follows from equation~\eqref{E:f_def}. This means that the right-hand side of equation~\eqref{E:g_m} can be further rewritten as
 \[
  \tl{f_m_a} + \frac{[m-1]_A}{[m]_A} \cdot \hspace*{-5pt} \tl{f_m_b} + \frac{1}{[r]_A} \cdot \tl{h_m} = \tl{f_m} + \frac{1}{[r]_A} \cdot \tl{h_m}
 \]
 Now the statement follows from equation~\eqref{E:g_def}. 
\end{proof}

\begin{lemma}\label{L:f_2r-1}
 The idempotent $f_{2r-1}$ satisfies
 \begin{equation}
   \tl{f_2r-1} = \tl{f_2r-1_a} + \frac{[2r-2]_A}{[2r-1]_A} \cdot \tl{f_2r-1_b} + \frac{\{ r \}'_A}{[2r-1]_A} \cdot \tl{f_2r-1_c} \label{E:f_2r-1} 
 \end{equation}
\end{lemma}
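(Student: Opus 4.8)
The plan is to verify that the right-hand side of \eqref{E:f_2r-1}, which I denote by $P \in \TL_A(2r-1)$, satisfies the universal property characterizing $f_{2r-1}$: over $\C(A)$, the Jones--Wenzl idempotent $f_{2r-1}$ is the unique element of $\TL_A(2r-1)$ which is annihilated on the left by every cup--cap generator $e_1, \dots, e_{2r-2}$ and whose coefficient along the identity $(2r-1)$-tangle equals $1$. The normalization condition for $P$ is immediate: among the three diagrams on the right-hand side of \eqref{E:f_2r-1}, only the first one --- $f_{2r-2}$ placed on the first $2r-2$ strands together with a through-strand on the last position --- involves the identity $(2r-1)$-tangle at all, and it does so with coefficient $1$ since $f_{2r-2}$ has identity-coefficient $1$ (by \eqref{E:f_def}), whereas the remaining two diagrams factor through strictly fewer than $2r-1$ through-strands.

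It therefore remains to check $P e_i = 0$ for $1 \leqs i \leqs 2r-2$. For $1 \leqs i \leqs 2r-3$ this follows at once from \eqref{E:f_cups_caps}, because each diagram occurring in $P$ contains a copy of $f_{2r-2}$ on the first $2r-2$ strands through which $e_i$ must pass. The case $i = 2r-2$ --- and, if the third diagram of \eqref{E:f_2r-1} carries a turnback at position $r-1$ as in \eqref{E:g_m}, the case $i = r-1$ as well --- is the substantive part. Here one post-composes $e_i$ with each of the three diagrams, collapses the nested Jones--Wenzl idempotents produced by the new cup--cap by means of \eqref{E:f_f}, kills the spurious turnbacks by \eqref{E:f_cups_caps}, and evaluates the resulting partial closures of $f_{2r-2}$ using \eqref{E:f_ptr_k}. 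All three contributions reduce to scalar multiples of one and the same diagram, and their sum vanishes by elementary quantum-integer identities relating $[2r-1]_A$, $[2r-2]_A$ and $\{r\}'_A$, of exactly the same flavor as those appearing in the proofs of Lemma~\ref{L:g_h_ptr_k} and Lemma~\ref{L:recurrence_g_h}; these are checked directly from the definitions of $[k]_A$ and $\{k\}'_A$. Uniqueness then forces $P = f_{2r-1}$.

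I expect the main obstacle to be bookkeeping rather than conceptual: the three diagrams must be drawn precisely, the composition with $e_{2r-2}$ tracked through every nested idempotent, and the scalars coming from the various partial traces collated --- just as in the proof of \eqref{E:g_m} within Lemma~\ref{L:recurrence_g_h}. The one delicate point is the turnback $e_{r-1}$: no single diagram of \eqref{E:f_2r-1} is individually annihilated by $e_{r-1}$ (this is the phenomenon recorded by the exclusion $j \neq r-1$ in \eqref{E:g_cups_caps}), so this case cannot be disposed of term by term and must go through the explicit scalar computation above; it is precisely here that the coefficient $\{r\}'_A/[2r-1]_A$ --- rather than the naive Wenzl coefficient one would get by treating $2r-1$ as if it were at most $r-1$ --- is pinned down. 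An equivalent, more computational alternative, closer to the proof of Lemma~\ref{L:recurrence_g_h}, is to start from the two-term recursion \eqref{E:f_def} at level $m = 2r-1$ and rewrite its turnback term using a further application of \eqref{E:f_def} to the inner $f_{2r-2}$ together with \eqref{E:f_ptr_k}, until the three displayed diagrams and their coefficients emerge.
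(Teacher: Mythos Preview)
Your proposal rests on a misidentification of the diagrams in \eqref{E:f_2r-1}: the three terms on the right-hand side are built from the \emph{non-semisimple} idempotent $g_{2r-2}$ (and $h_{2r-2}$), not from $f_{2r-2}$. The formula \eqref{E:f_2r-1} is the exact structural analogue of \eqref{E:g_m} with $m=2r-1$; this is confirmed both by the paper's own proof, which opens by expanding each $g_{2r-2}$ via \eqref{E:g_def}, and by the later use in Lemma~\ref{L:functor}, where the specialization of \eqref{E:f_2r-1} reads $f_{2r-1} = g_{2r-2}\otimes f_1 - g'_{2r-3}$. Consequently your appeal to \eqref{E:f_cups_caps} for $1\leqs i\leqs 2r-3$ does not apply; the correct annihilation statements are \eqref{E:g_cups_caps} and \eqref{E:h_cups_caps}, which already exclude $i=r-1$. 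You seem to half-realize this when you later invoke \eqref{E:g_cups_caps}, but the write-up is internally inconsistent, and the toolbox you cite for the residual cases $i=r-1$ and $i=2r-2$ (namely \eqref{E:f_f}, \eqref{E:f_cups_caps}, \eqref{E:f_ptr_k}) should instead be \eqref{E:g_g}--\eqref{E:h_h} together with Lemma~\ref{L:g_h_ptr_k}.

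With that correction your primary strategy (verify the defining property of $f_{2r-1}$) is sound in principle, but the $i=r-1$ check is substantially messier than you suggest, since $g_{2r-2}e_{r-1}$ is genuinely nonzero and must be computed. The paper bypasses this entirely: it expands every $g_{2r-2}$ in the right-hand side as $f_{2r-2}+h_{2r-2}/[r]_A$ via \eqref{E:g_def}, kills the cross-terms with \eqref{E:f_h} and \eqref{E:g_h}, reduces the surviving $h$-piece using \eqref{E:h_def}, \eqref{E:f_ptr_k}, and \eqref{E:f_def}, and recovers the ordinary Wenzl recursion \eqref{E:f_def} at level $2r-1$. Your ``computational alternative'' gestures at this but, starting from \eqref{E:f_def} and iterating only \eqref{E:f_def}, can never produce $g_{2r-2}$-boxes; the passage between $f$'s and $g$'s requires \eqref{E:g_def}, which is precisely the step the paper makes explicit.
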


\begin{proof}
 We follow the proof of equation~\eqref{E:g_m} in Lemma~\ref{L:recurrence_g_h}, but this time we suppose $m = 2r-1$. Then, the same argument implies that the right-hand side of equation~\eqref{E:f_2r-1} can be rewritten as
 \[
  \tl{f_ref} + \frac{[2r-2]_A}{[2r-1]_A} \cdot \hspace*{-5pt} \tl{f_f_ref} + \frac{1}{[r]_A} \cdot \tl{h_ref} + \frac{[2]_A}{[r]_A^2} \cdot \hspace*{-5pt} \tl{h_h_ref}
 \]
 But now we have
 \begin{align*}
  \tl{h_h_ref} 
  &= \tl{h_ref_3} 
  = -\frac{[r]_A}{[2]_A} \cdot \hspace*{-15pt} \tl{h_ref_4} 
  = -\frac{[r]_A}{[2]_A} \cdot \tl{h_2r-1_a}
 \end{align*}
 where the first equality follows from equation~\eqref{E:h_def}, the second equality follows from equation~\eqref{E:f_ptr_k}, and the third equality follows from equation~\eqref{E:f_def}. Now the statement follows from equation~\eqref{E:f_def}. 
\end{proof}

From now on, we will focus on the specialization
\[
 \TL := \TL_{q^{\frac{r+1}{2}}}.
\]
We observe that $f_m$ can be specialized to $\TL$ only for $0 \leqs m \leqs r-1$, because the formula defining $f_r$ has a pole at $A = q^{\frac{r+1}{2}}$. On the other hand, thanks to Lemma~\ref{L:recurrence_g_h}, $g_m$ can be specialized to $\TL$ for all $r \leqs m \leqs 2r-2$, and similarly, thanks to Lemma~\ref{L:f_2r-1}, $f_{2r-1}$ can be specialized to $\TL$. We call $g_m$ the \textit{$m$th non-semisimple Jones--Wenzl idempotent}.

\begin{remark}\label{R:specialization}
 It can be proven by induction that in $\TL$ the idempotent $f_{r-1}$ satisfies
 \begin{equation}
  \tl{r_r-2_a} = - \tl{r_r-2_b} \qquad \qquad
  \tl{r-2_r_a} = - \tl{r-2_r_b} \label{E:f_ambi}
 \end{equation}
 In particular, in $\TL$ we have $h^*_m = h_m$ for every integer $r \leqs m \leqs 2r-2$, because a rotation of angle $\pi$ fixes $h_m$. Remark however that this is true only once we specialize $A$ to $q^{\frac{r+1}{2}}$, so the same property does not hold for $g_m$.
\end{remark}

\section{Monoidal linear functor}\label{S:monoidal_linear_functor}

In this section, we study a monoidal linear functor $F_{\TL} : \TL \to \mods{\bar{U}}$, compare with \cite{FK97}. By abuse of notation, we still denote by $\TL$ the idempotent completion of $\TL$. This means we promote idempotent endomorphisms $p_m \in \TL(m)$ to objects of $\TL$, and we set
\[
 \TL(p_m,p_{m'}) = \{ t \in \TL(m,m') \mid t p_m = t = p_{m'} t \},
\]
with the morphism $p_m$ being the identity of the object $p_m$. As a consequence, we will occasionally confuse direct summands of objects of $\mods{\bar{U}}$ with the corresponding idempotent endomorphisms. We also start adopting a graphical notation featuring labels given by integers $m \geqs 0$ placed next to endpoints of edges, standing for the number of parallel strands contained in the plane.

Let us consider the monoidal linear functor
\[ 
 F_{\TL} : \TL \to \mods{\bar{U}}
\]
sending the object $1 \in \TL$ to $X \in \mods{\bar{U}}$, and sending the morphism $\cup \in \TL(0,2)$ to $c \in \Hom_{\bar{U}}(\C,X \otimes X)$ defined by
\[
 c(1) := q a_0^1 \otimes a_1^1 - a_1^1 \otimes a_0^1,
\]
and the morphism $\cap \in \TL(2,0)$ to $e \in \Hom_{\bar{U}}(X \otimes X,\C)$ defined by
\[
 e(a_0^1 \otimes a_0^1) = 0, \quad 
 e(a_0^1 \otimes a_1^1) = -1, \quad
 e(a_1^1 \otimes a_0^1) = q^{-1}, \quad 
 e(a_1^1 \otimes a_1^1) = 0.
\]
In order to study $F_{\TL}$, let us define projection morphisms $p_m \in \TL(g_m,f_{2r-m-2})$ and injection morphisms $i_m \in \TL(f_{2r-m-2},g_m)$ as
\begin{equation}
 \tl{p_m} \hspace*{-7.5pt} := \tl{p_m_a} \qquad
 \tl{i_m} \hspace*{-7.5pt} := \tl{i_m_a} \hspace*{-7.5pt} \label{E:p_i_def}
\end{equation}
for every integer $r \leqs m \leqs 2r-2$, which immediately gives
\begin{equation}
 \tl{p_m} \hspace*{-7.5pt} = \tl{p_m_b} \qquad
 \tl{i_m} \hspace*{-7.5pt} = \tl{i_m_b} \label{E:p_i_m}
\end{equation}
The fact that $p_m g_m = p_m$ and that $g_m i_m = i_m$ follows from a direct computation.

\begin{lemma}\label{L:functor}
 For every integer $0 \leqs m \leqs r-1$ the object $f_m$ satisfies
 \begin{equation}
  F_{\TL}(f_m) = X_m, \label{E:F(f_m)} 
 \end{equation}
 for $r \leqs m \leqs 2r-2$ the object $g_m$ and the morphisms $h_m$, $p_m$, and $i_m$ satisfy
 \begin{align}
  F_{\TL}(g_m) &= P_m, \label{E:F(g_m)} \\*
  F_{\TL}(h_m) &= -\frac{1}{[m+1]} \varepsilon_m, \label{E:F(h_m)} \\*
  F_{\TL}(p_m) &= (-1)^m \frac{[m-r]!}{[m+1]} \pi_m, \label{E:F(p_m)} \\*
  F_{\TL}(i_m) &= \frac{1}{[m-r+1]!} \iota_m, \label{E:F(i_m)}
 \end{align}
 and for $m = 2r-1$ the object $f_{2r-1}$ satisfies
 \begin{equation}
  F_{\TL}(f_{2r-1}) = X_{2r-1}, \label{E:F(f_2r-1)} 
 \end{equation}
 where the morphisms $\varepsilon_m$, $\pi_m$, and $\iota_m$ are defined by equations~\eqref{E:epsilon_m_def} to \eqref{E:iota_m_def}.
\end{lemma}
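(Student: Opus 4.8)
The strategy is to verify all the stated identities by explicit computation on bases, using the recursive definitions already set up in Sections \ref{S:small_quantum_group} and \ref{S:TL_category}. Equation \eqref{E:F(f_m)} is classical: since $f_m$ is an idempotent with $f_{r-1}$ well-defined in $\TL$, one checks by induction on $m$ that $F_{\TL}(f_m)$ is the projection of $X^{\otimes m}$ onto the submodule $X_m$ fixed recursively by \eqref{E:X_m}; the inductive step uses the recursion \eqref{E:f_def} for $f_m$ together with \eqref{E:f_cups_caps}, matching the image of the highest weight vector $a_0^m = F^0 \cdot (a_0^{m-1} \otimes a_0^1)$ and then propagating by the $F$-action. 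The same bookkeeping, now in the semisimple range $m \leqs r-1$, is what makes $F_{\TL}$ a well-defined monoidal linear functor in the first place, so I would treat \eqref{E:F(f_m)} as essentially a sanity check carried over from \cite{FK97}.

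Next I would establish \eqref{E:F(g_m)} and \eqref{E:F(h_m)} together. By \eqref{E:g_def}, $g_m = f_m + \frac{1}{[r]_A} h_m$, so it suffices to identify $F_{\TL}(h_m)$ and check that $F_{\TL}(g_m)$ is an idempotent whose image is an indecomposable projective of the right dimension. Using the recursion \eqref{E:g_m} (and the base cases \eqref{E:g_r}, \eqref{E:g_r+1}) from Lemma \ref{L:recurrence_g_h}, I would argue by induction on $m$ that $F_{\TL}(g_m)$ projects $X^{\otimes m}$ onto the submodule $P_m$ defined recursively in \eqref{E:P_r}--\eqref{E:P_m}, tracking the four families of basis vectors $a_j^m, x_k^m, y_k^m, b_j^m$. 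The term $h_m$ in \eqref{E:g_m}, being a cap–cup through a lower $g$, contributes precisely the non-semisimple correction that distinguishes $P_m$ from $X_m \oplus X_{2r-m-2}$; concretely, $F_{\TL}(h_m)$ should be a rank-one nilpotent endomorphism of $P_m$, i.e.\ a scalar multiple of $\varepsilon_m$ from \eqref{E:epsilon_m_def}, and the scalar $-\frac{1}{[m+1]}$ is pinned down by evaluating $F_{\TL}(h_m)$ on $b_0^m$ and using the partial-trace identity \eqref{E:h_ptr_k_b} of Lemma \ref{L:g_h_ptr_k} together with the normalization in \eqref{E:h_def}.

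With $\varepsilon_m$, $\pi_m$, $\iota_m$ in hand, identities \eqref{E:F(p_m)} and \eqref{E:F(i_m)} follow by unwinding the diagrammatic definitions \eqref{E:p_i_def}: $p_m$ is $f_{2r-m-2}$ capped against $g_m$ through $m-r$ turnbacks, so $F_{\TL}(p_m)$ is the composite of $F_{\TL}(g_m)=P_m \twoheadrightarrow$ (image) with the corresponding iterated cap, which by \eqref{E:g_ptr_k_a}--\eqref{E:h_ptr_k_a} kills $a_j^m,x_k^m,y_k^m$ and sends $b_j^m$ to a scalar multiple of $a_j^{2r-m-2}$; comparing with \eqref{E:pi_m_def} gives the constant $(-1)^m\frac{[m-r]!}{[m+1]}$, and an analogous (dual) computation gives $\frac{1}{[m-r+1]!}$ for $i_m$. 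Finally \eqref{E:F(f_2r-1)} is handled exactly like \eqref{E:F(f_m)} but invoking Lemma \ref{L:f_2r-1} instead of \eqref{E:f_def}: the recursion \eqref{E:f_2r-1} specializes to $\TL$, its image is an idempotent of rank $2r$, and matching against \eqref{E:X_r-1_+_-} shows this image is $X_{r-1}^+ \oplus X_{r-1}^- = X_{2r-1}$. The main obstacle throughout is purely organizational: keeping the recursive bases of $P_m$ and the recursive diagrammatic formulas for $g_m$ aligned so that the inductive step closes, and correctly tracking the accumulating scalar factors (the various $[m+1]$, $[m-r]!$, and signs $(-1)^m$) — there is no conceptual difficulty, but the calculation for $F_{\TL}(g_m)$ is delicate because one must simultaneously verify idempotency, the module structure on all four basis families, and the precise value of the $h_m$-correction.
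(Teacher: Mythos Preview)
Your overall inductive strategy matches the paper's, but there is one genuine wobble and two organizational points worth correcting.

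First, the sentence ``By \eqref{E:g_def}, $g_m = f_m + \frac{1}{[r]_A} h_m$, so it suffices to identify $F_{\TL}(h_m)$\ldots'' is not a usable reduction: after specializing $A = q^{(r+1)/2}$ the idempotent $f_m$ does not exist for $m \geqs r$ (the recursion \eqref{E:f_def} has a pole) and $[r]=0$, so neither summand of \eqref{E:g_def} survives individually. The only tool you have in $\TL$ for $g_m$ is the recursion of Lemma~\ref{L:recurrence_g_h}, which you do cite immediately afterwards --- just drop the appeal to \eqref{E:g_def} entirely. Relatedly, the partial-trace identities \eqref{E:g_ptr_k_a}--\eqref{E:h_ptr_k_b} are purely diagrammatic statements in $\TL_A$; they do not by themselves tell you what $F_{\TL}(p_m)$ or $F_{\TL}(h_m)$ do to the vector $b_0^m$. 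That step requires an honest computation in $X^{\otimes m}$ using the recursive bases \eqref{E:P_r}--\eqref{E:P_m}.

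Second, the paper streamlines the induction considerably by observing that, since $F_{\TL}$ lands in intertwiners, it is enough to check each identity on a single generating vector: the highest-weight vector $a_0^m$ for simple modules, and the dominant vector $b_0^m$ for the projectives (together with the complementary generator ${b'}_0^{m-2}$ to see the kernel). You propose to track all four basis families $a_j^m, x_k^m, y_k^m, b_j^m$, which works but is redundant.

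Third, the paper reverses your order for \eqref{E:F(h_m)}: it first computes $F_{\TL}(p_m)$ and $F_{\TL}(i_m)$ on $b_0^m$ and $a_0^{2r-m-2}$ respectively, and then obtains \eqref{E:F(h_m)} for free from $h_m = i_m p_m$ (up to the constant in \eqref{E:h_def}) and $\varepsilon_m = \iota_m \pi_m$ (equation~\eqref{E:epsilon_m_rel_1}). This avoids having to evaluate $h_m$ directly. A minor correction: $\varepsilon_m$ is nilpotent but not rank-one --- its rank is $2r-m-1$, the dimension of the socle of $P_m$.
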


The proof of Lemma~\ref{L:functor} is a lengthy computation which will occupy the remainder of this section.

\begin{proof}
 In order to discuss the proof, we first need to fix some notation. Let us denote with $Y_m$ the unique submodule of $X^{\otimes m}$ satisfying 
 \begin{equation}
  X^{\otimes m} \cong X_m \oplus Y_m \label{E:decomposition_simple}
 \end{equation}
 for every integer $0 \leqs m \leqs r-1$, and let us denote with $\varphi_m \in \End_{\bar{U}}(X^{\otimes m})$ the idempotent endomorphism with image $X_m$ and kernel $Y_m$. Next, let us denote with $Q_m$ the unique submodule of $X^{\otimes m}$ satisfying 
 \begin{equation}
  X^{\otimes m} \cong P_m \oplus Q_m \label{E:decomposition_projective}
 \end{equation}
 for every integer $r \leqs m \leqs 2r-2$, and let us denote with $\psi_m \in \End_{\bar{U}}(X^{\otimes m})$ the idempotent endomorphism with image $P_m$ and kernel $Q_m$. Similarly, let us denote with $Y_{2r-1}$ the unique submodule of $X^{\otimes 2r-1}$ satisfying 
 \begin{equation}
  X^{\otimes 2r-1} \cong X_{r-1}^+ \oplus X_{r-1}^- \oplus Y_{2r-1}, \label{E:decomposition_2r-1}
 \end{equation}
 and for every $\varepsilon \in \{ +,- \}$ let us denote with $\varphi_{2r-1}^\varepsilon \in \End_{\bar{U}}(X^{\otimes 2r-1})$ the idempotent endomorphism with image $X_{2r-1}^\varepsilon$ and kernel $X_{r-1}^{\overline{\varepsilon}} \oplus Y_{2r-1}$, where $\overline{\varepsilon} \in \{ +,- \}$ denotes the opposite of $\varepsilon$. 
 
 The strategy will be to prove by induction on $0 \leqs m \leqs 2r-1$ that $F_{\TL}(f_m) = \varphi_m$, if $0 \leqs m \leqs r-1$, that $F_{\TL}(g_m) = \psi_m$, if $r \leqs m \leqs 2r-2$, and finally that $F_{\TL}(f_{2r-1}) = \varphi_{2r-1}^+ + \varphi_{2r-1}^-$. This is precisely what equations~\eqref{E:F(f_m)}, \eqref{E:F(g_m)}, and \eqref{E:F(f_2r-1)} stand for. This will allow us to establish equations~\eqref{E:F(p_m)} and \eqref{E:F(i_m)}, which immediately imply equation~\eqref{E:F(h_m)}, by restriction to the submodule $P_m$ of $X^{\otimes m}$. An important observation is that, since the functor $F_{\TL}$ sends all morphisms if $\TL$ to intertwiners, it is sufficient to check these equalities only for highest-weight vectors, in the case of simple $\bar{U}$-modules, and dominant vectors, in the case of indecomposable projective $\bar{U}$-modules.
 
 For $m = 0$ and for $m = 1$, equation~\eqref{E:F(f_m)} follows directly from the definitions. For $1 < m \leqs r-1$, specializing equation~\eqref{E:f_def} at $A = q^{\frac{r+1}{2}}$ gives
 \[
  f_m = f_{m-1} \otimes f_1 - f'_{m-2},
 \]
 where
 \begin{equation}
  \fusion{f_prime_m-2} := -\frac{[m-1]}{[m]} \cdot \fusion{tilde_f_m-2} \label{E:f_prime_m-2}
 \end{equation}
 We know $F_{\TL}(f_{m-1} \otimes f_1)$ restricts to the identity on $X_{m-1} \otimes X_1$ and to zero on $Y_{m-1} \otimes X_1$ thanks to the induction hypothesis. Then, using equation~\eqref{E:X_m}, we have
 \[
   a_0^m = a_0^{m-1} \otimes a_0^1 = a_0^{m-2} \otimes a_0^1 \otimes a_0^1.
 \]
 This means that, using the definition of $F_\TL$ and equation~\eqref{E:f_prime_m-2}, we get
 \begin{align*}
  F_{\TL}(f'_{m-2})(a_0^m) &= 0, & F_{\TL}(f_m)(a_0^m) &= a_0^m.
 \end{align*}
 On the other hand, using equation~\eqref{E:tilde_X_m-2}, we have
 \begin{align*}
  \begin{split}
   {a'}_0^{m-2} &= [m-1]q a_0^{m-1} \otimes a_1^1 - a_1^{m-1} \otimes a_0^1 \\
   &= [m-1]q a_0^{m-2} \otimes a_0^1 \otimes a_1^1 - q^{-m+2} a_0^{m-2} \otimes a_1^1 \otimes a_0^1 - a_1^{m-2} \otimes a_0^1 \otimes a_0^1,
  \end{split}
 \end{align*}
 and, using equations~\eqref{E:X_m} and \eqref{E:tilde_X_m-2}, we have
 \begin{align*}
  a_0^{m-2} \otimes a_0^1 &= a_0^{m-1}, &
  a_0^{m-2} \otimes a_1^1 &= \frac{1}{[m-1]} a_1^{m-1} + \frac{1}{[m-1]} {a'}_0^{m-3}.
 \end{align*}
 This means that, using the definition of $F_\TL$ and equation~\eqref{E:f_prime_m-2}, we get
 \begin{align*}
  F_{\TL}(f'_{m-2})({a'}_0^{m-2}) &= {a'}_0^{m-2}, &
  F_{\TL}(f_m)({a'}_0^{m-2}) &= 0.
 \end{align*}
 Therefore, $F_{\TL}(f_m) = \varphi_m$. This proves equation~\eqref{E:F(f_m)}. For $m = r$, equation~\eqref{E:g_r} gives
 \[
  g_r = f_{r-1} \otimes f_1.
 \]
 We know $F_{\TL}(f_{r-1} \otimes f_1)$ restricts to the identity on $X_{r-1} \otimes X_1$ and to zero on $Y_{r-1} \otimes X_1$ thanks to equation~\eqref{E:F(f_m)}, and so $F_{\TL}(g_r) = \psi_r$. This proves equation~\eqref{E:F(g_m)}. For what concerns equation~\eqref{E:F(p_m)}, using equation~\eqref{E:P_r}, we have
 \[
  b_0^r = a_0^{r-1} \otimes a_1^1 = a_0^{r-2} \otimes a_0^1 \otimes a_1^1.
 \]
 Then, using the definition of $F_\TL$ and equation~\eqref{E:p_i_m}, this means
 \[
  F_{\TL}(p_r)(b_0^r) = -a_0^{r-2}.
 \]
 For what concerns equation~\eqref{E:F(i_m)}, using equations~\eqref{E:X_m} and \eqref{E:tilde_X_m-2}, we have
 \begin{align*}
  a_0^{r-2} \otimes a_0^1 &= a_0^{r-1}, &
  a_0^{r-2} \otimes a_1^1 &= - a_1^{r-1} - {a'}_0^{r-3},
 \end{align*}
 and, using equation~\eqref{E:P_r}, we have
 \[
  a_0^r = q a_0^{r-1} \otimes a_1^1 + a_1^{r-1} \otimes a_0^1.
 \]
 Then, using the definition of $F_\TL$ and equation~\eqref{E:p_i_m}, this means
 \[
  F_{\TL}(i_r)(a_0^{r-2}) = a_0^r.
 \]
 Now equation~\eqref{E:F(h_m)} follows from equations~\eqref{E:F(p_m)} and \eqref{E:F(i_m)}. For $m = r+1$, specializing equation~\eqref{E:g_r+1} at $A = q^{\frac{r+1}{2}}$ gives
 \[
  g_{r+1} = f_{r-1} \otimes f_1 \otimes f_1 - {f'}_{r-1}^\up - {f'}_{r-1}^\down
 \]
 where
 \begin{equation}
  \fusion{f_prime_r-1_+} := \fusion{tilde_f_r-1_+} \qquad \qquad
  \fusion{f_prime_r-1_-} := \fusion{tilde_f_r-1_-_a} + [2] \cdot \fusion{tilde_f_r-1_-_b} \label{E:f_prime_r-1}
 \end{equation}
 We know $F_{\TL}(f_{r-1} \otimes f_1 \otimes f_1)$ restricts to the identity on $X_{r-1} \otimes X_1 \otimes X_1$ and to zero on $Y_{r-1} \otimes X_1 \otimes X_1$. Then, using equation~\eqref{E:P_m}, we have
 \begin{align*}
  \begin{split}
   b_0^{r+1} &= \left( \frac{1}{[2]} + q \right) a_0^{r-1} \otimes a_1^1 \otimes a_1^1 + \frac{q^{-1}}{[2]} a_1^{r-1} \otimes a_0^1 \otimes a_1^1 + \frac{1}{[2]} a_1^{r-1} \otimes a_1^1 \otimes a_0^1 \\
   &= \left( \frac{1}{[2]} + q \right) a_0^{r-2} \otimes a_0^1 \otimes a_1^1 \otimes a_1^1 + \frac{q}{[2]} a_0^{r-2} \otimes a_1^1 \otimes a_0^1 \otimes a_1^1 \\
   &\hspace*{\parindent} + \frac{q^2}{[2]} a_0^{r-2} \otimes a_1^1 \otimes a_1^1 \otimes a_0^1 + \frac{q^{-1}}{[2]} a_1^{r-2} \otimes a_0^1 \otimes a_0^1 \otimes a_1^1 \\
   &\hspace*{\parindent} + \frac{1}{[2]} a_1^{r-2} \otimes a_0^1 \otimes a_1^1 \otimes a_0^1,
  \end{split}
 \end{align*}
 and, using equation~\eqref{E:tilde_X_m-2}, we have
 \begin{align*}
  a_0^{r-2} \otimes a_1^1 &= - a_1^{r-1} - {a'}_0^{r-3}, &
  a_1^{r-2} \otimes a_0^1 &= [2]q a_1^{r-1} + q^2 {a'}_0^{r-3}.
 \end{align*}  
 This means that, using the definition of $F_\TL$ and equation~\eqref{E:f_prime_r-1}, we get
 \begin{align*}
  F_{\TL}({f'}_{r-1}^\up)(b_0^{r+1}) &= 0, & 
  F_{\TL}({f'}_{r-1}^\down)(b_0^{r+1}) &= 0, & 
  F_{\TL}(g_{r+1})(b_0^{r+1}) &= b_0^{r+1}.
 \end{align*}
 On the other hand, using equation~\eqref{E:tilde_X_r-1_+_-}, we have
 \begin{align*}
  \begin{split}
   {a'}_0^{r-1,\up} &= q a_0^{r-1} \otimes a_0^1 \otimes a_1^1 - a_0^{r-1} \otimes a_1^1 \otimes a_0^1 \\
   &= q a_0^{r-2} \otimes a_0^1 \otimes a_0^1 \otimes a_1^1 - a_0^{r-2} \otimes a_0^1 \otimes a_1^1 \otimes a_0^1,
  \end{split} \\
  \begin{split}
   {a'}_0^{r-1,\down} &= q a_0^{r-1} \otimes a_1^1 \otimes a_0^1 + a_1^{r-1} \otimes a_0^1 \otimes a_0^1 \\
   &= q a_0^{r-2} \otimes a_0^1 \otimes a_1^1 \otimes a_0^1 + q^2 a_0^{r-2} \otimes a_1^1 \otimes a_0^1 \otimes a_0^1 + a_1^{r-2} \otimes a_0^1 \otimes a_0^1 \otimes a_0^1.
  \end{split}  
 \end{align*}
 This means that, using the definition of $F_\TL$ and equation~\eqref{E:f_prime_r-1}, we get
 \begin{align*}
  F_{\TL}({f'}_{r-1}^\up)({a'}_0^{r-1,\up}) &= {a'}_0^{r-1,\up}, &
  F_{\TL}({f'}_{r-1}^\up)({a'}_0^{r-1,\down}) &= 0, \\*
  F_{\TL}({f'}_{r-1}^\down)({a'}_0^{r-1,\up}) &= 0, &
  F_{\TL}({f'}_{r-1}^\down)({a'}_0^{r-1,\down}) &= {a'}_0^{r-1,\down}, \\*
  F_{\TL}(g_{r+1})({a'}_0^{r-1,\up}) &= 0, &
  F_{\TL}(g_{r+1})({a'}_0^{r-1,\down}) &= 0.
 \end{align*}
 Therefore, $F_{\TL}(g_{r+1}) = \psi_{r+1}$. This proves equation~\eqref{E:F(g_m)}. For what concerns equation~\eqref{E:F(p_m)}, using equation~\eqref{E:P_m}, we have
 \[
  b_0^{r+1} = \frac{q^{-1}}{[2]} a_0^r \otimes a_1^1 + q b_0^r \otimes a_1^1 + \frac{1}{[2]} b_1^r \otimes a_0^1,
 \]
 and, using equation~\eqref{E:X_m}, we have
 \begin{align*}
  a_0^{r-2} &= a_0^{r-3} \otimes a_0^1, &
  a_1^{r-2} &= q^3 a_0^{r-3} \otimes a_1^1 + a_1^{r-3} \otimes a_0^1.
 \end{align*}
 Then, using the definition of $F_\TL$ and equation~\eqref{E:p_i_m}, this means
 \[
  F_{\TL}(p_{r+1})(b_0^{r+1}) = \frac{1}{[2]} a_0^{r-3}.
 \]
 For what concerns equation~\eqref{E:F(i_m)}, using equations~\eqref{E:X_m} and \eqref{E:tilde_X_m-2}, we have
 \begin{align*}
  a_0^{r-3} \otimes a_0^1 &= a_0^{r-2}, &
  a_0^{r-3} \otimes a_1^1 &= -\frac{1}{[2]} a_1^{r-2} -\frac{1}{[2]} {a'}_0^{r-4},
 \end{align*}
 and, using equation~\eqref{E:P_m}, we have
 \[
  a_0^{r+1} = [2]q a_0^r \otimes a_1^1 + a_1^r \otimes a_0^1.
 \]
 Then, using the definition of $F_\TL$ and equation~\eqref{E:p_i_m}, this means
 \[
  F_{\TL}(i_{r+1})(a_0^{r-3}) = \frac{1}{[2]} a_0^{r+1}.
 \]
 Now equation~\eqref{E:F(h_m)} follows from equations~\eqref{E:F(p_m)} and \eqref{E:F(i_m)}. For $r+1 < m \leqs 2r-2$, specializing equation~\eqref{E:g_m} at $A = q^{\frac{r+1}{2}}$ gives
 \[
  g_m = g_{m-1} \otimes f_1 - g'_{m-2}
 \]
 where
 \begin{equation}
  \fusion{g_prime_m-2} := -\frac{[m-1]}{[m]} \cdot \fusion{tilde_g_m-2_a} + \frac{2}{[m]^2} \cdot \fusion{tilde_g_m-2_b} \label{E:g_prime_m-2}
 \end{equation}
 We know $F_{\TL}(g_{m-1} \otimes f_1)$ restricts to the identity on $P_{m-1} \otimes X_1$ and to zero on $Q_{m-1} \otimes X_1$ thanks to the induction hypothesis. Then, using equation~\eqref{E:P_m}, we have
 \begin{align*}
  \begin{split}
   b_0^m &= \frac{q^{-m}}{[m+1]} a_0^{m-1} \otimes a_1^1 + [m]q b_0^{m-1} \otimes a_1^1 + \frac{[m]}{[m+1]} b_1^{m-1} \otimes a_0^1 \\
   &= \left( \frac{[m] q^{-m+1}}{[m+1]} + q^{-m+2} \right) a_0^{m-2} \otimes a_1^1 \otimes a_1^1 + [m][m-1]q^2 b_0^{m-2} \otimes a_1^1 \otimes a_1^1 \\
   &\hspace*{\parindent} + \frac{q^{-m}}{[m+1]} a_1^{m-2} \otimes a_0^1 \otimes a_1^1 + [m-1]q b_1^{m-2} \otimes a_0^1 \otimes a_1^1 \\
   &\hspace*{\parindent} + \frac{q^{-m+1}}{[m+1]} a_1^{m-2} \otimes a_1^1 \otimes a_0^1 + [m-1]q^2 b_1^{m-2} \otimes a_1^1 \otimes a_0^1 \\
   &\hspace*{\parindent} + \frac{[m-1]}{[m+1]} b_2^{m-2} \otimes a_0^1 \otimes a_0^1,
  \end{split}
 \end{align*}
 This means that, using the definition of $F_\TL$ and equation~\eqref{E:g_prime_m-2}, we get
 \begin{align*}
  F_{\TL}(g'_{m-2})(b_0^m) &= 0, & F_{\TL}(g_m)(b_0^m) &= b_0^m.
 \end{align*}
 On the other hand, using equation~\eqref{E:tilde_P_m-2}, we have
 \begin{align*}
  \begin{split}
   {b'}_0^{m-2} &= \frac{q^m}{[m-1]} x_{m-r-1}^{m-1} \otimes a_1^1 - \frac{[m]}{[m-1]} b_0^{m-1} \otimes a_0^1 \\
   &= q^m x_{m-r-2}^{m-2} \otimes a_1^1 \otimes a_1^1 + \frac{q^m}{[m-1]} a_0^{m-2} \otimes a_0^1 \otimes a_1^1 \\
   &\hspace*{\parindent} - \frac{q^{-m+1}}{[m-1]} a_0^{m-2} \otimes a_1^1 \otimes a_0^1 - [m] q b_0^{m-2} \otimes a_1^1 \otimes a_0^1 \\
   &\hspace*{\parindent} - b_1^{m-2} \otimes a_0^1 \otimes a_0^1,
  \end{split}
 \end{align*}
 and, using equations~\eqref{E:P_m} and \eqref{E:tilde_P_m-2}, we have
 \begin{align*}
  a_0^{m-2} \otimes a_0^1 &= - {a'}_0^{m-3}, \\*
  b_0^{m-2} \otimes a_0^1 &= \frac{q^{m-1}}{[m-1]^2} x_{m-r-1}^{m-1} + \frac{q^{m-1}}{[m-1]^2} {a'}_0^{m-3} - \frac{[m-2]}{[m-1]} {b'}_0^{m-3}, \\*
  a_0^{m-2} \otimes a_1^1 &= \frac{1}{[m-1]} a_0^{m-1} + \frac{1}{[m-1]} {a'}_1^{m-3}, \\*
  b_0^{m-2} \otimes a_1^1 &= - \frac{\{ m-1 \}'}{[m-1]^3} a_0^{m-1} + \frac{[m]}{[m-1]^2} b_0^{m-1} - \frac{\{ m-1 \}'}{[m-1]^3} {a'}_1^{m-3} \\*
  &\hspace*{\parindent} + \frac{[m-2]}{[m-1]^2} {b'}_1^{m-3}.
 \end{align*}
 This means that, using the definition of $F_\TL$ and equation~\eqref{E:g_prime_m-2}, we get
 \begin{align*}
  F_{\TL}(g'_{m-2})({b'}_0^{m-2}) &= {b'}_0^{m-2}, &
  F_{\TL}(g_m)({b'}_0^{m-2}) &= 0.
 \end{align*}
 Therefore, $F_{\TL}(g_m) = \psi_m$. This proves equation~\eqref{E:F(g_m)}. For what concerns equation~\eqref{E:F(p_m)}, using equation~\eqref{E:P_m}, we have
 \[
  b_0^m = \frac{q^{-m}}{[m+1]} a_0^{m-1} \otimes a_1^1 + [m]q b_0^{m-1} \otimes a_1^1 + \frac{[m]}{[m+1]} b_1^{m-1} \otimes a_0^1,
 \]
 and, using equation~\eqref{E:X_m}, we have
 \begin{align*}
  a_0^{2r-m-1} &= a_0^{2r-m-2} \otimes a_0^1, &
  a_1^{2r-m-1} &= q^{m+2} a_0^{2r-m-2} \otimes a_1^1 + a_1^{2r-m-2} \otimes a_0^1.
 \end{align*}
 Then, using the definition of $F_\TL$ and equation~\eqref{E:p_i_m}, this means
 \[
  F_{\TL}(p_m)(b_0^m) = (-1)^m \frac{[m-r]!}{[m+1]} a_0^{2r-m-2}.
 \]
 For what concerns equation~\eqref{E:F(i_m)}, using equations~\eqref{E:X_m} and \eqref{E:tilde_X_m-2}, we have
 \begin{align*}
  a_0^{2r-m-2} \otimes a_0^1 &= a_0^{2r-m-1}, \\*
  a_0^{2r-m-2} \otimes a_1^1 &= -\frac{1}{[m+1]} a_1^{2r-m-1} - \frac{1}{[m+1]} {a'}_0^{2r-m-3},
 \end{align*}
 and, using equation~\eqref{E:P_m}, we have
 \[
  a_0^m = [m+1] q a_0^{m-1} \otimes a_1^1 + a_1^{m-1} \otimes a_0^1.
 \]
 Then, using the definition of $F_\TL$ and equation~\eqref{E:p_i_m}, this means
 \[
  F_{\TL}(i_m)(a_0^{2r-m-2}) = \frac{1}{[m-r+1]!} a_0^m.
 \]
 Now equation~\eqref{E:F(h_m)} is a consequence of equations~\eqref{E:F(p_m)} and \eqref{E:F(i_m)}. Next, let us denote with $Y_{2r-1}$ the unique submodule of $X^{\otimes 2r-1}$ satisfying $X^{\otimes 2r-1} \cong X_{2r-1} \oplus Y_{2r-1}$. For $m = 2r-1$, specializing equation~\eqref{E:f_2r-1} at $A = q^{\frac{r+1}{2}}$ gives
 \[
  f_{2r-1} = g_{2r-2} \otimes f_1 - g'_{2r-3}
 \]
 where
 \begin{equation}
  \fusion{g_prime_2r-3} := -[2] \cdot \fusion{tilde_g_2r-3_a} + 2 \cdot \fusion{tilde_g_2r-3_b} \label{E:g_prime_2r-3}
 \end{equation}
 We know $F_{\TL}(g_{2r-2} \otimes f_1)$ restricts to the identity on $P_{2r-2} \otimes X_1$ and to zero on $Q_{2r-2} \otimes X_1$ thanks to equation~\eqref{E:F(g_m)}. Then, using equation~\eqref{E:X_r-1_+_-}, we have
 \begin{align*}
  \begin{split}
   a_0^{r-1,+} &= x_0^{2r-2} \otimes a_0^1 = x_0^{2r-3} \otimes a_0^1 \otimes a_0^1, \\
   a_0^{r-1,-} &= q a_0^{2r-2} \otimes a_1^1 - y_0^{2r-2} \otimes a_0^1 \\
   &= - q^2 a_0^{2r-3} \otimes a_1^1 \otimes a_1^1 + q a_1^{2r-3} \otimes a_0^1 \otimes a_1^1 \\
   &\hspace*{\parindent} + q^2 a_1^{2r-3} \otimes a_1^1 \otimes a_0^1 - [2] y_0^{2r-3} \otimes a_0^1 \otimes a_0^1
  \end{split}
 \end{align*}
 This means that, using the definition of $F_\TL$ and equation~\eqref{E:g_prime_2r-3}, we get
 \begin{align*}
  F_{\TL}(g'_{2r-3})(a_0^{r-1,+}) &= 0, & F_{\TL}(f_{2r-1})(a_0^{r-1,+}) = a_0^{r-1,+}, \\*
  F_{\TL}(g'_{2r-3})(a_0^{r-1,-}) &= 0, & F_{\TL}(f_{2r-1})(a_0^{r-1,-}) = a_0^{r-1,-}.
 \end{align*}
 On the other hand, using equation~\eqref{E:tilde_P_m-2}, we have
 \begin{align*}
  \begin{split}
   {b'}_0^{2r-3} &= -\frac{q^{-1}}{[2]} x_{r-2}^{2r-2} \otimes a_1^1 - \frac{1}{[2]} b_0^{2r-2} \otimes a_0^1 \\
   &= q^{-1} x_{r-3}^{2r-3} \otimes a_1^1 \otimes a_1^1 - \frac{q^{-1}}{[2]} a_0^{2r-3} \otimes a_0^1 \otimes a_1^1 \\
   &\hspace*{\parindent} + \frac{q^2}{[2]} a_0^{2r-3} \otimes a_1^1 \otimes a_0^1 + q b_0^{2r-3} \otimes a_1^1 \otimes a_0^1 \\
   &\hspace*{\parindent} - b_1^{2r-3} \otimes a_0^1 \otimes a_0^1,
  \end{split}
 \end{align*}
 and, using equations~\eqref{E:P_m} and \eqref{E:tilde_P_m-2}, we have
 \begin{align*}
  a_0^{2r-3} \otimes a_0^1 &= - {a'}_0^{2r-4}, \\*
  b_0^{2r-3} \otimes a_0^1 &= \frac{q^{-2}}{[2]^2} x_{r-2}^{2r-2} + \frac{q^{-2}}{[2]^2} {a'}_0^{2r-4} - \frac{[3]}{[2]} {b'}_0^{2r-4}, \\*
  a_0^{2r-3} \otimes a_1^1 &= -\frac{1}{[2]} a_0^{2r-2} - \frac{1}{[2]} {a'}_1^{2r-4}, \\*
  b_0^{2r-3} \otimes a_1^1 &= \frac{\{ 2 \}'}{[2]^3} a_0^{2r-2} - \frac{1}{[2]^2} b_0^{2r-2} + \frac{\{ 2 \}'}{[2]^3} {a'}_1^{2r-4} - \frac{[3]}{[2]^2} {b'}_1^{2r-4}.
 \end{align*}
 This means that, using the definition of $F_\TL$ and equation~\eqref{E:g_prime_2r-3}, we get
 \begin{align*}
  F_{\TL}(g'_{2r-3})({b'}_0^{2r-3}) &= {b'}_0^{2r-3}, &
  F_{\TL}(f_{2r-1})({b'}_0^{2r-3}) &= 0.
 \end{align*}
 Therefore, $F_{\TL}(f_{2r-1}) = \varphi_{2r-1}^+ + \varphi_{2r-1}^-$. This proves equation~\eqref{E:F(f_2r-1)}.
\end{proof}

The functor $F_{\TL} : \TL \to \mods{\bar{U}}$ is not full. Indeed, we have 
\[
 \dim_\C \TL(g_m,g_{3r-m-2}) = 0
\] 
for every integer $r \leqs m \leqs 2r-2$, as follows directly from a parity argument, $3r-2m-2$ being odd. This does not match the dimension of $\Hom_{\bar{U}}(P_m,P_{3r-m-2})$, as recalled in Section~\ref{S:small_quantum_group}.

\section{Extended Temperley--Lieb category}\label{S:extended_TL_category}

In this section we introduce an extended version of the category $\TL$ over which we define a monoidal linear functor with target $\mods{\bar{U}}$.

\begin{definition}\label{D:extended_TL}
 The \textit{extended Temperley--Lieb category} $\tTL$ is the smallest linear category containing $\TL$ as a linear subcategory, as well as morphisms 
 \[
  p_{2r-1}^+, p_{2r-1}^- \in \tTL(f_{2r-1},f_{r-1}), \quad i_{2r-1}^+, i_{2r-1}^- \in \tTL(f_{r-1},f_{2r-1})
 \]
 satisfying, for every integer $0 \leqs m \leqs r-1$ and all $\varepsilon, \varepsilon' \in \{ +,- \}$, equations
 \begin{align}
  \etl{m+r-1_a_l} \hspace*{-5pt} &= \delta_{\varepsilon,\varepsilon'} \cdot \hspace*{-5pt} \etl{m+r-1_b_l} &
  \etl{m+r-1_a_r} \hspace*{-5pt} &= \delta_{\varepsilon,\varepsilon'} \cdot \hspace*{-5pt} \etl{m+r-1_b_r} \label{E:p_i} \\
  \hspace*{-5pt} \sum_{\varepsilon \in \{ +,- \}} \etl{m+r-1_c_l} \hspace*{-5pt} &= \hspace*{-5pt} \etl{m+r-1_d_l} &
  \hspace*{-10pt} \sum_{\varepsilon \in \{ +,- \}} \hspace*{-5pt} \etl{m+r-1_c_r} \hspace*{-5pt} &= \hspace*{-5pt} \etl{m+r-1_d_r} \label{E:i_p} \\
  \etl{r_2r-2_a} &= - \etl{r_2r-2_b} &  
  \etl{r_2r-2_c} &= - \etl{r_2r-2_d} \label{E:p_i_ambi}
 \end{align}
\end{definition}

\begin{remark}\label{R:p_i_ptr}
 As an immediate consequence of equation~\eqref{E:p_i_ambi}, for every $\varepsilon \in \{ +,- \}$ we have
 \begin{align}
  \etl{r-2_2r_a} &= - \etl{r-2_2r_b} &  
  \etl{r-2_2r_c} &= - \etl{r-2_2r_d} \label{E:p_i_ambi_prime} \\
  \etl{p_2r-1_ptr} &= 0 & \etl{i_2r-1_ptr} &= 0 \label{E:p_i_ptr}
 \end{align}
\end{remark}

We define a monoidal linear functor
\[
 F_{\tTL} : \tTL \to \mods{\bar{U}}
\]
extending $F_{\TL} : \TL \to \mods{\bar{U}}$ by setting
\[
 F_{\tTL}(p_{2r-1}^\varepsilon) := \pi_{2r-1}^\varepsilon, \quad
 F_{\tTL}(i_{2r-1}^\varepsilon) := \iota_{2r-1}^\varepsilon
\]
for all $\varepsilon, \varepsilon' \in \{ +,- \}$.

\begin{lemma}\label{L:well-def}
 The monoidal linear functor $F_{\tTL} : \tTL \to \mods{\bar{U}}$ is well-defined.
\end{lemma}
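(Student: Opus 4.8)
The plan is to verify that $F_{\tTL}$ respects each of the defining relations of $\tTL$ listed in Definition~\ref{D:extended_TL}, namely equations~\eqref{E:p_i}, \eqref{E:i_p}, and \eqref{E:p_i_ambi}. Since $\tTL$ is defined as the smallest linear category containing $\TL$ together with the four new morphisms subject to these relations, and since $F_{\TL}$ is already a well-defined monoidal linear functor on $\TL$ by Lemma~\ref{L:functor}, the only thing to check is that the prescribed images $\pi_{2r-1}^\varepsilon$ and $\iota_{2r-1}^\varepsilon$ of the new generators satisfy in $\mods{\bar{U}}$ the relations obtained by applying $F_{\TL}$ (more precisely $F_{\tTL}$, using Lemma~\ref{L:functor} on the pieces coming from $\TL$) to equations~\eqref{E:p_i}, \eqref{E:i_p}, and \eqref{E:p_i_ambi}.

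First I would translate each diagrammatic relation into an identity of $\bar{U}$-module morphisms. Applying $F_{\tTL}$ to equation~\eqref{E:p_i}: the left-hand side becomes the composite $X_{r-1} \xrightarrow{\iota_{2r-1}^\varepsilon} X_{2r-1} \xrightarrow{\pi_{2r-1}^{\varepsilon'}} X_{r-1}$, suitably tensored and pre/post-composed with images of Jones--Wenzl idempotents, which by Lemma~\ref{L:functor} are the identities on the relevant simple summands; so \eqref{E:p_i} reduces to $\pi_{2r-1}^{\varepsilon'} \iota_{2r-1}^\varepsilon = \delta_{\varepsilon,\varepsilon'} \id_{X_{r-1}}$, which is precisely relation~\eqref{E:pi_iota_rel_1}. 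Applying $F_{\tTL}$ to equation~\eqref{E:i_p} reduces to $\sum_{\varepsilon} \iota_{2r-1}^\varepsilon \pi_{2r-1}^\varepsilon = \id_{X_{2r-1}}$, which is relation~\eqref{E:pi_iota_rel_2} (here one uses $F_{\TL}(f_{2r-1}) = X_{2r-1}$ from equation~\eqref{E:F(f_2r-1)} to identify the right-hand side). For equation~\eqref{E:p_i_ambi}, I would argue that both sides are morphisms in a $\Hom$-space of dimension one — namely $\Hom_{\bar{U}}(X_{r-1}, X_{2r-1})$ composed appropriately down to $\Hom_{\bar{U}}(P_{2r-2} \otimes X_1, X_{r-1})$ or similar — so that the two sides differ by a scalar, and then pin down the scalar to be $-1$ by evaluating on an explicit highest-weight (or dominant) vector, using the bases fixed in equations~\eqref{E:X_r-1_+_-}, \eqref{E:P_r}, \eqref{E:tilde_P_m-2} and the concrete formulas for $c$, $e$, $\pi_{2r-1}^\varepsilon$, $\iota_{2r-1}^\varepsilon$. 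The sign $-1$ should match the sign already appearing in equation~\eqref{E:f_ambi} of Remark~\ref{R:specialization}, which governs the analogous partial-trace-type ambiguity for $f_{r-1}$, so I would reduce \eqref{E:p_i_ambi} to \eqref{E:f_ambi} together with the definitions of $p_{2r-1}^\varepsilon$ and $i_{2r-1}^\varepsilon$ via cups and caps on $f_{2r-1}$.

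The main obstacle will be the third relation, equation~\eqref{E:p_i_ambi}: unlike the first two, which are essentially verbatim the already-established module relations \eqref{E:pi_iota_rel_1}–\eqref{E:pi_iota_rel_2}, the ambidextrous relation mixes the new generators with cups and caps, so one must either carry out an explicit computation on basis vectors or else invoke a one-dimensionality argument for the relevant space of intertwiners and fix the constant. I expect the cleanest route is the dimension-counting argument (using the list of $\Hom$-dimensions in Section~\ref{S:small_quantum_group}) reducing everything to a single scalar, evaluated on one carefully chosen vector; the bookkeeping of which tensor factor carries which cup/cap is the only delicate point. Once \eqref{E:p_i_ambi} is verified, Remark~\ref{R:p_i_ptr} is a formal consequence and does not need separate treatment for well-definedness. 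This completes the verification that all defining relations are sent to valid identities, hence $F_{\tTL}$ is well-defined.
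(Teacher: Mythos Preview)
Your reduction of equations~\eqref{E:p_i} and \eqref{E:i_p} to the module identities \eqref{E:pi_iota_rel_1} and \eqref{E:pi_iota_rel_2} only works for $m=0$. For $0<m\leqs r-1$ these are not relations of the form $\id_m\otimes(\pi_{2r-1}^{\varepsilon'}\iota_{2r-1}^{\varepsilon})$: the $m$ strands pass from one side of $p_{2r-1}^{\varepsilon'}$, $i_{2r-1}^{\varepsilon}$ to the other, so under $F_{\tTL}$ the two sides become genuinely different intertwiners $X_{r-1}\otimes X_m\to X_m\otimes X_{r-1}$ (and the right-handed variants). In the paper's proof this is exactly the content of the computations of $\tau_{r-1,m}^{\varepsilon,\varepsilon'}$, $\upsilon_{r-1,m}^{\varepsilon,\varepsilon'}$, $\tau_{2r-1,m}^{\varepsilon}$, $\upsilon_{2r-1,m}^{\varepsilon}$: one must expand $a_k^{r-1}$ and $a_k^{r-1,\pm}$ via the tensor formulas of the Appendix (equations~\eqref{E:a_j^r-1_+_-}) and check that, term by term, $\pi_{2r-1}^{\pm}$ and $\iota_{2r-1}^{\pm}$ interact with the pieces $x_{k-\ell}^{2r-m-1}$, $y_{k-\ell}^{2r-m-1}$ exactly so as to reproduce the corresponding $\varphi_{r-1}$ or $\varphi_{2r-1}^{\pm}$ expressions. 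Nothing in Lemma~\ref{L:functor} lets you bypass this: the ``images of Jones--Wenzl idempotents'' are identities on summands, but that does not tell you how $\pi_{2r-1}^{\pm}$ behaves on $X_m\otimes P_{2r-m-1}\subset X^{\otimes(2r-1)}$, which is what the $m>0$ relations assert.

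Your dimension argument for equation~\eqref{E:p_i_ambi} is also not quite safe: after applying $F_{\tTL}$ the two sides live in $\Hom_{\bar{U}}(P_r,P_{2r-2})$, which has dimension $2$ (it is spanned by $\gamma_r^{+}$ and $\gamma_r^{-}$), so proportionality is not automatic. You would first have to argue that both sides lie in the same one-dimensional subspace (for fixed $\varepsilon$), which in effect amounts to the same computation the paper performs on the dominant vector $b_0^r$. The paper's approach --- evaluate everything on $b_0^r$ and $b_0^{2r-2}$ using the explicit bases --- is what actually pins down both the subspace and the sign at once.
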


The proof of Lemma~\ref{L:well-def} is a lengthy computation which will occupy the remainder of this section.

\begin{proof}
 In order to discuss the proof, let us first choose names for the morphisms appearing in equations~\eqref{E:p_i} and \eqref{E:i_p}, which we rewrite as
 \begin{align*}
  t_{r-1,m}^{\varepsilon,\varepsilon'} &= \delta_{\varepsilon,\varepsilon'} \cdot t_{r-1,m}, &
  u_{r-1,m}^{\varepsilon,\varepsilon'} &= \delta_{\varepsilon,\varepsilon'} \cdot u_{r-1,m}, \\*
  \sum_{\varepsilon \in \{ +,- \}} t_{2r-1,m}^\varepsilon &= t_{2r-1,m}, &
  \sum_{\varepsilon \in \{ +,- \}} u_{2r-1,m}^\varepsilon &= u_{2r-1,m},
 \end{align*}
 respectively. We also give names to the images of these morphisms under the functor $F_{\tTL}$ by setting
 \begin{align*}
  \tau_{r-1,m}^{\varepsilon,\smash{\varepsilon'}} &:= F_{\tTL}(t_{r-1,m}^{\varepsilon,\smash{\varepsilon'}}), &
  \tau_{r-1,m} &:= F_{\tTL}(t_{r-1,m}), \\*
  \upsilon_{r-1,m}^{\varepsilon,\smash{\varepsilon'}} &:= F_{\tTL}(u_{r-1,m}^{\varepsilon,\smash{\varepsilon'}}), &
  \upsilon_{r-1,m} &:= F_{\tTL}(u_{r-1,m}), \\*
  \tau_{2r-1,m}^\varepsilon &:= F_{\tTL}(t_{2r-1,m}^\varepsilon), &
  \tau_{2r-1,m} &:= F_{\tTL}(t_{2r-1,m}), \\*
  \upsilon_{2r-1,m}^\varepsilon &:= F_{\tTL}(u_{2r-1,m}^\varepsilon), &
  \upsilon_{2r-1,m} &:= F_{\tTL}(u_{2r-1,m}).
 \end{align*}
 Therefore, what we need to show is
 \begin{align*}
  \tau_{r-1,m}^{\varepsilon,\varepsilon'} &= \delta_{\varepsilon,\varepsilon'} \tau_{r-1,m}, &
  \upsilon_{r-1,m}^{\varepsilon,\varepsilon'} &= \delta_{\varepsilon,\varepsilon'} \upsilon_{r-1,m}, \\
  \sum_{\varepsilon \in \{ +,- \}} \tau_{2r-1,m}^\varepsilon &= \tau_{2r-1,m}, &
  \sum_{\varepsilon \in \{ +,- \}} \upsilon_{2r-1,m}^\varepsilon &= \upsilon_{2r-1,m}.
 \end{align*} 
 When $m = 0$, the relations reduce to equations~\eqref{E:pi_iota_rel_1} and \eqref{E:pi_iota_rel_2}. Then, let us suppose $0 <m \leqs r-1$. Thanks to equation~\eqref{E:a_j^r-1_+_-}, we have
 \begin{align*}
  \begin{split}
   a_0^{r-1} &= a_0^m \otimes a_0^{r-m-1} = a_0^{r-m-1} \otimes a_0^m, \\
   a_0^{r-1,+} &= a_0^m \otimes x_0^{2r-m-1} = x_0^{2r-m-1} \otimes a_0^m, \\
   a_{r-1}^{r-1} &= (-1)^m a_m^m \otimes a_{r-m-1}^{r-m-1} = (-1)^m a_{r-m-1}^{r-m-1} \otimes a_m^m, \\
   a_{r-1}^{r-1,-} &= (-1)^{m+1} [m] a_m^m \otimes y_{r-m-1}^{2r-m-1} = (-1)^{m+1} [m] y_{r-m-1}^{2r-m-1} \otimes a_m^m.
  \end{split}
 \end{align*}
 Then, for all integers $0 \leqs j \leqs m$ and $0 \leqs k \leqs r-1$, thanks to equations~\eqref{E:alpha_r-1} to \eqref{E:alpha_2r-1_-} and \eqref{E:a_j^r-1_+_-}, we have
 \begin{align*}
  &\tau_{r-1,m}^{+,+}(a_k^{r-1} \otimes a_j^m) \\*
  &\hspace*{\parindent} = \sum_{\ell = \max \{ m+k-r+1,0 \}}^{\min \{ m,k \}} \frac{[k]!}{[\ell]![k-\ell]!} q^{-(k-\ell)(m-\ell)} a_\ell^m \otimes \pi_{2r-1}^+(x_{k-\ell}^{2r-m-1} \otimes a_j^m) \\*
  &\hspace*{\parindent} = \sum_{\ell = \max \{ m+k-r+1,0 \}}^{\min \{ m,k \}} \frac{[k]!}{[\ell]![k-\ell]!} q^{-(k-\ell)(m-\ell)} a_\ell^m \otimes \varphi_{r-1}(a_{k-\ell}^{r-m-1} \otimes a_j^m) \\*
  &\hspace*{\parindent} = \tau_{r-1,m}(a_k^{r-1} \otimes a_j^m),
 \end{align*}
 we have
 \begin{align*}
  &\tau_{r-1,m}^{-,-}(a_k^{r-1} \otimes a_j^m) \\*
  &\hspace*{\parindent} = -[m] \sum_{\ell = \max \{ m+k-r+1,0 \}}^{\min \{ m,k \}} \frac{[k]!}{[\ell]![k-\ell]!} q^{-(k-\ell)(m-\ell)} a_\ell^m \otimes \pi_{2r-1}^-(y_{k-\ell}^{2r-m-1} \otimes a_j^m) \\*
  &\hspace*{\parindent} = \sum_{\ell = \max \{ m+k-r+1,0 \}}^{\min \{ m,k \}} \frac{[k]!}{[\ell]![k-\ell]!} q^{-(k-\ell)(m-\ell)} a_\ell^m \otimes \varphi_{r-1}(a_{k-\ell}^{r-m-1} \otimes a_j^m) \\*
  &\hspace*{\parindent} = \tau_{r-1,m}(a_k^{r-1} \otimes a_j^m),
 \end{align*}
 we have
 \begin{align*}
  &\upsilon_{r-1,m}^{+,+}(a_j^m \otimes a_k^{r-1}) \\*
  &\hspace*{\parindent} = \sum_{\ell = \max \{ m+k-r+1,0 \}}^{\min \{ m,k \}} \frac{[k]!}{[\ell]![k-\ell]!} q^{(m+k-\ell+1)\ell} \pi_{2r-1}^+(a_j^m \otimes x_{k-\ell}^{2r-m-1}) \otimes a_\ell^m \\*
  &\hspace*{\parindent} = \sum_{\ell = \max \{ m+k-r+1,0 \}}^{\min \{ m,k \}} \frac{[k]!}{[\ell]![k-\ell]!} q^{(m+k-\ell+1)\ell} \varphi_{r-1}(a_j^m \otimes a_{k-\ell}^{r-m-1}) \otimes a_\ell^m \\*
  &\hspace*{\parindent} = \upsilon_{r-1,m}(a_j^m \otimes a_k^{r-1}),
 \end{align*}
 we have
 \begin{align*}
  &\upsilon_{r-1,m}^{-,-}(a_j^m \otimes a_k^{r-1}) \\*
  &\hspace*{\parindent} = -[m] \sum_{\ell = \max \{ m+k-r+1,0 \}}^{\min \{ m,k \}} \frac{[k]!}{[\ell]![k-\ell]!} q^{(m+k-\ell+1)\ell} \pi_{2r-1}^-(a_j^m \otimes y_{k-\ell}^{2r-m-1}) \otimes a_\ell^m \\*
  &\hspace*{\parindent} = \sum_{\ell = \max \{ m+k-r+1,0 \}}^{\min \{ m,k \}} \frac{[k]!}{[\ell]![k-\ell]!} q^{(m+k-\ell+1)\ell} \varphi_{r-1}(a_j^m \otimes a_{k-\ell}^{r-m-1}) \otimes a_\ell^m \\*
  &\hspace*{\parindent} = \upsilon_{r-1,m}(a_j^m \otimes a_k^{r-1}),
 \end{align*}
 we have
 \begin{align*}
  \tau_{r-1,m}^{+,-}(a_k^{r-1} \otimes a_j^m) &= \tau_{r-1,m}^{-,+}(a_k^{r-1} \otimes a_j^m) = 0, \\*
  \upsilon_{r-1,m}^{+,-}(a_j^m \otimes a_k^{r-1}) &= \upsilon_{r-1,m}^{-,+}(a_j^m \otimes a_k^{r-1}) = 0,
 \end{align*}
 we have
 \begin{align*}
  &\tau_{2r-1,m}^+(a_k^{r-1,+} \otimes a_j^m) \\*
  &\hspace*{\parindent} = \sum_{\ell = \max \{ m+k-r+1,0 \}}^{\min \{ m,k \}} \frac{[k]!}{[\ell]![k-\ell]!} q^{-(k-\ell)(m-\ell)} a_\ell^m \otimes \iota_{2r-1}^+(a_{k-\ell}^{r-m-1} \otimes a_j^m) \\*
  &\hspace*{\parindent} = \sum_{\ell = \max \{ m+k-r+1,0 \}}^{\min \{ m,k \}} \frac{[k]!}{[\ell]![k-\ell]!} q^{-(k-\ell)(m-\ell)} a_\ell^m \otimes \varphi_{2r-1}^+(x_{k-\ell}^{2r-m-1} \otimes a_j^m) \\*
  &\hspace*{\parindent} = \tau_{2r-1,m}(a_k^{r-1+} \otimes a_j^m),
 \end{align*}
 we have
 \begin{align*}
  &\tau_{2r-1,m}^-(a_k^{r-1,-} \otimes a_j^m) \\*
  &\hspace*{\parindent} = \sum_{\ell = \max \{ m+k-r+1,0 \}}^{\min \{ m,k \}} \frac{[k]!}{[\ell]![k-\ell]!} q^{-(k-\ell)(m-\ell)} a_\ell^m \otimes \iota_{2r-1}^-(a_{k-\ell}^{r-m-1} \otimes a_j^m) \\*
  &\hspace*{\parindent} = -[m] \sum_{\ell = \max \{ m+k-r+1,0 \}}^{\min \{ m,k \}} \frac{[k]!}{[\ell]![k-\ell]!} q^{-(k-\ell)(m-\ell)} a_\ell^m \otimes \varphi_{2r-1}^-(y_{k-\ell}^{2r-m-1} \otimes a_j^m) \\*
  &\hspace*{\parindent} = \tau_{2r-1,m}(a_k^{r-1,-} \otimes a_j^m),
 \end{align*}
 we have
 \begin{align*}
  &\upsilon_{r-1,m}^+(a_j^m \otimes a_k^{r-1,+}) \\*
  &\hspace*{\parindent} = \sum_{\ell = \max \{ m+k-r+1,0 \}}^{\min \{ m,k \}} \frac{[k]!}{[\ell]![k-\ell]!} q^{(m+k-\ell+1)\ell} \iota_{2r-1}^+(a_j^m \otimes a_{k-\ell}^{r-m-1}) \otimes a_\ell^m \\*
  &\hspace*{\parindent} = \sum_{\ell = \max \{ m+k-r+1,0 \}}^{\min \{ m,k \}} \frac{[k]!}{[\ell]![k-\ell]!} q^{(m+k-\ell+1)\ell} \varphi_{2r-1}^+(a_j^m \otimes x_{k-\ell}^{2r-m-1}) \otimes a_\ell^m \\*
  &\hspace*{\parindent} = \upsilon_{2r-1,m}(a_j^m \otimes a_k^{r-1,+}),
 \end{align*}
 we have
 \begin{align*}
  &\upsilon_{2r-1,m}^-(a_j^m \otimes a_k^{r-1-}) \\*
  &\hspace*{\parindent} = \sum_{\ell = \max \{ m+k-r+1,0 \}}^{\min \{ m,k \}} \frac{[k]!}{[\ell]![k-\ell]!} q^{(m+k-\ell+1)\ell} \iota_{2r-1}^-(a_j^m \otimes a_{k-\ell}^{r-m-1}) \otimes a_\ell^m \\*
  &\hspace*{\parindent} = -[m] \sum_{\ell = \max \{ m+k-r+1,0 \}}^{\min \{ m,k \}} \frac{[k]!}{[\ell]![k-\ell]!} q^{(m+k-\ell+1)\ell} \varphi_{2r-1}^-(a_j^m \otimes y_{k-\ell}^{2r-m-1}) \otimes a_\ell^m \\*
  &\hspace*{\parindent} = \upsilon_{2r-1,m}(a_j^m \otimes a_k^{r-1,-}),
 \end{align*}
 and we have
 \begin{align*}
  \tau_{2r-1,m}^+(a_k^{r-1,-} \otimes a_j^m) &= \tau_{2r-1,m}^-(a_k^{r-1,+} \otimes a_j^m) = 0, \\*
  \upsilon_{r-1,m}^+(a_j^m \otimes a_k^{r-1,-}) &= \upsilon_{r-1,m}^-(a_j^m \otimes a_k^{r-1,+}) = 0.
 \end{align*}
 Similarly, let us choose names for the morphisms appearing in equation~\eqref{E:p_i_ambi}, which we rewrite as
 \[
  c_r^\varepsilon = - \tilde{c}_r^\varepsilon, \qquad \qquad
  d_r^\varepsilon = - \tilde{d}_r^\varepsilon.
 \]
 Equations~\eqref{E:X_m}, \eqref{E:P_r}, and \eqref{E:a_j^r-1_+_-} give
 \begin{align*}
  b_0^r &= a_0^{r-1} \otimes a_1^1 = a_0^1 \otimes a_0^{r-1} \otimes a_1^1, \\
  a_0^{r-1,+} &= x_0^{2r-2} \otimes a_0^1, \\
  a_0^{r-1,-} &= q a_0^{2r-2} \otimes a_1^1 - y_0^{2r-2} \otimes a_0^1, \\
  a_1^{r-1,+} &= q^{-1} a_0^1 \otimes x_1^{2r-2} + a_1^1 \otimes x_0^{2r-2}, \\
  a_1^{r-1,-} &= -q^{-1} a_0^1 \otimes y_1^{2r-2} - a_1^1 \otimes y_0^{2r-2},
 \end{align*} 
 and so equation~\eqref{E:alpha_r-1} gives
 \begin{align*}
  F_{\tTL}(c_r^+)(b_0^r) &= - x_0^{2r-2}, &
  F_{\tTL}(\tilde{c}_r^+)(b_0^r) &= x_0^{2r-2}, \\
  F_{\tTL}(c_r^-)(b_0^r) &= y_0^{2r-2}, &
  F_{\tTL}(\tilde{c}_r^-)(b_0^r) &= - y_0^{2r-2}.
 \end{align*}
 Furthermore, equations~\eqref{E:v_0_m_simple} and \eqref{E:v_0_m_projective} give
 \[
  x_0^r = a_0^{r-1} \otimes a_0^1 = a_0^1 \otimes a_0^{r-1}, \qquad
  y_0^r = a_{r-1}^{r-1} \otimes a_1^1 = a_1^1 \otimes a_{r-1}^{r-1},
 \]
 and so equations~\eqref{E:alpha_2r-1_+} and \eqref{E:alpha_2r-1_-} give
 \begin{align*}
  F_{\tTL}(d_r^+)(b_0^{2r-2}) &= y_0^r, &
  F_{\tTL}(\tilde{d}_r^+)(b_0^{2r-2}) &= - y_0^r, \\
  F_{\tTL}(d_r^-)(b_0^{2r-2}) &= - x_0^r, &
  F_{\tTL}(\tilde{d}_r^-)(b_0^{2r-2}) &= x_0^r. \qedhere
 \end{align*}
\end{proof}

\section{Proof of Theorem~\ref{T:functor}}\label{S:proof}

In this section we establish our main result. We do this in two main steps. First of all, for convenience of notation, let us set 
\[
 t_m :=
 \begin{cases}
  f_m & 0 \leqs m \leqs r-1 \\
  g_m & r \leqs m \leqs 2r-2
 \end{cases} \in \tTL
\]
for all integers $0 \leqs m \leqs 2r-2$. Then, let us show the family of objects 
\[
 \{ t_m \in \tTL \mid 0 \leqs m \leqs 2r-2 \}
\]
dominates $\tTL$.

\begin{proposition}\label{P:domination}
 For every $m \in \tTL$ there exists a finite set $J_{m,n}$ for every integer $0 \leqs n \leqs 2r-2$ and morphisms $u_m^{n,j} \in \tTL(m,n)$ and $v^m_{n,j} \in \tTL(n,m)$ for every $j \in J_{m,n}$ satisfying
 \[
  \id_m = \sum_{n=0}^{2r-2} \sum_{j \in J_{m,n}} v^m_{n,j} t_n u_m^{n,j}.
 \]
\end{proposition}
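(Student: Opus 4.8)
The plan is to prove, by strong induction on $k \in \N$, that $\id_k$ --- the identity of the object $k$ of $\TL \subseteq \tTL$ --- is dominated by $\{t_0, \dots, t_{2r-2}\}$. The general statement follows formally: every object of $\tTL$ is a retract of some $k \in \N$ via an idempotent $p \in \TL(k)$ (this is how the idempotent completion is set up in Section~\ref{S:monoidal_linear_functor}), and if $\id_k = \sum_{n,j} v^k_{n,j}\, t_n\, u^k_{n,j}$ then $p = \id_p = \sum_{n,j} (p\, v^k_{n,j})\, t_n\, (u^k_{n,j}\, p)$ is a domination of $p$. The base cases $k = 0, 1$ are immediate, since $\id_0 = f_0 = t_0$ and $\id_1 = f_1 = t_1$. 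For $k \geqs 2$ I would write $\id_k = \id_{k-1} \otimes \id_1$, apply the inductive hypothesis to $\id_{k-1}$, and tensor on the right by $\id_1 = f_1$; this reduces the claim to showing that, for every $0 \leqs n \leqs 2r-2$, the idempotent $t_n \otimes f_1 \in \TL(n+1)$, viewed as an object, is dominated by $\{t_0, \dots, t_{2r-2}\}$.

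This last point is settled by a case analysis driven by the recursions of Section~\ref{S:TL_category}. If $0 \leqs n \leqs r-2$, the specialized Wenzl recursion~\eqref{E:f_def} writes $f_n \otimes f_1$ as $f_{n+1}$ plus a scalar multiple of a diagram factoring through $f_{n-1}$, so it is dominated by $t_{n+1} = f_{n+1}$ and $t_{n-1} = f_{n-1}$ (the case $n = 0$ being trivial). If $n = r-1$, equation~\eqref{E:g_r} gives $f_{r-1} \otimes f_1 = g_r = t_r$ outright. If $n = r$, the specialization of~\eqref{E:g_r+1} gives $g_r \otimes f_1 = g_{r+1} + {f'}_{r-1}^\up + {f'}_{r-1}^\down$, where $g_{r+1} = t_{r+1}$ is a sub-idempotent of $g_r \otimes f_1$ by~\eqref{E:g_g}, and ${f'}_{r-1}^\up$, ${f'}_{r-1}^\down$ are, from their defining diagrams, linear combinations of morphisms each factoring through $f_{r-1} = t_{r-1}$. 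If $r+1 \leqs n \leqs 2r-3$, the specialization of~\eqref{E:g_m} gives $g_n \otimes f_1 = g_{n+1} + g'_{n-1}$, with $g_{n+1} = t_{n+1}$ absorbing $g_n \otimes f_1$ again by~\eqref{E:g_g}, and $g'_{n-1}$ factoring through $g_{n-1} = t_{n-1}$. In each case the absorption relations~\eqref{E:f_f}, \eqref{E:f_g}, \eqref{E:g_g} are precisely what lets the factorizations be read off as morphisms of the object $t_n \otimes f_1$; and since a domination may reuse the same $t_k$ several times, a linear combination of diagrams through a fixed $f_k$ or $g_k$ still yields a valid domination.

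The remaining case $n = 2r-2$ is the one that genuinely forces us into $\tTL$. By the specialization of Lemma~\ref{L:f_2r-1}, $g_{2r-2} \otimes f_1 = f_{2r-1} + g'_{2r-3}$; here $g'_{2r-3}$ factors through $g_{2r-3} = t_{2r-3}$ as before, but $f_{2r-1}$ is not one of the objects $t_m$ and, as noted at the end of Section~\ref{S:monoidal_linear_functor}, cannot be dominated by them inside $\TL$ alone. Inside $\tTL$, however, the $m = 0$ instance of the defining relation~\eqref{E:i_p} reads $i_{2r-1}^+\, p_{2r-1}^+ + i_{2r-1}^-\, p_{2r-1}^- = \id_{f_{2r-1}}$, which exhibits $f_{2r-1}$ as dominated by $t_{r-1} = f_{r-1}$; composing with the absorption identity $f_{2r-1}(g_{2r-2} \otimes f_1) = f_{2r-1}$ upgrades this to a domination of $g_{2r-2} \otimes f_1$, and the induction closes. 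The main obstacle is not conceptual but computational: one has to unwind the explicit formulas of Section~\ref{S:TL_category} to pin down the correction idempotents ${f'}$, $g'$ and check that each factors through the advertised $t_k$, and one has to verify the absorption identities invoked --- in particular $f_{2r-1}(g_{2r-2} \otimes f_1) = f_{2r-1}$, which is not literally among the relations of Section~\ref{S:TL_category} but follows from $f_{2r-1} = g_{2r-2} \otimes f_1 - g'_{2r-3}$ together with $g'_{2r-3}(g_{2r-2} \otimes f_1) = g'_{2r-3}$.
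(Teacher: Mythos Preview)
Your proposal is correct and follows essentially the same approach as the paper: induct on the number of strands, tensor the inductive decomposition by $f_1$, and then split each $t_n \otimes f_1$ using the recursions \eqref{E:f_def}, \eqref{E:g_r}--\eqref{E:g_m}, and \eqref{E:f_2r-1}, with the crucial case $n = 2r-2$ handled via the $\tTL$-relation~\eqref{E:i_p} at $m=0$ to split $f_{2r-1}$ through two copies of $f_{r-1}$. The paper proceeds identically, just giving the resulting $u$'s and $v$'s by explicit diagrams rather than arguing absorption; your extra remark reducing the idempotent-completion case to the integer case is a small addition the paper leaves implicit.
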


\begin{proof}
 The statement is proved by induction on $m$. For $m = 0$, we simply need to set $\left| J_{0,n} \right| = \delta_{0,n}$ and $u_0^0 = v_0^0 = f_0$. For $m > 0$, we decompose $\id_m$ as
 \[
  \domination{id_m} = \sum_{n=0}^{2r-2} \sum_{j \in J_{m-1,n}} \domination{u_t_v}
 \]
 Equations~\eqref{E:f_def} and \eqref{E:g_r} allow us to rewrite $f_n \otimes f_1$ for every integer $0 \leqs n \leqs r-1$, equations~\eqref{E:g_r+1} and \eqref{E:g_m} allow us to rewrite $g_n \otimes f_1$ for every integer $r \leqs n \leqs 2r-3$, and furthermore, since equation~\eqref{E:i_p} for $m=0$ implies in particular
 \[
  \domination{f_2r-1} = \domination{f_2r-1_+} + \domination{f_2r-1_-}
 \]
 equation~\eqref{E:f_2r-1} allows us to rewrite $g_{2r-2} \otimes f_1$, something we would not be able to do in $\TL$. For every integer $0 \leqs n < 2r-2$ and every $j \in J_{m-1,n}$, we set
 \[
  \domination{u_m_n+1_j} := \domination{u_m_n+1_j_a} \qquad
  \domination{v_m_n+1_j} := \domination{v_m_n+1_j_a}
 \]
 On the other hand, for $n = 2r-2$ and for every $j \in J_{m-1,2r-2}$, we set
 \begin{gather*}
  \domination{u_m_r-1_j,epsilon} := \domination{u_m_r-1_j,epsilon_a} \qquad
  \domination{v_m_r-1_j,epsilon} := \domination{v_m_r-1_j,epsilon_a} 
 \end{gather*}
 When $0 < n < r-1$, we set
 \[
  \domination{u_m_n-1_j} := -\frac{[n]}{[n+1]} \cdot \domination{u_m_n-1_j_a} \qquad
  \domination{v_m_n-1_j} := \domination{v_m_n-1_j_a}
 \]
 for every $j \in J_{m-1,n}$. When $n = r$, we set
 \begin{gather*}
  \domination{u_m_r-1_j,+} := \domination{u_m_r-1_j,+_a} \qquad
  \domination{v_m_r-1_j,+} := \domination{v_m_r-1_j,+_a} \\
  \domination{u_m_r-1_j,-} := \domination{u_m_r-1_j,-_a} + [2] \cdot \domination{u_m_r-1_j,+_a} \qquad
  \domination{v_m_r-1_j,-} := \domination{v_m_r-1_j,-_a}
 \end{gather*}
 for every $j \in J_{m-1,r}$. When $r < n \leqs 2r-2$, we set
 \[
  \domination{u_m_n-1_j} := -\frac{[n]}{[n+1]} \cdot \domination{u_m_n-1_j_b} + \frac{2}{[n+1]^2} \cdot \domination{u_m_n-1_j_c} \qquad
  \domination{v_m_n-1_j} := \domination{v_m_n-1_j_b}
 \]
 for every $j \in J_{m-1,n}$. Therefore we can set
 \[
  J_{m,n} := 
  \begin{cases}
   J_{m-1,1} & n = 0, \\
   J_{m-1,n-1} \sqcup J_{m-1,n+1} & 0 < n < r-1, \\
   J_{m-1,r-2} \sqcup \left( ( J_{m-1,r} \sqcup J_{m-1,2r-2}) \times \{ +,- \} \right) & n = r-1, \\
   J_{m-1,r-1} & n = r, \\
   J_{m-1,n-1} \sqcup J_{m-1,n+1} & r < n < 2r-2, \\
   J_{m-1,2r-3} & n = 2r-2.
  \end{cases} \qedhere
 \]
\end{proof}

The second step in the proof of Theorem~\ref{T:functor} consists in exhibiting morphisms of $\tTL$ whose image under $F_{\tTL}$ recovers the missing morphisms between indecomposable projective $\bar{U}$-modules. In order to do this, let us define morphisms $c_m^\varepsilon \in \tTL(g_m,g_{3r-m-2})$ and $d_m^\varepsilon \in \tTL(g_{3r-m-2},g_m)$ as
\begin{equation}
 \etl{c_m} \hspace*{-7.5pt} := \etl{c_m_a} \qquad
 \etl{d_m} \hspace*{-7.5pt} := \etl{d_m_a} \hspace*{-7.5pt} \label{E:c_d_def} 
\end{equation}
for every integer $r \leqs m \leqs 2r-2$, which immediately gives
\begin{equation}
 \etl{c_m} \hspace*{-7.5pt} = \etl{c_m_b} \qquad
 \etl{d_m} \hspace*{-7.5pt} = \etl{d_m_b} \label{E:c_d_m}
\end{equation}
The fact that $c_m^\varepsilon g_m = c_m^\varepsilon$ and that $g_m d_m^\varepsilon = d_m^\varepsilon$ can be proved by induction using equations~\eqref{E:g_r} to \eqref{E:g_m}, \eqref{E:p_i_ptr}, and \eqref{E:c_d_m}

\begin{lemma}\label{L:tilde_functor}
  For every integer $r \leqs m \leqs 2r-2$ and every $\varepsilon \in \{ +,- \}$ the morphisms $c_m^\varepsilon$ and $d_m^\varepsilon$ satisfy
 \begin{align}
  F_{\tTL}(c_m^+) &= (-1)^m \frac{[m-r]!}{[m+1]} \gamma_m^+, \label{E:F(c+)} \\*
  F_{\tTL}(c_m^-) &= -(-1)^m [m-r]! \gamma_m^-, \label{E:F(c-)} \\*
  F_{\tTL}(d_m^+) &= \frac{1}{[m-r+1]!} \gamma_{3r-m-2}^-, \label{E:F(d+)} \\*
  F_{\tTL}(d_m^-) &= -\frac{1}{[m-r+1]![m+1]} \gamma_{3r-m-2}^+, \label{E:F(d-)}
 \end{align}
 where $\gamma_m^+$ and $\gamma_m^-$ are defined by equations~\eqref{E:gamma_m_+_def} and \eqref{E:gamma_m_-_def}.
\end{lemma}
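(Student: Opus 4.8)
The plan is to proceed exactly as in the proof of Lemma~\ref{L:functor}. Since $F_{\tTL}$ is a monoidal linear functor sending morphisms of $\tTL$ to $\bar U$-module morphisms, and since by Lemma~\ref{L:functor} we have $F_{\TL}(g_m) = P_m$ and $F_{\TL}(g_{3r-m-2}) = P_{3r-m-2}$ as submodules of $X^{\otimes m}$ and $X^{\otimes 3r-m-2}$, we already know that $F_{\tTL}(c_m^\varepsilon) \in \Hom_{\bar U}(P_m,P_{3r-m-2})$ and $F_{\tTL}(d_m^\varepsilon) \in \Hom_{\bar U}(P_{3r-m-2},P_m)$. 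As the dominant vectors $b_0^m$ and $b_0^{3r-m-2}$ generate $P_m$ and $P_{3r-m-2}$ respectively as $\bar U$-modules, it suffices to check the four identities after evaluating on these vectors. On the right-hand sides this is immediate from \eqref{E:gamma_m_+_def} and \eqref{E:gamma_m_-_def}: one has $\gamma_m^+(b_0^m) = x_0^{3r-m-2}$, $\gamma_m^-(b_0^m) = y_0^{3r-m-2}$, $\gamma_{3r-m-2}^+(b_0^{3r-m-2}) = x_0^m$, and $\gamma_{3r-m-2}^-(b_0^{3r-m-2}) = y_0^m$, so the task reduces to computing $F_{\tTL}(c_m^\varepsilon)(b_0^m)$ and $F_{\tTL}(d_m^\varepsilon)(b_0^{3r-m-2})$ and matching them with the stated scalar multiples of the vectors $x_0^\bullet$ and $y_0^\bullet$.

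To carry out these computations I would first expand $b_0^m \in P_m \subset X^{\otimes m}$ and $b_0^{3r-m-2} \in P_{3r-m-2} \subset X^{\otimes 3r-m-2}$ in the standard tensor basis of $X^{\otimes \bullet}$ by unwinding the nested recursions \eqref{E:X_m}--\eqref{E:tilde_P_m-2}, exactly as was done for several vectors in the proof of Lemma~\ref{L:functor}. Next, unpacking the diagrammatic definitions \eqref{E:c_d_def}--\eqref{E:c_d_m}, the morphism $c_m^\varepsilon$ (resp.\ $d_m^\varepsilon$) is a composite of the idempotents $g_\bullet$ and $f_{2r-1}$ with cups, caps, and a single copy of the parity-changing generator $i_{2r-1}^\varepsilon$ or $p_{2r-1}^\varepsilon$. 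Applying $F_{\tTL}$ strand by strand---using the explicit formulas for $F_{\TL}$ on $\cup$ and $\cap$, the identities $F_{\tTL}(p_{2r-1}^\varepsilon) = \pi_{2r-1}^\varepsilon$ and $F_{\tTL}(i_{2r-1}^\varepsilon) = \iota_{2r-1}^\varepsilon$ together with the bases \eqref{E:X_r-1_+_-} of $X_{r-1}^\pm \subset X^{\otimes 2r-1}$, Lemma~\ref{L:functor} for the images of the surrounding $g_\bullet$ and $f_{2r-1}$, and, depending on how the diagrams in \eqref{E:c_d_def} are assembled, the images $F_{\TL}(p_\bullet)$, $F_{\TL}(i_\bullet)$, $F_{\TL}(h_\bullet)$ of \eqref{E:p_i_def} already computed there---one obtains $F_{\tTL}(c_m^\varepsilon)(b_0^m)$ and $F_{\tTL}(d_m^\varepsilon)(b_0^{3r-m-2})$ as explicit linear combinations in the tensor basis. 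Re-expressing these in the standard bases of $P_{3r-m-2}$ and $P_m$ via \eqref{E:P_r}--\eqref{E:P_m} and reading off the coefficients of $x_0^\bullet$ or $y_0^\bullet$ should produce the claimed scalars.

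Two observations should shorten the work. First, the $\varepsilon \leftrightarrow -\varepsilon$ symmetry: interchanging the roles of the $x$- and $y$-vectors (and of $X_{r-1}^+$ with $X_{r-1}^-$) turns the computation for $c_m^+$ into that for $c_m^-$ up to an easily tracked sign, and likewise for $d_m^\pm$, so only two of the four identities really require work. Second, one may try to exploit the duality $(-)^*$ given by rotation through $\pi$: since $h_m^* = h_m$ in $\TL$ for all $r \leqs m \leqs 2r-2$ by Remark~\ref{R:specialization}, and since $p_{2r-1}^\varepsilon$ and $i_{2r-1}^\varepsilon$ are exchanged by $(-)^*$ up to the ambidexterity relations \eqref{E:p_i_ambi}--\eqref{E:p_i_ptr}, the morphism $d_m^\varepsilon$ should be recognizable, up to sign, as the $*$-dual of $c_m^\varepsilon$; combined with the behaviour of $\gamma$-type maps under $(-)^*$ this would transfer the $c$-identities to the $d$-identities. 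Alternatively, one can organize the whole argument as an induction on $m$ using the recursions \eqref{E:g_r}--\eqref{E:g_m}, again in the style of Lemma~\ref{L:functor}, so as not to re-derive the tensor expansions from scratch at every level.

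The main obstacle is purely computational: it lies in (i) fully expanding the dominant vectors in the tensor basis through the layered recursions of Section~\ref{S:small_quantum_group}, and (ii) keeping exact track of all quantum integers, factorials, and powers of $q$ accumulated from the cups, caps, and the $\iota_{2r-1}^\varepsilon$/$\pi_{2r-1}^\varepsilon$ insertion, so that the normalizing constants $(-1)^m \frac{[m-r]!}{[m+1]}$, $-(-1)^m [m-r]!$, $\frac{1}{[m-r+1]!}$, and $-\frac{1}{[m-r+1]![m+1]}$ come out precisely. No conceptual ingredient beyond those already used for Lemma~\ref{L:functor} is needed; the delicacy is entirely in getting the scalars right.
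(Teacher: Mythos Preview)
Your plan is correct and lines up with the paper's own argument: both reduce to evaluating on the dominant vectors $b_0^m$ and $b_0^{3r-m-2}$, and both recognize that no new idea beyond the machinery of Lemma~\ref{L:functor} is needed. The paper in fact chooses the inductive organization you mention only as an alternative: it argues by induction on $m$, taking the base case $m=r$ directly from the verification of equation~\eqref{E:p_i_ambi} carried out in the proof of Lemma~\ref{L:well-def}, and handling the inductive step via the recursive form~\eqref{E:c_d_m} together with the expansion of $b_0^m$ from~\eqref{E:P_m}. This keeps the tensor expansions one layer deep at each stage and avoids the full unwinding you describe as your primary route, so the scalar bookkeeping stays manageable.

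One small caution on your proposed duality shortcut: Remark~\ref{R:specialization} only gives $h_m^* = h_m$ after specialization, not $g_m^* = g_m$, so rotating $c_m^\varepsilon$ through $\pi$ does not literally produce $d_m^\varepsilon$; the paper does not attempt this reduction and simply treats the $c$- and $d$-cases in parallel.
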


\begin{proof}
 The statement is proved by induction on $m$. For $m = r$ it follows from computations in the proof of Lemma~\ref{L:well-def} establishing the image under $F_{\tTL}$ of equation~\eqref{E:p_i_ambi}. For $r < m \leqs 2r-2$, equation~\eqref{E:P_m} gives
 \begin{align*}
  b_0^m &= \frac{q^{-m}}{[m+1]} a_0^{m-1} \otimes a_1^1 + [m]q b_0^{m-1} \otimes a_1^1 + \frac{[m]}{[m+1]} b_1^{m-1} \otimes a_0^1, \\*
  x_0^{3r-m-1} &= x_0^{3r-m-2} \otimes a_0^1, \\*
  x_1^{3r-m-1} &= q^{m+2} x_0^{3r-m-2} \otimes a_1^1 + x_1^{3r-m-2} \otimes a_0^1, \\*
  y_0^{3r-m-1} &= -\frac{q^{m+1}}{[m]} a_{m-r}^{3r-m-2} \otimes a_1^1 + \frac{[m+1]}{[m]} y_0^{3r-m-2} \otimes a_0^1, \\*
  y_1^{3r-m-1} &= \frac{[m+1]q^{m+2}}{[m]} y_0^{3r-m-2} \otimes a_1^1 + \frac{[m+1]}{[m]} y_1^{3r-m-2} \otimes a_0^1,
 \end{align*}
 and equation~\eqref{E:c_d_m} gives
 \begin{align*}
  F_{\tTL}(c_m^+)(b_0^m) &= (-1)^m \frac{[m-r]!}{[m+1]} x_0^{3r-m-2}, \\*
  F_{\tTL}(c_m^-)(b_0^m) &= -(-1)^m [m-r]! y_0^{3r-m-2}.
 \end{align*}
 This proves equations~\eqref{E:F(c+)} and \eqref{E:F(c-)}. Furthermore, equations~\eqref{E:P_m} and \eqref{E:tilde_P_m-2} give
 \begin{align*}
  b_0^{3r-m-2} \otimes a_0^1 &= \frac{q^{-m-1}}{[m+1]^2} x_{2r-m-1}^{3r-m-1} + \frac{q^{-m-1}}{[m+1]^2} {a'}_0^{3r-m-3} - \frac{[m+2]}{[m+1]} {b'}_0^{3r-m-3}, \\*
  b_0^{3r-m-2} \otimes a_1^1 &= \frac{\{ m+1 \}'}{[m+1]^3} a_0^{3r-m-1} - \frac{[m]}{[m+1]^2} b_0^{3r-m-1} + \frac{\{ m+1 \}'}{[m+1]^3} \tilde{a}_1^{3r-m-3} \\*
  &\hspace*{\parindent} - \frac{[m+2]}{[m+1]^2} \tilde{b}_1^{3r-m-3},
 \end{align*}  
 and equation~\eqref{E:P_m} gives
 \begin{align*}
  x_0^m &:= x_0^{m-1} \otimes a_0^1, &
  y_0^m &:= \frac{q^{-m}}{[m+1]} a_{2r-m-1}^{m-1} \otimes a_1^1 + \frac{[m]}{[m+1]} y_0^{m-1} \otimes a_0^1,
 \end{align*}
 so equation~\eqref{E:c_d_m} gives
 \begin{align*}
  F_{\tTL}(d_m^+)(b_0^{3r-m-2}) &= \frac{1}{[m-r+1]!} y_0^m, \\*
  F_{\tTL}(d_m^-)(b_0^{3r-m-2}) &= - \frac{1}{[m-r+1]![m+1]} x_0^m. \qedhere
 \end{align*}
\end{proof}

We can now prove Theorem~\ref{T:functor}.

\begin{proof}[Proof of Theorem~\ref{T:functor}]
 For convenience of notation, let us set 
 \[
  T_m :=
  \begin{cases}
   X_m  & 0 \leqs m \leqs r-1 \\
   P_m & r \leqs m \leqs 2r-2
  \end{cases} \in \mods{\bar{U}}
 \]
 for every integer $0 \leqs m \leqs 2r-2$. Let $f$ be a morphism in $\Hom_{\bar{U}}(X^{\otimes m},X^{\otimes m'})$ for some $m,m' \in \tTL$. Thanks to Proposition~\ref{P:domination} we have
 \[
  f = \sum_{n,n'=0}^{2r-2} \sum_{j \in J_{m,n}} \sum_{j' \in J_{m',n'}} F_{\tTL} (v^{m'}_{n',j'} t_{n'} u_{m'}^{n',j'}) \circ f \circ F_{\tTL} (v^m_{n,j} t_n u_m^{n,j}).
 \]
 where $J_{m,n}$ and $J_{m',n'}$ are finite sets, and where 
 \begin{align*}
  u_m^{n,j} &\in \tTL(m,n), & v^m_{n,j} &\in \tTL(n,m), \\
  u_{m'}^{n',j'} &\in \tTL(m',n'), & v^{m'}_{n',j'} &\in \tTL(n',m')
 \end{align*} 
 for all integers $0 \leqs n,n' \leqs 2r-2$ and for all $j \in J_{m,n}$ and $j' \in J_{m',n'}$. Now, thanks to Lemmas~\ref{L:functor} and \ref{L:tilde_functor}, the linear map $F_{\tTL} : \tTL(t_n,t_{n'}) \to \Hom_{\bar{U}}(T_n,T_{n'})$ is surjective for all integers $0 \leqs n,n' \leqs 2r-2$, and therefore
 \[
  F_{\tTL} (t_{n'} u_{m'}^{n',j'}) \circ f \circ F_{\tTL} (v^m_{n,j} t_n) \in \Hom_{\bar{U}}(T_n,T_{n'})
 \]
 belongs to its image. This means that $f$ belongs to the image of the linear map $F_{\tTL} : \tTL(m,m') \to \Hom_{\bar{U}}(X^{\otimes m},X^{\otimes m'})$.
\end{proof}

\section{Additional relations}\label{S:additional_relations}

In this section we list additional relations satisfied by images of generating morphisms $p_{2r-1}^+$, $p_{2r-1}^-$, $i_{2r-1}^+$, $i_{2r-1}^-$ in $\mods{\bar{U}}$. In order to do this, we define the \textit{small Temperley--Lieb category} $\bTL$ as the quotient linear category
\[
 \bTL := \tTL / \ker F_{\tTL}.
\]
In other words, objects of $\bTL$ are the same as objects of $\tTL$, while vector spaces of morphisms of $\bTL$ are quotients of vector spaces of morphisms of $\tTL$ with respect to kernels of linear maps determined by the linear functor $F_{\tTL}$.

\begin{lemma}\label{L:additional_relations}
 In $\bTL$ generating morphisms $p_{2r-1}^\varepsilon$ and $i_{2r-1}^\varepsilon$ satisfy
 \begin{equation}
  \etl{m_3r-m-2_a} \hspace*{-5pt} = \overline{\varepsilon} [r-1]! \cdot \hspace*{-5pt} \etl{m_3r-m-2_c} \label{E:cd}
 \end{equation}
 for every integer $0 \leqs m \leqs r$ and every $\varepsilon \in \{ +,- \}$, where $\overline{\varepsilon} \in \{ +,- \}$ denotes the opposite of $\varepsilon$, and they satisfy
 \begin{align}
  \etl{m_a} 
  &= (-1)^{m+1} \frac{\delta_{\varepsilon,\varepsilon'}}{[m+1]} \cdot \etl{h_m} \label{E:pi_ptr} \\* \nonumber \\*
  \etl{m_b} \hspace*{-37.5pt}
  &= (-1)^{m+1} \frac{\delta_{\varepsilon,\varepsilon'}}{[m+1]} \cdot \etl{h_m} \label{E:pi_cut}
 \end{align}
 for every integer $r \leqs m \leqs 2r-2$ and all $\varepsilon,\varepsilon' \in \{ +,- \}$.
\end{lemma}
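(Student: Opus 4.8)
The plan is to prove each family of relations by the same method used throughout Section~\ref{S:proof}: exploit that $\bTL$ is, by construction, a full subcategory of $\mods{\bar{U}}$ via $F_{\tTL}$, so it suffices to check the corresponding identities among $\bar{U}$-module morphisms. The relations in $\bTL$ are thus equivalent to identities in $\Hom_{\bar{U}}$ between the relevant morphisms; since these Hom-spaces are at most two-dimensional (see the dimension table in Section~\ref{S:small_quantum_group}), each identity will be pinned down by evaluating both sides on a single generating vector (a dominant vector $b_0^m$ or a highest-weight vector $a_0^{r-1}$, as appropriate).

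For equation~\eqref{E:cd}: the left-hand side is the image under $F_{\tTL}$ of $d_m^\varepsilon c_{3r-m-2}^{\overline\varepsilon}$ (up to the identification of $f_m$ or $g_m$ objects), and by Lemma~\ref{L:tilde_functor} this is a scalar multiple of $\gamma_{3r-m-2}^{\mp}\gamma_m^{\overline\varepsilon}$, which by relation~\eqref{E:epsilon_m_rel_1} equals $\varepsilon_m$ up to scalar; meanwhile the right-hand side is a scalar multiple of $\iota_m\pi_m$, again equal to $\varepsilon_m$ by~\eqref{E:epsilon_m_rel_1}. So the content is just matching the scalars, which one reads off from the explicit prefactors in Lemmas~\ref{L:functor} and~\ref{L:tilde_functor} (the factors $(-1)^m[m-r]!/[m+1]$ and $1/[m-r+1]!$ and their counterparts), together with the elementary identity $[m-r]!\,[m-r+1]!\cdot(\text{something}) = $ the claimed $[r-1]!$ when $3r-m-2$ is the complementary index; the case $m\leqs r$ makes $3r-m-2$ range in $[2r-2,2r-2+\ldots]$ appropriately, and the $m=r$ base case is exactly the computation at the end of the proof of Lemma~\ref{L:well-def}, while $0\leqs m<r$ follows by the same inductive fusion argument that proved $c_m^\varepsilon g_m = c_m^\varepsilon$.

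For equations~\eqref{E:pi_ptr} and~\eqref{E:pi_cut}: the left-hand sides are, respectively, the partial-trace-type closure and the ``cut'' of $i_{2r-1}^\varepsilon p_{2r-1}^{\varepsilon'}$ sandwiched between suitable Jones--Wenzl idempotents; under $F_{\tTL}$ these become $\bar{U}$-module endomorphisms of $P_m$, landing in the two-dimensional space $\End_{\bar{U}}(P_m) = \rmspan\{\id_{P_m},\varepsilon_m\}$. A parity/weight argument shows the $\id_{P_m}$ component vanishes (these morphisms factor through $X_{r-1}$, which is not a summand of $P_m$), so both sides are multiples of $\varepsilon_m = F_{\tTL}(h_m)\cdot(-[m+1])$ by~\eqref{E:F(h_m)}. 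The Kronecker delta $\delta_{\varepsilon,\varepsilon'}$ comes straight from~\eqref{E:pi_iota_rel_1}. The remaining task is to compute the single scalar: evaluate both sides on $b_0^m$ (or on $b_j^m$ for the ``cut'' version), using the explicit recursive bases~\eqref{E:P_m},~\eqref{E:X_m},~\eqref{E:tilde_P_m-2}, and the defining formulas for $\pi_{2r-1}^\varepsilon$, $\iota_{2r-1}^\varepsilon$ in~\eqref{E:pi_2r-1_def}--\eqref{E:iota_2r-1_def}; matching against $\varepsilon_m(b_0^m)=a_0^m$ yields the constant $(-1)^{m+1}/[m+1]$, with equation~\eqref{E:p_i_ptr} of Remark~\ref{R:p_i_ptr} handling the degenerate closure that appears when the partial trace wraps all the way around.

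I expect the main obstacle to be bookkeeping rather than conceptual: carefully tracking the accumulated scalar factors through the diagrammatic identities~\eqref{E:p_i_def}--\eqref{E:p_i_m} and~\eqref{E:c_d_def}--\eqref{E:c_d_m}, and through the recursive change of basis~\eqref{E:P_m} and~\eqref{E:tilde_P_m-2} when passing from $b_0^{m-1}$ to $b_0^m$, so that the final constants come out exactly as stated (in particular getting the sign $(-1)^{m+1}$ and the single power of $[m+1]$ right, and verifying that the $[r-1]!$ in~\eqref{E:cd} is independent of $m$ in the stated range). A secondary subtlety is justifying cleanly that the $\id_{P_m}$-component is forced to vanish; the cleanest route is to observe that each relevant morphism, being built from $p_{2r-1}^\varepsilon$ and $i_{2r-1}^\varepsilon$, has image contained in the radical of $P_m$ (equivalently factors through $\iota_m$), so cannot be invertible, hence lies in $\C\,\varepsilon_m$.
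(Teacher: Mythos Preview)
Your approach is correct and is essentially the same route the paper takes: verify the identities after applying $F_{\tTL}$, using the explicit formulas for $F_{\tTL}(c_m^\varepsilon)$, $F_{\tTL}(d_m^\varepsilon)$, $F_{\tTL}(p_m)$, $F_{\tTL}(i_m)$, $F_{\tTL}(h_m)$ together with the algebraic relations~\eqref{E:epsilon_m_rel_1}--\eqref{E:epsilon_m_rel_2} among $\gamma_m^\pm$, $\varepsilon_m$, $\iota_m$, $\pi_m$. The paper's own proof is a single sentence: ``The result follows directly from equations~\eqref{E:F(c+)} to \eqref{E:F(d-)}.''

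Where you overcomplicate things: the induction/fusion argument you sketch for~\eqref{E:cd}, and the radical argument plus evaluation on $b_0^m$ for~\eqref{E:pi_ptr}--\eqref{E:pi_cut}, are unnecessary. The diagrams on the left-hand sides are (variants of) compositions built from $c_m^\varepsilon$, $d_m^\varepsilon$, $p_m$, $i_m$, so their images under $F_{\tTL}$ are already computed by Lemmas~\ref{L:functor} and~\ref{L:tilde_functor}; you then just multiply the scalar prefactors and apply~\eqref{E:epsilon_m_rel_1}--\eqref{E:epsilon_m_rel_2}. No further basis computation or structural argument about $\End_{\bar U}(P_m)$ is required. (Also, watch your indexing in the first paragraph: the composition ``$d_m^\varepsilon c_{3r-m-2}^{\overline\varepsilon}$'' as written does not typecheck, and the stated range $0\leqs m\leqs r$ does not match the domain $r\leqs m\leqs 2r-2$ on which $c_m$, $d_m$ are defined; the correct interpretation of the diagrams sorts this out, but be careful when you write it down.)
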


\begin{proof}
 The result follows directly from equations~\eqref{E:F(c+)} to \eqref{E:F(d-)}.
\end{proof}

\begin{lemma}
 In $\bTL$ generating morphisms $p_{2r-1}^+$, $p_{2r-1}^-$, $i_{2r-1}^+$, $i_{2r-1}^-$ satisfy
 \begin{align}
  \sum_{\varepsilon \in \{ +,- \}} \hspace*{-15pt} \etl{m+2r-1_a_l} \hspace*{-20pt} &= \etl{m+2r-1_b_l} & \hspace*{-5pt} 
  \sum_{\varepsilon \in \{ +,- \}} \hspace*{-20pt} \etl{m+2r-1_a_r} \hspace*{-20pt} &= \etl{m+2r-1_b_r} \label{E:i_p_higher}
 \end{align}
 for every integer $0 \leqs m \leqs r$.
\end{lemma}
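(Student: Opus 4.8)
The plan is to derive equation~\eqref{E:i_p_higher} by comparing both sides under the functor $F_{\tTL}$, exploiting the fact that $\bTL = \tTL / \ker F_{\tTL}$, so it suffices to check equality of the images in $\mods{\bar{U}}$. The left-hand side is, up to the domination morphisms $t_m \otimes f_{2r-1}$ realizing $X^{\otimes m} \otimes f_{2r-1}$, a sum over $\varepsilon$ of $(i_{2r-1}^\varepsilon \otimes \id) \circ (p_{2r-1}^\varepsilon \otimes \id)$ sandwiched between the relevant cups/caps pushing $t_m$ and $f_{2r-1}$ together; under $F_{\tTL}$ this becomes $\sum_\varepsilon (\iota_{2r-1}^\varepsilon \otimes \id)(\pi_{2r-1}^\varepsilon \otimes \id)$ composed with the corresponding intertwiners. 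By relation~\eqref{E:pi_iota_rel_2}, $\sum_\varepsilon \iota_{2r-1}^\varepsilon \pi_{2r-1}^\varepsilon = \id_{X_{2r-1}}$, so the image of the left-hand side collapses to the image of the identity-like tangle on the right-hand side. The right-hand side of \eqref{E:i_p_higher} is exactly the tangle with $f_{2r-1}$ (equivalently $\varphi_{2r-1}^+ + \varphi_{2r-1}^-$ under $F_{\TL}$ by~\eqref{E:F(f_2r-1)}) inserted, so the two images agree.

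Concretely, first I would write both sides of \eqref{E:i_p_higher} as morphisms in $\tTL(m + (2r-1), \, \cdot)$ (or the appropriate source/target determined by the diagrams), noting that by \eqref{E:p_i} and the definitions of $p_{2r-1}^\varepsilon$, $i_{2r-1}^\varepsilon$ both sides factor through $f_{2r-1}$ tensored with $t_m$ on one side. Then I would apply $F_{\tTL}$: using Lemma~\ref{L:functor} (so $F_{\TL}(f_{2r-1}) = \varphi_{2r-1}^+ + \varphi_{2r-1}^-$), Lemma~\ref{L:well-def} (so $F_{\tTL}(p_{2r-1}^\varepsilon) = \pi_{2r-1}^\varepsilon$ and $F_{\tTL}(i_{2r-1}^\varepsilon) = \iota_{2r-1}^\varepsilon$), and relation~\eqref{E:pi_iota_rel_2}, the sum over $\varepsilon$ on the left becomes the image of the right-hand diagram. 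Since $\bTL$ is by definition the quotient killing $\ker F_{\tTL}$, the equality of images in $\mods{\bar{U}}$ immediately gives the equality in $\bTL$. The two displayed equations (left and right versions) are handled symmetrically, using the rigidity of $\tTL$ to pass between the "cup" and "cap" forms, much as in the passage from \eqref{E:p_i_def} to \eqref{E:p_i_m} and from \eqref{E:c_d_def} to \eqref{E:c_d_m}.

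The one point requiring a little care — and the main obstacle — is checking that the ancillary tangle data (the planar cups/caps and the idempotents $t_m$ that appear on both sides of \eqref{E:i_p_higher} to make the composition well-typed) really is \emph{identical} on the two sides, so that after inserting $\sum_\varepsilon \iota_{2r-1}^\varepsilon \pi_{2r-1}^\varepsilon = \id_{X_{2r-1}}$ nothing extra survives. This is essentially a bookkeeping verification: one must confirm that the diagram on the right of \eqref{E:i_p_higher} is obtained from the one on the left precisely by replacing the through-strands decorated by $\sum_\varepsilon i_{2r-1}^\varepsilon p_{2r-1}^\varepsilon$ with $f_{2r-1}$, which follows from \eqref{E:f_f} and \eqref{E:f_g} (absorption of Jones--Wenzl and non-semisimple Jones--Wenzl idempotents into adjacent identity strands) together with the already-established relations \eqref{E:p_i} and \eqref{E:i_p}. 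Granting that, the proof is a one-line invocation of \eqref{E:pi_iota_rel_2} after passing through $F_{\tTL}$.
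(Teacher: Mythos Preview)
Your overall strategy is sound: since $\bTL = \tTL/\ker F_{\tTL}$, equality in $\bTL$ is equivalent to equality of images under $F_{\tTL}$, and $\sum_\varepsilon \iota_{2r-1}^\varepsilon \pi_{2r-1}^\varepsilon = \id_{X_{2r-1}}$ is indeed the algebraic fact underlying the relation. This is a genuinely different route from the paper's proof, which stays entirely on the diagrammatic side and derives \eqref{E:i_p_higher} as a formal consequence of the relations \eqref{E:i_p}, \eqref{E:p_i_ambi}, and \eqref{E:cd} already established in $\bTL$. The paper's approach has the advantage of showing that \eqref{E:i_p_higher} is \emph{generated} by those three relations --- information relevant to the open question, raised in the introduction, of finding a complete presentation of $\bTL$ --- whereas your semantic approach is agnostic about which relations are doing the work but is conceptually shorter.

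That said, your execution has a gap that matters. You assume the two sides of \eqref{E:i_p_higher} differ only by replacing $\sum_\varepsilon i_{2r-1}^\varepsilon p_{2r-1}^\varepsilon$ with $f_{2r-1}$ inside identical ambient tangles, and then invoke \eqref{E:pi_iota_rel_2}. But if that were literally the structure, the equality would already hold in $\tTL$ by the defining relation \eqref{E:i_p}, and there would be no need to pass to $\bTL$ at all. The paper's appeal to \eqref{E:p_i_ambi} and especially to \eqref{E:cd} --- a relation that holds only in the quotient --- signals that the diagrams in \eqref{E:i_p_higher} involve extra cups and caps threading the $p_{2r-1}^\varepsilon/i_{2r-1}^\varepsilon$ strands through adjacent idempotents in a way that does not reduce to \eqref{E:i_p} alone. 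Your semantic approach can still be made to work, but the ``bookkeeping verification'' you flag is not the triviality you suggest: it requires tracking how $\pi_{2r-1}^\varepsilon$ and $\iota_{2r-1}^\varepsilon$ interact with the specific intertwiners coming from those extra cups and caps, for which you would need the explicit formulas of Appendix~\ref{A:computations} (in particular Lemma~\ref{L:computation}) rather than just \eqref{E:pi_iota_rel_2}.
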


\begin{proof}
 The result follows from a straightforward computation using equations~\eqref{E:i_p}, \eqref{E:p_i_ambi}, and \eqref{E:cd}.
\end{proof}

\appendix

\section{Computations}\label{A:computations}

In this appendix, we provide computations to be used in Sections~\ref{S:extended_TL_category} and \ref{S:additional_relations}. We start with the identity
\begin{equation}
 [r-j-1]! = (-1)^j \frac{[r-1]!}{[j]!}. \label{E:reversal}
\end{equation}

\begin{lemma}
 For all integers $0 \leqs j \leqs r-1$ we have
 \begin{align}
  \begin{split}
   \Delta(E^j) &= \sum_{k=0}^j \frac{[j]!}{[k]![j-k]!} q^{k(j-k)} E^{j-k} \otimes E^k K^{j-k}, \\
   \Delta(F^j) &= \sum_{k=0}^j \frac{[j]!}{[k]![j-k]!} q^{k(j-k)} F^k K^{-j+k} \otimes F^{j-k}.
  \end{split} \label{E:coproduct}
 \end{align}
\end{lemma}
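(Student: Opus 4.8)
The plan is to prove both identities by induction on $j$, using nothing beyond the fact that $\Delta$ is an algebra homomorphism together with the commutation relations of $\bar{U}$. Throughout the argument all quantum factorials $[k]!$ with $0 \leqs k \leqs r-1$ are invertible in $\C$, so the right-hand sides are well defined; the hypothesis $j \leqs r-1$ serves precisely to avoid the vanishing $[r]!$.

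For the first identity, the base case $j=0$ reads $\Delta(1) = 1 \otimes 1$ and is immediate (the case $j=1$ recovers the defining formula for $\Delta(E)$). For the inductive step I would assume the formula for some $0 \leqs j \leqs r-2$ and compute
\[
 \Delta(E^{j+1}) = \Delta(E^j)\,\Delta(E) = \Delta(E^j)\,(E \otimes K + 1 \otimes E),
\]
expanding the product and pushing the $K$-powers through the $E$-powers in the right-hand tensor slot via $K^n E = q^{2n} E K^n$, which follows from $KEK^{-1} = q^2 E$. A short reindexing then yields
\[
 \Delta(E^{j+1}) = \sum_{k=0}^{j+1} \big( c_{j,k} + q^{2(j+1-k)} c_{j,k-1} \big)\, E^{j+1-k} \otimes E^k K^{j+1-k},
\]
where $c_{j,k} := \tfrac{[j]!}{[k]![j-k]!} q^{k(j-k)}$ and $c_{j,-1} := c_{j,j+1} := 0$. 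It then only remains to verify the recurrence $c_{j+1,k} = c_{j,k} + q^{2(j+1-k)} c_{j,k-1}$, which after clearing denominators collapses to the elementary identity $[a+b] = q^b[a] + q^{-a}[b]$ with $a=k$, $b=j+1-k$. This closes the induction.

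The second identity would be handled by the identical argument, now starting from $\Delta(F^{j+1}) = \Delta(F^j)\,(K^{-1} \otimes F + F \otimes 1)$ and using $K^n F = q^{-2n} F K^n$; the $q$-power bookkeeping again reduces to the same identity $[a+b] = q^b[a] + q^{-a}[b]$. Alternatively, I would deduce it directly from the first one by applying $\tau \circ (\sigma \otimes \sigma)$, where $\tau$ flips the tensor factors and $\sigma$ is the $\C$-algebra automorphism of $\bar{U}$ with $\sigma(E) = F$, $\sigma(F) = E$, $\sigma(K) = K^{-1}$: checking on generators that $(\sigma \otimes \sigma) \circ \Delta = \tau \circ \Delta \circ \sigma$ gives $\Delta(F^j) = \tau\,(\sigma \otimes \sigma)\,\Delta(E^j)$, and since $\sigma(K^{j-k}) = K^{-j+k}$ the right-hand side of the first formula is carried exactly onto that of the second.

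The one genuinely delicate point is the $q$-power accounting in the inductive step: one must track precisely the exponent acquired when commuting $K^{j-k}$ past $E$ and check that, together with the exponent $k(j-k)$ already carried by $c_{j,k-1}$, it produces $k(j+1-k)$ after invoking the Pascal-type identity. Everything else is formal manipulation, so this is where any sign or exponent slip would do its damage.
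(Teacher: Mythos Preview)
Your proof is correct and is precisely the ``direct computation'' the paper alludes to without spelling out: the paper's proof of this lemma consists of the single sentence ``The statement follows from a direct computation.'' Your induction on $j$ using $\Delta(E^{j+1}) = \Delta(E^j)\Delta(E)$, the commutation $K^{j-k}E = q^{2(j-k)}EK^{j-k}$, and the $q$-Pascal identity $[j+1] = q^{j+1-k}[k] + q^{-k}[j+1-k]$ is the standard way to carry this out, and both your direct argument for $F$ and the symmetry argument via $\sigma$ are valid.
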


\begin{proof}
 The statement follows from a direct computation.
\end{proof}

Next, let us fix some notation. For all integers $0 \leqs j \leqs m$, let us set
\[
 V_j^m := \rmspan_\C \{ a_{j_1}^1 \otimes \ldots \otimes a_{j_m}^1 \in X^{\otimes m} \mid j_1 + \ldots + j_m = j \},
\]
and let us give special names to highest and lowest weight vectors
\[
 v_0^m := a_0^1 \otimes \ldots \otimes a_0^1 \in V_0^m, \quad
 v_m^m := a_1^1 \otimes \ldots \otimes a_1^1 \in V_m^m.
\]

\begin{lemma}
 For every integer $0 \leqs m \leqs r-1$ we have
 \begin{align}
  a_0^m &= v_0^m, & a_m^m &= [m]! v_m^m, \label{E:v_0_m_simple}
 \end{align}
 for every integer $r \leqs m \leqs 2r-2$ we have
 \begin{align}
  x_0^m &= v_0^m, & y_{m-r}^m &= \frac{[m-r]![r-1]!}{[m+1]} v_m^m, \label{E:v_0_m_projective}
 \end{align}
 and for $m = 2r-1$ we have
 \begin{align}
  a_0^{r-1,+} &= v_0^{2r-1}, & a_{r-1}^{r-1,-} &= [r-1]!^2 v_{2r-1}^{2r-1}. \label{E:v_0_m_2r-1}
 \end{align}
\end{lemma}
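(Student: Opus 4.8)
The plan is to verify each of the three pairs of identities in \eqref{E:v_0_m_simple}, \eqref{E:v_0_m_projective}, \eqref{E:v_0_m_2r-1} by a direct induction on $m$, unwinding the recursive definitions of the standard bases given in \eqref{E:X_m}, \eqref{E:P_r}, \eqref{E:P_m}, \eqref{E:X_r-1_+_-} together with the coproduct formula \eqref{E:coproduct}. The point is that each highest weight vector is, by construction, a tensor of the form $v_0^{m-1}\otimes v_0^1$ (possibly up to the inductive step), while each lowest weight vector is obtained by applying $F^{\text{(top index)}}$ to a highest weight vector; so the content of the lemma is really a bookkeeping of the scalars produced by the action of $F^j$ via \eqref{E:coproduct}.

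First I would dispose of the highest weight identities. For $a_0^m=v_0^m$ with $0\leqs m\leqs r-1$: the base cases $m=0,1$ are immediate from $a_0^0:=1$ and $a_0^1$, and the inductive step is $a_0^m=a_0^{m-1}\otimes a_0^1=v_0^{m-1}\otimes a_0^1=v_0^m$ using \eqref{E:X_m}. The same telescoping works for $x_0^m=v_0^m$ ($r\leqs m\leqs 2r-2$), using $x_0^r=a_0^{r-1}\otimes a_0^1$ from \eqref{E:P_r} and $x_k^m=F^k\cdot(x_0^{m-1}\otimes a_0^1)$, hence $x_0^m=x_0^{m-1}\otimes a_0^1=v_0^m$; and likewise $a_0^{r-1,+}=x_0^{2r-2}\otimes a_0^1=v_0^{2r-1}$ from \eqref{E:X_r-1_+_-}.

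Next, the lowest weight identities. Here the key computational input is that, for a highest weight vector $w$ in a copy of $X_n$, $F^n\cdot w$ equals the lowest weight vector of that copy, and more concretely $F^m\cdot v_0^m$ is a sum of all weight-$m$ tensors with coefficients coming from \eqref{E:coproduct}; a clean way to package this is the identity $F^m\cdot v_0^m=[m]!\,v_m^m$, which one checks by the same induction as $a_m^m=F\cdot a_{m-1}^m$ combined with $\Delta(F)=K^{-1}\otimes F+F\otimes 1$ and $K^{-1}v_j^m=q^{\cdots}v_j^m$ on weight spaces, or directly from the second line of \eqref{E:coproduct}. Granting this, $a_m^m=F^m\cdot a_0^m=F^m\cdot v_0^m=[m]!v_m^m$, which is \eqref{E:v_0_m_simple}. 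For $y_{m-r}^m$ in \eqref{E:v_0_m_projective}: trace through \eqref{E:P_r} and \eqref{E:P_m} to write $y_0^r=a_{r-1}^{r-1}\otimes a_1^1=[r-1]!\,v_{r-1}^{r-1}\otimes a_1^1=\tfrac{[r-1]!}{[r+1]}\cdot[r+1]\cdot v_r^r$ — matching the claimed constant at $m=r$ since $[0]!=1$ — and then run the induction $y_k^m=F^k\cdot(\text{linear combination from }\eqref{E:P_m})$, noting that only the term ending in $\otimes a_1^1$ survives when landing on the lowest weight vector, picking up the factor $\tfrac{[m]}{[m+1]}$ at each step together with a $[m-r]$-type factor from applying $F$ across the new tensor slot; the recursion $\tfrac{[m-r]![r-1]!}{[m+1]}$ is then exactly what \eqref{E:P_m} produces. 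Finally, for \eqref{E:v_0_m_2r-1}, apply $F^{r-1}$ to $a_0^{r-1,-}=q\,a_0^{2r-2}\otimes a_1^1-y_0^{2r-2}\otimes a_0^1$ from \eqref{E:X_r-1_+_-}: using \eqref{E:v_0_m_projective} for $m=2r-2$ (so $y_0^{2r-2}=\tfrac{[r-2]![r-1]!}{[2r-1]}v_{2r-2}^{2r-2}$, recalling $P_{2r-2}$ has $m-r=r-2$), and that $F^{r-1}$ hits the lowest weight vector only through the appropriate term, the scalars collapse — via \eqref{E:reversal} applied to $[r-2]!$ or a direct $q$-number manipulation — to $[r-1]!^2$.

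The main obstacle I expect is the $y_{m-r}^m$ computation in \eqref{E:v_0_m_projective} and the resulting constant in \eqref{E:v_0_m_2r-1}: one must carefully track which monomials in the multi-term expressions of \eqref{E:P_m} and \eqref{E:X_r-1_+_-} actually contribute to the lowest weight vector after applying a high power of $F$, and then verify that the accumulated product of $q$-integers telescopes correctly — this is where \eqref{E:reversal} and the $[m-r+1]!$ versus $[r-1]!$ cancellations enter, and it is easy to drop a sign or a power of $q$. Everything else is a routine unwinding of definitions, so in the write-up I would state the auxiliary fact $F^m\cdot v_0^m=[m]!\,v_m^m$ explicitly (it follows from \eqref{E:coproduct}), and then present the three inductions compactly, only spelling out the surviving terms at each recursive step.
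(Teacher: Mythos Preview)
Your approach is correct and coincides with the paper's, which records only that ``the statement can be proved by induction on $m$''; you have simply filled in the details of that induction.

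One small expository slip: in the inductive step for $y_{m-r}^m$, you write that ``only the term ending in $\otimes a_1^1$ survives'', but in the expression from \eqref{E:P_m} it is the term $\tfrac{[m]}{[m+1]}\,y_0^{m-1}\otimes a_0^1$ (ending in $a_0^1$) that contributes after applying $F^{m-r}$, while the term $\tfrac{q^{-m}}{[m+1]}\,a_{2r-m-1}^{m-1}\otimes a_1^1$ is annihilated since $F$ kills both $a_{2r-m-1}^{m-1}$ and $a_1^1$. Your identification of the factors $\tfrac{[m]}{[m+1]}$ and $[m-r]$ shows you are tracking the correct term; just adjust the wording when you write it up.
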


\begin{proof}
 The statement can be proved by induction on $m$.
\end{proof}

\begin{lemma}
 For all integers $0 \leqs m \leqs r-1$ and $0 \leqs j \leqs m$ we have
 \begin{align}
  \begin{split}
   E^j \cdot a_j^m &= \frac{[m]![j]!}{[m-j]!} a_0^m, \\
   F^{m-j} \cdot a_j^m &= a_m^m,
  \end{split} \label{E:highest_lowest_simple}
 \end{align}
 for all integers $r \leqs m \leqs 2r-2$, $0 \leqs k \leqs m-r$, and $0 \leqs \ell \leqs 2r-m-2$ we have
 \begin{align}
  \begin{split}
   E^k \cdot x_k^m &= \frac{[m-r]![k]!}{[m-k-r]!} x_0^m, \\
   E^{m+\ell-r+1} \cdot b_\ell^m &=(-1)^\ell \frac{[m-r]![m+\ell-r+1]![\ell]!}{[m+1]} x_0^m, \\
   F^{m-k-r} \cdot y_k^m &= y_{m-r}^m, \\
   F^{r-\ell-1} \cdot b_\ell^m &= y_{m-r}^m,
  \end{split} \label{E:highest_lowest_projective}
 \end{align}
 and for every integer $0 \leqs j \leqs r-1$ we have
 \begin{align}
  \begin{split}
   E^j \cdot a_j^{r-1,+} &= (-1)^j [j]!^2 a_0^{r-1,+}, \\
   F^{r-j-1} \cdot a_j^{r-1,-} &= a_{r-1}^{r-1,-}.
  \end{split} \label{E:highest_lowest_2r-1}
 \end{align}
\end{lemma}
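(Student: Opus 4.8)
The plan is to verify every identity by directly iterating the $E$- and $F$-actions recorded in Section~\ref{S:small_quantum_group}. All the claimed equalities are intrinsic to the abstract modules $X_m$, $P_m$, and $X_{r-1}^\pm$ (they involve only the module structure, not the realization inside tensor powers), so the argument is purely computational. It is convenient to separate the $F$-identities, which amount to index bookkeeping along the distinguished chains, from the $E$-identities, which additionally require some quantum-integer arithmetic and, in one case, a little more care.

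For the $F$-identities, observe that $F$ is a pure shift: $F\cdot a_j^m=a_{j+1}^m$ on $X_m$ (with $a_{m+1}^m=0$), $F\cdot y_k^m=y_{k+1}^m$ on $P_m$, $F\cdot a_j^{r-1,-}=a_{j+1}^{r-1,-}$ on $X_{r-1}^-$, and along the $b$-chain $F\cdot b_j^m=b_{j+1}^m$ for $j<2r-m-2$ while $F\cdot b_{2r-m-2}^m=y_0^m$ drops onto the $y$-chain. Hence $F^{m-j}\cdot a_j^m=a_m^m$, $F^{m-k-r}\cdot y_k^m=y_{m-r}^m$, and $F^{r-j-1}\cdot a_j^{r-1,-}=a_{r-1}^{r-1,-}$ are immediate, and $F^{r-\ell-1}\cdot b_\ell^m=y_{m-r}^m$ follows by counting the $(2r-m-2-\ell)+1$ steps down the $b$-chain to reach $y_0^m$ and then $m-r$ further steps along the $y$-chain, for a total of $r-\ell-1$. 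For the "easy" $E$-identities, $E$ is a scalar times a shift down, so iterating gives a product of scalars: on $X_m$, $E^j\cdot a_j^m=\big(\prod_{i=1}^{j}[i][m-i+1]\big)a_0^m=\tfrac{[m]![j]!}{[m-j]!}a_0^m$; on $P_m$, $E^k\cdot x_k^m=\big(\prod_{i=1}^{k}[i][m-i+1]\big)x_0^m$, which equals $\tfrac{[m-r]![k]!}{[m-k-r]!}x_0^m$ once one uses the periodicity $[n+r]=[n]$ to rewrite $[m][m-1]\cdots[m-k+1]=[m-r][m-r-1]\cdots[m-k+1-r]$ (all these reduced indices lie in $1,\dots,r-1$ since $k\leqs m-r$ and $r\leqs m\leqs 2r-2$); and on $X_{r-1}^+$, $E^j\cdot a_j^{r-1,+}=[j]!\big([r-1][r-2]\cdots[r-j]\big)a_0^{r-1,+}=[j]!\tfrac{[r-1]!}{[r-j-1]!}a_0^{r-1,+}$, which is $(-1)^j[j]!^2 a_0^{r-1,+}$ by the reversal identity~\eqref{E:reversal}.

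The only delicate point is $E^{m+\ell-r+1}\cdot b_\ell^m$, where the recursion $E\cdot b_j^m=a_{j-1}^m-[j][m+j+1]b_{j-1}^m$ (for $j>0$) mixes the $b$-chain with the $a$-chain before $E\cdot b_0^m=x_{m-r}^m$ drops onto the $x$-chain. I would argue as follows: after $t\leqs\ell$ applications of $E$ the unique surviving $b$-term is $\big(\prod_{i=\ell-t+1}^{\ell}(-[i][m+i+1])\big)b_{\ell-t}^m$, while each $a_p^m$ ($0\leqs p<\ell$) spawned off the $b$-chain is created exactly at step $\ell-p$ and is annihilated after $p+1$ further applications (as $E^{p+1}\cdot a_p^m=0$), hence by step $\ell+1$. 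Therefore $E^{\ell+1}\cdot b_\ell^m=(-1)^{\ell}[\ell]!\big(\prod_{i=1}^{\ell}[m+i+1]\big)x_{m-r}^m$, and applying the already-established identity $E^{m-r}\cdot x_{m-r}^m=[m-r]!^2 x_0^m$ (the $x_k^m$-case above with $k=m-r$, using $[0]!=1$) gives $E^{m+\ell-r+1}\cdot b_\ell^m=(-1)^{\ell}[\ell]!\big(\prod_{i=1}^{\ell}[m+i+1]\big)[m-r]!^2 x_0^m$. It remains to check $\big(\prod_{i=1}^{\ell}[m+i+1]\big)[m-r]!=\tfrac{[m+\ell-r+1]!}{[m+1]}$; by periodicity this reduces to $\prod_{i=1}^{\ell}[m+i+1]=\tfrac{[m+\ell+1-r]!}{[m+1-r]!}$ together with $[m+1-r]=[m+1]$, both valid in the stated range. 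I expect this last computation — keeping the $a$-terms under control and tracking the coefficient through the $b$-, $x$-, and $a$-chains, together with the quantum-integer reindexings — to be the main obstacle; every other identity is routine.
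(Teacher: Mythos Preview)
Your proposal is correct and takes essentially the same approach as the paper: direct iteration of the $E$- and $F$-actions from Section~\ref{S:small_quantum_group}, together with the periodicity $[n+r]=[n]$ and the reversal identity~\eqref{E:reversal}. The paper's own proof is a one-liner (``follows from equations~\eqref{E:reversal} to \eqref{E:v_0_m_2r-1}''), so your detailed verification---especially the careful tracking of the spawned $a$-terms in the $E^{m+\ell-r+1}\cdot b_\ell^m$ computation and the quantum-integer reindexing at the end---is considerably more explicit but in the same spirit.
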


\begin{proof}
 The statement follows from equations~\eqref{E:reversal} to \eqref{E:v_0_m_2r-1}.
\end{proof}

Next, we recall the definition, for every integer $0 \leqs m \leqs r-1$, of the idempotent endomorphism $\varphi_m \in \End_{\bar{U}}(X^{\otimes m})$ associated with the decomposition specified in equation~\eqref{E:decomposition_simple}, and similarly, for $m = 2r-1$, of the idempotent endomorphisms $\varphi_{2r-1}^+,\varphi_{2r-1}^- \in \End_{\bar{U}}(X^{\otimes 2r-1})$ associated with the decomposition specified in equation~\eqref{E:decomposition_2r-1}. For every integer $0 \leqs j \leqs m \leqs r-1$, we have
\[
 \ker \varphi_m \cap V_j^m = \ker E^j \cap V_j^m, \quad
 \ker \varphi_m \cap V_{m-j}^m = \ker F^j \cap V_{m-j}^m,
\]
and similarly, for every integer $0 \leqs j \leqs m = 2r-1$, we have
\[
 \ker \varphi_{2r-1}^+ \cap V_j^{2r-1} = \ker E^j \cap V_j^{2r-1}, \quad
 \ker \varphi_{2r-1}^- \cap V_{2r-j-1}^{2r-1} = \ker F^j \cap V_{2r-j-1}^{2r-1}.
\]

\begin{lemma}\label{L:computation}
 For all integers $0 \leqs j \leqs m \leqs r-1$ and $0 \leqs k \leqs r-m-1$ we have
 \begin{align}
  \begin{split}
   \varphi_{r-1}(a_j^m \otimes a_k^{r-m-1}) &= (-1)^j \frac{[m+k]!}{[m-j]![j+k]!} q^{-j(m+k+1)} a_{j+k}^{r-1}, \\
   \varphi_{r-1}(a_k^{r-m-1} \otimes a_j^m) &= (-1)^j \frac{[m+k]!}{[m-j]![j+k]!} q^{(m-j)k} a_{j+k}^{r-1},
  \end{split} \label{E:alpha_r-1}
 \end{align}
 for every integer $0 \leqs \ell \leqs m-j-1$ we have
 \begin{align}
  \begin{split}
   \varphi_{2r-1}^+(a_j^m \otimes a_\ell^{2r-m-1}) &= 0, \\
   \varphi_{2r-1}^+(a_j^m \otimes x_k^{2r-m-1}) &= (-1)^j \frac{[m+k]!}{[m-j]![j+k]!} q^{-j(m+k+1)} a_{j+k}^{r-1,+}, \\
   \varphi_{2r-1}^+(a_j^m \otimes y_k^{2r-m-1}) &= 0, \\
   \varphi_{2r-1}^+(a_j^m \otimes b_\ell^{2r-m-1}) &= (-1)^{m+\ell+1} \frac{[m-j-\ell-1]![\ell]!}{[m-j]![m]} q^{-j(\ell+1)} a_{j+\ell-m+r}^{r-1,+}, \\
   \varphi_{2r-1}^+(a_\ell^{2r-m-1} \otimes a_j^m) &= 0, \\
   \varphi_{2r-1}^+(x_k^{2r-m-1} \otimes a_j^m) &= (-1)^j \frac{[m+k]!}{[m-j]![j+k]!} q^{(m-j)k} a_{j+k}^{r-1,+}, \\
   \varphi_{2r-1}^+(y_k^{2r-m-1} \otimes a_j^m) &= 0, \\
   \varphi_{2r-1}^+(b_\ell^{2r-m-1} \otimes a_j^m) &= (-1)^{m+\ell+1} \frac{[m-j-\ell-1]![\ell]!}{[m-j]![m]} q^{-(m-j)(m-\ell)} \mathrlap{a_{j+\ell-m+r}^{r-1,+},}
  \end{split} \label{E:alpha_2r-1_+}
 \end{align}
 and for every integer $m-j \leqs \ell \leqs m-1$ we have
 \begin{align}
  \begin{split}
   \varphi_{2r-1}^-(a_j^m \otimes a_\ell^{2r-m-1}) &= 0, \\
   \varphi_{2r-1}^-(a_j^m \otimes x_k^{2r-m-1}) &= 0, \\
   \varphi_{2r-1}^-(a_j^m \otimes y_k^{2r-m-1}) &= (-1)^{j+1} \frac{[m+k]!}{[m-j]![j+k]![m]} q^{-j(m+k+1)} a_{j+k}^{r-1,-}, \\
   \varphi_{2r-1}^-(a_j^m \otimes b_\ell^{2r-m-1}) &= (-1)^{j+1} \frac{[\ell]!}{[m-j]![j+\ell-m]![m]} q^{-j(\ell+1)} a_{j+\ell-m}^{r-1,-}, \\
   \varphi_{2r-1}^-(a_\ell^{2r-m-1} \otimes a_j^m) &= 0,\\
   \varphi_{2r-1}^-(x_k^{2r-m-1} \otimes a_j^m) &= 0, \\
   \varphi_{2r-1}^-(y_k^{2r-m-1} \otimes a_j^m) &= (-1)^{j+1} \frac{[m+k]!}{[m-j]![j+k]![m]} q^{(m-j)k} a_{j+k}^{r-1,-}, \\
   \varphi_{2r-1}^-(b_\ell^{2r-m-1} \otimes a_j^m) &= (-1)^{j+1} \frac{[\ell]!}{[m-j]![j+\ell-m]![m]} q^{-(m-j)(m-\ell)} \mathrlap{a_{j+\ell-m}^{r-1,+}.}
  \end{split} \label{E:alpha_2r-1_-}
 \end{align}
\end{lemma}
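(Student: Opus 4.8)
The plan is to establish all the identities in~\eqref{E:alpha_r-1}, \eqref{E:alpha_2r-1_+}, and~\eqref{E:alpha_2r-1_-} by a single uniform method, carrying out one representative case in full and leaving the parallel ones to the reader. The first thing to do is the weight bookkeeping. Using the recursive definitions~\eqref{E:X_m}, \eqref{E:P_r}, \eqref{E:P_m}, and \eqref{E:X_r-1_+_-}, one checks by induction that, writing $m' := 2r-m-1$, the basis vectors $x_k^{m'}$, $a_j^{m'}$, $b_j^{m'}$, $y_k^{m'}$ of $P_{m'} \subset X^{\otimes m'}$ lie respectively in $V_k^{m'}$, $V_{j-m+r}^{m'}$, $V_{j-m+r}^{m'}$, $V_{r+k}^{m'}$, that $a_j^m \in V_j^m$, and that $a_n^{r-1,+} \in V_n^{2r-1}$ while $a_n^{r-1,-} \in V_{r+n}^{2r-1}$; this is consistent with~\eqref{E:v_0_m_simple}, \eqref{E:v_0_m_projective}, and \eqref{E:v_0_m_2r-1}. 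Consequently each vector on the left-hand side of the asserted identities sits in a definite weight space $V_N$, and since the weight spaces of $X_{r-1}$, $X_{r-1}^+$, $X_{r-1}^-$ are at most one-dimensional, its image under the relevant idempotent is forced to be a scalar multiple of the unique basis vector of matching weight, and to vanish whenever no such basis vector exists. This last observation disposes of all the ``$=0$'' lines at once: for instance $a_j^m \otimes y_k^{2r-m-1} \in V_{j+r+k}^{2r-1}$ with $j+r+k > r-1$, so it has no component in $X_{r-1}^+$; and $a_j^m \otimes b_\ell^{2r-m-1} \in V_{j+\ell-m+r}^{2r-1}$, whose weight lies in the range of $X_{r-1}^+$ exactly when $\ell \leqs m-j-1$ and in the range of $X_{r-1}^-$ exactly when $\ell \geqs m-j$, which is precisely the case split made in the statement.

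To pin down the surviving scalars I would invoke the kernel descriptions recalled just before the statement, namely $\ker \varphi_{r-1} \cap V_N^{r-1} = \ker E^N \cap V_N^{r-1}$, its analogue $\ker \varphi_{2r-1}^+ \cap V_N^{2r-1} = \ker E^N \cap V_N^{2r-1}$, and for $\varphi_{2r-1}^-$ the kernel of the appropriate power of $F$ on the low-weight side. Thus, writing say $\varphi_{r-1}(a_j^m \otimes a_k^{r-m-1}) = c\, a_{j+k}^{r-1}$, applying $E^{j+k}$ annihilates the kernel part and, by~\eqref{E:highest_lowest_simple}, sends $a_{j+k}^{r-1}$ to a known multiple of $a_0^{r-1} = v_0^{r-1}$; on the other hand $E^{j+k} \cdot (a_j^m \otimes a_k^{r-m-1})$ is computed from the coproduct formula~\eqref{E:coproduct} split along $X^{\otimes m} \otimes X^{\otimes (r-m-1)}$, where the bounds in~\eqref{E:highest_lowest_simple}, \eqref{E:highest_lowest_projective}, and \eqref{E:highest_lowest_2r-1} on how far $E$ (resp.\ $F$) can raise (resp.\ lower) each tensor factor before killing it force exactly one term of the sum to survive. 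Evaluating that term again with~\eqref{E:highest_lowest_simple}, \eqref{E:highest_lowest_projective}, \eqref{E:highest_lowest_2r-1} and identifying it with a scalar times $v_0^{2r-1}$ (resp.\ $v_{2r-1}^{2r-1}$) via~\eqref{E:v_0_m_simple}, \eqref{E:v_0_m_projective}, \eqref{E:v_0_m_2r-1}, one reads off $c$ as a ratio of quantum factorials times a power of $q$. Comparing and then simplifying with $q^r = 1$ (which collapses the $q$-exponent to the stated form and yields identities such as $[2r-m] = -[m]$) and with the reversal identity~\eqref{E:reversal} (which rewrites factorials of shape $[r-k-1]!$) produces the closed forms as stated; the two orderings $a_j^m\otimes a_k^{r-m-1}$ and $a_k^{r-m-1}\otimes a_j^m$ differ only by which tensor factor carries the $K$-twist from~\eqref{E:coproduct}, accounting for the two different powers of $q$.

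I expect the main obstacle to be organizational rather than conceptual: the weight bookkeeping of the first step must be done carefully, because the $a$- and $b$-vectors of $P_{2r-m-1}$ occupy a \emph{shifted} block of weight spaces $V_{r-m}^{2r-m-1}, \dots, V_{r-1}^{2r-m-1}$ while the $x$-vectors sit at the top and the $y$-vectors at the bottom, and misplacing them would produce the wrong basis vectors on the right-hand side. The $b_\ell$-cases are also the most laborious to carry through, since there the single surviving term must be evaluated along the composite raising (resp.\ lowering) path of~\eqref{E:highest_lowest_projective}, which is what introduces the extra sign $(-1)^\ell$ and the factor $1/[m]$ (obtained from $[2r-m]=-[m]$) appearing in~\eqref{E:alpha_2r-1_+} and~\eqref{E:alpha_2r-1_-}. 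Apart from these bookkeeping points every remaining case runs in exact parallel with the one worked out, so I would present only the representative computation in detail.
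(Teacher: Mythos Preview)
Your approach is essentially the paper's: invoke the kernel descriptions $\ker \varphi \cap V_N = \ker E^N \cap V_N$ (respectively with $F$ for $\varphi_{2r-1}^-$), apply the appropriate power of $E$ or $F$ to both the tensor and the target basis vector using the coproduct~\eqref{E:coproduct} together with~\eqref{E:highest_lowest_simple}--\eqref{E:highest_lowest_2r-1}, observe that a single term survives, and read off the coefficient after simplifying with~\eqref{E:reversal}. The paper does exactly this, case by case, without your preliminary weight-space narrative.

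One point needs correcting. Your weight-space shortcut does \emph{not} dispose of all the zero lines. For $0 \leqs \ell \leqs m-j-1$ the vector $a_j^m \otimes a_\ell^{2r-m-1}$ lies in $V_{j+\ell+r-m}^{2r-1}$, whose index ranges over $r-m, \ldots, r-1$ and hence \emph{does} meet $X_{r-1}^+$; the dual remark applies to $\varphi_{2r-1}^-$ when $m-j \leqs \ell \leqs m-1$. So the vanishing of $\varphi_{2r-1}^\pm$ on the $a_\ell$-vectors cannot be read off from weights alone. These cases still require your main method: applying $E^{j+\ell+r-m}$ via the coproduct, one finds the constraints on the summation index $s$ coming from the two tensor factors (namely $s \geqs \ell + r - m$ from $E^{\cdot} a_j^m$ and $s \leqs \ell$ from $E^s a_\ell^{2r-m-1}$) are incompatible since $m < r$, so no term survives. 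This is precisely what the paper verifies. Your weight argument is valid for the $x_k$- and $y_k$-lines, where the index genuinely falls outside the range of the target simple.
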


\begin{proof}
 For all integers $0 \leqs m+n \leqs r-1$, $0 \leqs j \leqs m$ and $0 \leqs k \leqs n$, thanks to equation~\eqref{E:highest_lowest_simple}, we have
 \begin{align*}
  \begin{split}
   E^{j+k} \cdot a_{j+k}^{m+n} &= \frac{[m+n]![j+k]!}{[m+n-j-k]!} a_0^{m+n}, \\
   F^{m+n-j-k} \cdot a_{j+k}^{m+n} &= a_{m+n}^{m+n},
  \end{split}
 \end{align*}
 and, thanks to equations~\eqref{E:coproduct} and \eqref{E:v_0_m_simple}, we have
 \begin{align*}
  \begin{split}
   E^{j+k} \cdot a_j^m \otimes a_k^n &= \frac{[m]![n]![j+k]!}{[m-j]![n-k]!} q^{j(n-k)} a_0^{m+n}, \\
   F^{m+n-j-k} \cdot a_j^m \otimes a_k^n &= \frac{[m]![n]![m+n-j-k]!}{[m+n]![m-j]![n-k]!} q^{j(n-k)} a_{m+n}^{m+n}.
  \end{split}
 \end{align*}
 This means
 \[
  \varphi_{m+n}(a_j^m \otimes a_k^n) = \frac{[m]![n]![m+n-j-k]!}{[m+n]![m-j]![n-k]!} q^{j(n-k)} a_{j+k}^{m+n},
 \]
 which implies in particular equation~\eqref{E:alpha_r-1} thanks to equation~\eqref{E:reversal}. For all integers $0 \leqs j \leqs m \leqs r-1$, $0 \leqs k \leqs r-m-1$, and $0 \leqs \ell \leqs m-j-1$, thanks to equation~\eqref{E:highest_lowest_2r-1}, we have
 \begin{align*}
  \begin{split}
   E^{j+k} \cdot a_{j+k}^{r-1,+} &= (-1)^{j+k} [j+k]!^2 a_0^{r-1,+}, \\
   E^{j+\ell-m+r} \cdot a_{j+\ell-m+r}^{r-1,+} &= (-1)^{m+j+\ell+1} \frac{[r-1]!^2}{[m-j-\ell-1]!^2} a_0^{r-1,+},
  \end{split}
 \end{align*}
 and, thanks to equations~\eqref{E:coproduct} to \eqref{E:v_0_m_2r-1}, we have
 \begin{align*}
  \begin{split}
   E^{j+\ell-m+r} \cdot a_j^m \otimes a_\ell^{2r-m-1} &= 0, \\
   E^{j+k} \cdot a_j^m \otimes x_k^{2r-m-1} &= (-1)^k \frac{[m+k]![j+k]!}{[m-j]!} q^{-j(m+k+1)} a_0^{r-1,+}, \\
   E^{j+k+r} \cdot a_j^m \otimes y_k^{2r-m-1} &= 0, \\
   E^{j+\ell-m+r} \cdot a_j^m \otimes b_\ell^{2r-m-1} &= (-1)^j \frac{[\ell]![r-1]!^2}{[m-j-\ell-1]![m-j]![m]} q^{-j(\ell+1)} a_0^{r-1,+}, \\
   E^{j+\ell-m+r} \cdot a_\ell^{2r-m-1} \otimes a_j^m &= 0, \\
   E^{j+k} \cdot x_k^{2r-m-1} \otimes a_j^m &= (-1)^k \frac{[m+k]![j+k]!}{[m-j]!} q^{(m-j)k} a_0^{r-1,+}, \\
   E^{j+k+r} \cdot y_k^{2r-m-1} \otimes a_j^m &= 0, \\
   E^{j+\ell-m+r} \cdot b_\ell^{2r-m-1} \otimes a_j^m &= (-1)^j \frac{[\ell]![r-1]!^2}{[m-j-\ell-1]![m-j]![m]} q^{-(m-j)(m-\ell)} a_0^{r-1,+}.
  \end{split}
 \end{align*}
 For all integers $0 \leqs j \leqs m \leqs r-1$, $0 \leqs k \leqs r-m-1$, and $m-j \leqs \ell \leqs m-1$, thanks to equation~\eqref{E:highest_lowest_2r-1}, we have
 \begin{align*}
  \begin{split}
   F^{r-j-k-1} \cdot a_{j+k}^{r-1,-} &= a_{r-1}^{r-1,-}, \\
   F^{r+m-j-\ell-1} \cdot a_{j+\ell-m}^{r-1,-} &= a_{r-1}^{r-1,-},
  \end{split}
 \end{align*}
 and, thanks to equations~\eqref{E:coproduct} to \eqref{E:v_0_m_2r-1}, we have
 \begin{align*}
  \begin{split}
   F^{r-j-\ell+m-1} \cdot a_j^m \otimes a_\ell^{2r-m-1} &= 0, \\
   F^{2r-j-k-1} \cdot a_j^m \otimes x_k^{2r-m-1} &= 0, \\
   F^{r-j-k-1} \cdot a_j^m \otimes y_k^{2r-m-1} &= (-1)^{j+1} \frac{[m+k]!}{[m-j]![j+k]![m]} q^{-j(m+k+1)} a_{r-1}^{r-1,-}, \\
   F^{r-j-\ell+m-1} \cdot a_j^m \otimes b_\ell^{2r-m-1} &= (-1)^{j+1} \frac{[\ell]!}{[m-j]![j+\ell-m]![m]} q^{-j(\ell+1)} a_{r-1}^{r-1,-}, \\
   F^{r-j-\ell+m-1} \cdot a_\ell^{2r-m-1} \otimes a_j^m &= 0, \\
   F^{2r-j-k-1} \cdot x_k^{2r-m-1} \otimes a_j^m &= 0, \\
   F^{r-j-k-1} \cdot y_k^{2r-m-1} \otimes a_j^m &= (-1)^{j+1} \frac{[m+k]!}{[m-j]![j+k]![m]} q^{(m-j)k} a_{r-1}^{r-1,-}, \\
   F^{r-j-\ell+m-1} \cdot b_\ell^{2r-m-1} \otimes a_j^m &= (-1)^{j+1} \frac{[\ell]!}{[m-j]![j+\ell-m]![m]} q^{-(m-j)(m-\ell)} a_{r-1}^{r-1,-}
  \end{split}
 \end{align*}
\end{proof}

\begin{lemma}
 For all integers $0 \leqs m \leqs r-1$ and $0 \leqs j \leqs m$ we have
 \begin{align}
  \begin{split}
   F^j \cdot a_0^m &= a_j^m, \\
   E^{m-j} \cdot a_m^m &= \frac{[m]![m-j]!}{[j]!} a_j^m, 
  \end{split} \label{E:j_simple}
 \end{align}
 for all integers $r \leqs m \leqs 2r-2$, $0 \leqs k \leqs m-r$, and $0 \leqs \ell \leqs 2r-m-2$ we have
 \begin{align}
  \begin{split}
   F^k \cdot x_0^m &= x_k^m, \\
   F^{m+\ell-r+1} \cdot x_0^m &= a_\ell^m, \\
   E^{r-\ell-1} \cdot y_{m-r}^m &= (-1)^\ell \frac{[m-r]![r-1]!^2}{[m+\ell-r+1]![\ell]![m+1]} a_\ell^m, \\
   E^{m-k-r} \cdot y_{m-r}^m &= \frac{[m-r]![m-k-r]!}{[k]!} y_j^m,
  \end{split} \label{E:k_l_projective}
 \end{align}
 and for every integer $0 \leqs j \leqs r-1$ we have
 \begin{align}
  \begin{split}
   F^j \cdot a_0^{r-1,+} &= a_j^{r-1,+}, \\
   E^{r-j-1} \cdot a_{r-1}^{r-1,-} &= (-1)^j \frac{[r-1]!^2}{[j]!^2} a_j^{r-1,-}.
  \end{split} \label{E:j_2r-1}
 \end{align}
\end{lemma}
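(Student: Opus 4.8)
The plan is to prove every identity by direct iteration of the $\bar{U}$-actions recorded in Section~\ref{S:small_quantum_group}, telescoping the resulting products of quantum integers and simplifying them with the periodicities $[r+t]=[t]$ and $[r-t]=-[t]$, both of which hold because $q^r=1$ (the second is repackaged in equation~\eqref{E:reversal}). This is the same bookkeeping that proves equations~\eqref{E:highest_lowest_simple}--\eqref{E:highest_lowest_2r-1}, only run in the opposite direction: there one raises or lowers an arbitrary basis vector to the highest or lowest one, here one pushes the highest or lowest one back down.

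First I would dispatch~\eqref{E:j_simple}: $F^j\cdot a_0^m=a_j^m$ is immediate from $F\cdot a_i^m=a_{i+1}^m$, while iterating $E\cdot a_i^m=[i][m-i+1]\,a_{i-1}^m$ yields the product $\prod_{i=j+1}^{m}[i][m-i+1]$, which collapses to $\tfrac{[m]!}{[j]!}[m-j]!$ after reindexing the second factor by $s=m-i+1$. For~\eqref{E:k_l_projective} on $P_m$ I would run the same computation, now tracking the two junctions $F\cdot x_{m-r}^m=a_0^m$ and $E\cdot y_0^m=a_{2r-m-2}^m$ that link the three strings $\{x_k^m\}$, $\{a_j^m\}$, $\{y_k^m\}$ of the basis: then $F^k\cdot x_0^m=x_k^m$ and $F^{m+\ell-r+1}\cdot x_0^m=a_\ell^m$ are purely iterative, and $E^{m-k-r}\cdot y_{m-r}^m$ telescopes exactly as in the simple case once one uses $[r+t]=[t]$ to rewrite $\prod_{s=r+1}^{m-k}[s]=[m-k-r]!$.

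The one computation with any substance is $E^{r-\ell-1}\cdot y_{m-r}^m$. Here the $r-\ell-1$ factors of $E$ split as: $m-r$ steps inside $\{y_k^m\}$, contributing $[m-r]!^2$ after reducing $\prod_{s=r+1}^{m}[s]$; one step $y_0^m\mapsto a_{2r-m-2}^m$; and $2r-m-2-\ell$ steps inside $\{a_j^m\}$, contributing $(-1)^{m+\ell}\tfrac{[2r-m-2]!}{[\ell]!}\cdot\tfrac{[r-1]!}{[m+\ell-r+1]!}$ — here every index $m+j+1$ that occurs lies strictly between $r$ and $2r$, so $[m+j+1]=[m+j+1-r]$. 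Multiplying the three contributions, the asserted coefficient falls out provided $(-1)^m[m-r]!\,[2r-m-2]!\,[m+1]=[r-1]!$, and I would check this last identity by setting $m=r+c$ with $0\leqs c\leqs r-2$ and applying~\eqref{E:reversal} to $[2r-m-2]!=[r-c-2]!$. This algebraic identity is the main — essentially the only — obstacle; everything around it is mechanical.

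Finally, for~\eqref{E:j_2r-1} I would use that $X_{r-1}^{\pm}\subset X^{\otimes 2r-1}$ carry standard bases, by~\eqref{E:X_r-1_+_-}, so that $E$ and $F$ act on $a_j^{r-1,\pm}$ exactly as on the abstract module $X_{r-1}$. Then $F^j\cdot a_0^{r-1,+}=a_j^{r-1,+}$ is the very definition of $a_j^{r-1,+}$, and iterating $E\cdot a_i^{r-1,-}=[i][r-i]\,a_{i-1}^{r-1,-}=-[i]^2\,a_{i-1}^{r-1,-}$ gives $E^{r-j-1}\cdot a_{r-1}^{r-1,-}=(-1)^{r-j-1}\bigl(\tfrac{[r-1]!}{[j]!}\bigr)^2 a_j^{r-1,-}$, which equals $(-1)^j\tfrac{[r-1]!^2}{[j]!^2}\,a_j^{r-1,-}$ since $r$ odd makes $(-1)^{r-j-1}=(-1)^j$. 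Throughout, the only thing to be careful about is that each exponent of $E$ or $F$ lands exactly on the intended basis vector and that each quantum integer of the form $[2r-m-2]$, $[m+j+1]$, or $[r-i]$ is reduced consistently modulo $r$.
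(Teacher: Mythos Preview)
Your proposal is correct and is precisely the ``direct computation'' the paper invokes as its entire proof; you have simply spelled out the telescoping products and the quantum-integer reductions that the paper leaves implicit, and your handling of the one nontrivial step --- the identity $(-1)^m[m-r]!\,[2r-m-2]!\,[m+1]=[r-1]!$ via~\eqref{E:reversal} --- is clean and accurate.
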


\begin{proof}
 The statement follows from a direct computation.
\end{proof}

\begin{lemma}
 For all integers $0 \leqs m \leqs r-1$ and $0 \leqs j \leqs r-1$ we have
 \begin{align}
  \begin{split}
   a_j^{r-1} &= \sum_{k = \max \{ m+j-r+1,0 \}}^{\min \{ m,j \}} \frac{[j]!}{[k]![j-k]!} q^{-(m-k)(j-k)} a_k^m \otimes a_{j-k}^{r-m-1}, \\
   a_j^{r-1} &= \sum_{k = \max \{ m+j-r+1,0 \}}^{\min \{ m,j \}} \frac{[j]!}{[k]![j-k]!} q^{(m+j-k+1)k} a_{j-k}^{r-m-1} \otimes a_k^m, \\
   a_j^{r-1,+} &= \sum_{k=0}^{\max \{ m+j-r,-1 \}} \frac{[j]!}{[k]![j-k]!} q^{-(m-k)(j-k)} a_k^m \otimes a_{j-k+m-r}^{2r-m-1} \\
   &\hspace*{\parindent} + \sum_{k = \max \{ m+j-r+1,0 \}}^{\min \{ m,j \}} \frac{[j]!}{[k]![j-k]!} q^{-(m-k)(j-k)} a_k^m \otimes x_{j-k}^{2r-m-1}, \\
   a_j^{r-1,+} &= \sum_{k=0}^{\max \{ m+j-r,-1 \}} \frac{[j]!}{[k]![j-k]!} q^{(m+j-k+1)k} a_{j-k+m-r}^{2r-m-1} \otimes a_k^m \\
   &\hspace*{\parindent} + \sum_{k = \max \{ m+j-r+1,0 \}}^{\min \{ m,j \}} \frac{[j]!}{[k]![j-k]!} q^{(m+j-k+1)k} x_{j-k}^{2r-m-1} \otimes a_k^m, \\
   a_j^{r-1,-} &= \sum_{k = \max \{ m+j-r+1,0 \}}^{\min \{ m,j \}} - \frac{[j]![m]}{[k]![j-k]!} q^{-(m-k)(j-k)} a_k^m \otimes y_{j-k}^{2r-m-1} \\
   &\hspace*{\parindent} + \sum_{k = \min \{ m+1,j+1 \}}^m \frac{[j]![r-1]!}{[k]![j-k+r]!} q^{-(m-k)(j-k)} a_k^m \otimes a_{j-k+m}^{2r-m-1}, \\
   a_j^{r-1,-} &= \sum_{k = \max \{ m+j-r+1,0 \}}^{\min \{ m,j \}} - \frac{[j]![m]}{[k]![j-k]!} q^{(m+j-k+1)k} y_{j-k}^{2r-m-1} \otimes a_k^m \\
   &\hspace*{\parindent} + \sum_{k = \min \{ m+1,j+1 \}}^m \frac{[j]![r-1]!}{[k]![j-k+r]!} q^{(m+j-k+1)k} a_{j-k+m}^{2r-m-1} \otimes a_k^m.
  \end{split} \label{E:a_j^r-1_+_-}
 \end{align}
\end{lemma}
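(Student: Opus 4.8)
The plan is to obtain each of the six identities by writing the relevant vector of $X^{\otimes r-1}$ or $X^{\otimes 2r-1}$ as a power of $F$ (or of $E$) applied to a weight vector that is a \emph{simple} tensor for the splitting under consideration, then pushing that power of $F$ (or $E$) through the tensor product by means of the coproduct formulas of equation~\eqref{E:coproduct}. Each resulting summand has two entries, which are evaluated using the explicit actions on highest- and lowest-weight vectors recorded in equations~\eqref{E:j_simple}, \eqref{E:k_l_projective}, and \eqref{E:j_2r-1}; finally equation~\eqref{E:reversal} and the identity $q^r = 1$ bring the coefficients into the stated shape, and the summation range in each line is simply the set of indices for which both tensor entries are nonzero.

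For the first two lines I would use $a_j^{r-1} = F^j \cdot a_0^{r-1}$ from equation~\eqref{E:j_simple}, together with $a_0^{r-1} = v_0^{r-1}$ from equation~\eqref{E:v_0_m_simple}: this means that under either splitting $X^{\otimes r-1} \cong X^{\otimes m} \otimes X^{\otimes r-m-1}$ or $X^{\otimes r-1} \cong X^{\otimes r-m-1} \otimes X^{\otimes m}$ the highest-weight vector factors as $a_0^m \otimes a_0^{r-m-1}$, respectively $a_0^{r-m-1} \otimes a_0^m$. Applying $\Delta(F^j)$, using the $K$-eigenvalue $q^m$ of $a_0^m$, evaluating powers of $F$ on $a_0^m$ and on $a_0^{r-m-1}$ by equation~\eqref{E:j_simple}, and absorbing the discrepancy between the two orderings — a power of $q^{\pm r}$ — with $q^r = 1$, yields the two formulas.

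For $a_j^{r-1,+}$ the same scheme applies: $a_j^{r-1,+} = F^j \cdot a_0^{r-1,+}$ by equation~\eqref{E:j_2r-1}, while equations~\eqref{E:v_0_m_simple}, \eqref{E:v_0_m_projective}, and \eqref{E:v_0_m_2r-1} give $a_0^{r-1,+} = v_0^{2r-1} = a_0^m \otimes x_0^{2r-m-1}$, so that $X_{r-1}^+$ sits inside $X_m \otimes P_{2r-m-1}$. Expanding $\Delta(F^j)$ and evaluating powers of $F$ on $x_0^{2r-m-1}$ by equation~\eqref{E:k_l_projective} produces $x_\ell^{2r-m-1}$ for $\ell \leqs r-m-1$ and the ``upper'' vector $a_{\ell-r+m}^{2r-m-1}$ for $\ell \geqs r-m$; splitting the sum at this crossover gives lines three and four. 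For $a_j^{r-1,-}$ it is cleaner to start from the lowest-weight vector: by equation~\eqref{E:j_2r-1}, $a_j^{r-1,-}$ is a known scalar multiple of $E^{r-j-1} \cdot a_{r-1}^{r-1,-}$, and by equations~\eqref{E:v_0_m_2r-1}, \eqref{E:v_0_m_simple}, and \eqref{E:v_0_m_projective}, $a_{r-1}^{r-1,-}$ is a known scalar multiple of $a_m^m \otimes y_{r-m-1}^{2r-m-1}$; expanding $\Delta(E^{r-j-1})$, evaluating powers of $E$ on $a_m^m$ by equation~\eqref{E:j_simple} and on $y_{r-m-1}^{2r-m-1}$ by equation~\eqref{E:k_l_projective} (which returns $y$-vectors up to the crossover and ``upper'' $a$-vectors beyond it), and simplifying with equation~\eqref{E:reversal} and $q^r = 1$, gives the last two formulas. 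In every case the opposite-order version follows from the same computation with the two tensor factors swapped.

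The mechanical step needing the most care, and the one most prone to sign or power-of-$q$ errors, is the bookkeeping in the four $\pm$ identities: keeping track of which string of $P_{2r-m-1}$ — the $x$-, $a$-, or $y$-string — each power of $F$ or $E$ falls into, and checking that the two summation ranges in each formula glue consistently at the crossover index $\ell = r-m-1$, equivalently $k = m+j-r$. Once equation~\eqref{E:coproduct} and the weight data are in hand, the remainder is routine.
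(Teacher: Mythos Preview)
Your proposal is correct and follows essentially the same route as the paper: applying $F^j$ to the factored highest-weight vector for $a_j^{r-1}$ and $a_j^{r-1,+}$, applying $E^{r-j-1}$ to the factored lowest-weight vector for $a_j^{r-1,-}$, expanding via the coproduct formula~\eqref{E:coproduct}, and simplifying with equations~\eqref{E:j_simple}, \eqref{E:k_l_projective}, \eqref{E:j_2r-1}, \eqref{E:reversal}, and $q^r=1$. The paper also records the $E^{r-j-1}$ derivation of the two $a_j^{r-1}$ formulas as a consistency check, but this is not needed for the statement.
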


\begin{proof}
 We have 
 \begin{align*}
  a_j^{r-1} &= F^j \cdot a_0^{r-1} \\*
  &= \sum_{k = \max \{ m+j-r+1,0 \}}^{\min \{ m,j \}} \frac{[j]!}{[k]![j-k]!} q^{k(j-k)} F^k K^{-j+k} \cdot a_0^m \otimes F^{j-k} \cdot a_0^{r-m-1} \\*
  &= \sum_{k = \max \{ m+j-r+1,0 \}}^{\min \{ m,j \}} \frac{[j]!}{[k]![j-k]!} q^{-(m-k)(j-k)} a_k^m \otimes a_{j-k}^{r-m-1},
 \end{align*}
 and
 \begin{align*}
  a_j^{r-1} &= F^j \cdot a_0^{r-1} \\*
  &= \sum_{k = \max \{ m+j-r+1,0 \}}^{\min \{ m,j \}} \frac{[j]!}{[k]![j-k]!} q^{k(j-k)} F^{j-k} K^{-k} \cdot a_0^{r-m-1} \otimes F^k \cdot a_0^m \\*
  &= \sum_{k = \max \{ m+j-r+1,0 \}}^{\min \{ m,j \}} \frac{[j]!}{[k]![j-k]!} q^{(m+j-k+1)k} a_{j-k}^{r-m-1} \otimes a_k^m.
 \end{align*}
 Similarly, we have 
 \begin{align*}
  a_j^{r-1} &= (-1)^j \frac{[j]!^2}{[r-1]!^2} E^{r-j-1} \cdot a_{r-1}^{r-1} \\*
  &= \sum_{k = \max \{ m+j-r+1,0 \}}^{\min \{ m,j \}} (-1)^{j+k} \frac{[j]![m+j-k]!}{[m-k]![r-1]!^2} q^{-(m-k)(m+j-k+1)} \\*
  &\hspace*{\parindent} E^{m-k} \cdot a_m^m \otimes E^{r-m-j+k-1} K^{m-k} \cdot a_{r-m-1}^{r-m-1} \\*
  &= \sum_{k = \max \{ m+j-r+1,0 \}}^{\min \{ m,j \}} \frac{[j]!}{[k]![j-k]!} q^{-(m-k)(j-k)} a_k^m \otimes a_{j-k}^{r-m-1}
 \end{align*}
 and 
 \begin{align*}
  a_j^{r-1} &= (-1)^j \frac{[j]!^2}{[r-1]!^2} E^{r-j-1} \cdot a_{r-1}^{r-1} \\*
  &= \sum_{k = \max \{ m+j-r+1,0 \}}^{\min \{ m,j \}} (-1)^{j+k} \frac{[j]![m+j-k]!}{[m-k]![r-1]!^2} q^{-(m-k)(m+j-k+1)} \\*
  &\hspace*{\parindent} E^{r-m-j+k-1} \cdot a_{r-m-1}^{r-m-1} \otimes E^{m-k} K^{-m-j+k-1} \cdot a_m^m \\*
  &= \sum_{k = \max \{ m+j-r+1,0 \}}^{\min \{ m,j \}} \frac{[j]!}{[k]![j-k]!} q^{(m+j-k+1)k} a_{j-k}^{r-m-1} \otimes a_k^m.
 \end{align*}
 Similarly, we have 
 \begin{align*}
  a_j^{r-1,+} &= F^j \cdot a_0^{r-1,+} \\*
  &= \sum_{k=0}^{\min \{ m,j \}} \frac{[j]!}{[k]![j-k]!} q^{k(j-k)} F^k K^{-j+k} \cdot a_0^m \otimes F^{j-k} \cdot x_0^{2r-m-1} \\*
  &= \sum_{k=0}^{\max \{ m+j-r,-1 \}} \frac{[j]!}{[k]![j-k]!} q^{-(m-k)(j-k)} a_k^m \otimes a_{j-k+m-r}^{2r-m-1} \\*
  &\hspace*{\parindent} + \sum_{k = \max \{ m+j-r+1,0 \}}^{\min \{ m,j \}} \frac{[j]!}{[k]![j-k]!} q^{-(m-k)(j-k)} a_k^m \otimes x_{j-k}^{2r-m-1}
 \end{align*}
 and
 \begin{align*}
  a_j^{r-1,+} &= F^j \cdot a_0^{r-1,+} \\*
  &= \sum_{k=0}^{\min \{ m,j \}} \frac{[j]!}{[k]![j-k]!} q^{k(j-k)} F^{j-k} K^{-k} \cdot x_0^{2r-m-1} \otimes F^k \cdot a_0^m \\*
  &= \sum_{k=0}^{\max \{ m+j-r,-1 \}} \frac{[j]!}{[k]![j-k]!} q^{(m+j-k+1)k} a_{j-k+m-r}^{2r-m-1} \otimes a_k^m \\*
  &\hspace*{\parindent} + \sum_{k = \max \{ m+j-r+1,0 \}}^{\min \{ m,j \}} \frac{[j]!}{[k]![j-k]!} q^{(m+j-k+1)k} x_{j-k}^{2r-m-1} \otimes a_k^m.
 \end{align*}
 Similarly, we have 
 \begin{align*}
  a_j^{r-1,-} &= (-1)^j \frac{[j]!^2}{[r-1]!^2} E^{r-j-1} \cdot a_{r-1}^{r-1,-} \\*
  &= \sum_{k = \max \{ m+j-r+1,0 \}}^m (-1)^{j+k+1} \frac{[j]![m+j-k]![m]}{[m-k]![r-1]!^2} q^{-(m-k)(m+j-k+1)} \\*
  &\hspace*{\parindent} E^{m-k} \cdot a_m^m \otimes E^{r-m-j+k-1} K^{m-k} \cdot y_{r-m-1}^{2r-m-1} \\*
  &= \sum_{k = \max \{ m+j-r+1,0 \}}^{\min \{ m,j \}} - \frac{[j]![m]}{[k]![j-k]!} q^{-(m-k)(j-k)} a_k^m \otimes y_{j-k}^{2r-m-1} \\*
  &\hspace*{\parindent} + \sum_{k = \min \{ m+1,j+1 \}}^m \frac{[j]![r-1]!}{[k]![j-k+r]!} q^{-(m-k)(j-k)} a_k^m \otimes a_{j-k+m}^{2r-m-1}
 \end{align*} 
 and
 \begin{align*}
  a_j^{r-1,-} &= (-1)^j \frac{[j]!^2}{[r-1]!^2} E^{r-j-1} \cdot a_{r-1}^{r-1,-} \\*
  &= \sum_{k = \max \{ m+j-r+1,0 \}}^m (-1)^{j+k+1} \frac{[j]![m+j-k]![m]}{[m-k]![r-1]!^2} q^{-(m-k)(m+j-k+1)} \\*
  &\hspace*{\parindent} E^{r-m-j+k-1} \cdot y_{r-m-1}^{2r-m-1} \otimes E^{m-k} K^{-m-j+k-1} \cdot a_m^m \\*
  &= \sum_{k = \max \{ m+j-r+1,0 \}}^{\min \{ m,j \}} - \frac{[j]![m]}{[k]![j-k]!} q^{(m+j-k+1)k} y_{j-k}^{2r-m-1} \otimes a_k^m \\*
  &\hspace*{\parindent} + \sum_{k = \min \{ m+1,j+1 \}}^m \frac{[j]![r-1]!}{[k]![j-k+r]!} q^{(m+j-k+1)k} a_{j-k+m}^{2r-m-1} \otimes a_k^m. \qedhere
 \end{align*}
\end{proof}


\begin{thebibliography}{AA00}
 
 \bibitem[Ar08]{A08}
 Y.~Arike,
 \textit{A Construction of Symmetric Linear Functions on the Restricted Quantum Group $\bar{U}_q(\mathfrak{sl}_2)$},
 \doi{10.18910/10316}{Osaka J. Math. \textbf{47} (2010), no.~2, 535--557};
 \arxiv{0807.0052}{[math.QA]}.
 
 \bibitem[AST15]{AST15}
 H.~Andersen, C.~Stroppel, D.~Tubbenhauer,
 \textit{Cellular Structures Using $U_q$-Tilting Modules},
 \doi{10.2140/pjm.2018.292.21}{Pacific J. Math. \textbf{292} (2018), no.~1, 21--59};
 \arxiv{1503.00224}{[math.QA]}.
 
 
 \bibitem[DGP17]{DGP17}
 M.~De Renzi, N.~Geer, B.~Patureau-Mirand,
 \textit{Renormalized Hennings Invariants and 2+1-TQFTs},
 \doi{10.1007/s00220-018-3187-8}{Commun. Math. Phys. \textbf{362} (2018), no.~3, 855--907};
 \arxiv{1707.08044}{[math.GT]}.
 
 \bibitem[DGGPR19]{DGGPR19}
 M.~De Renzi, A.~Gainutdinov, N.~Geer, B.~Patureau-Mirand, I.~Runkel, 
 \textit{3-Di\-men\-sion\-al TQFTs From Non-Semisimple Modular Categories},
 \arxiv{1912.02063}{[math.GT]} 
 
 \bibitem[DGGPR20]{DGGPR20}
 M.~De~Renzi, A.~Gainutdinov, N.~Geer, B.~Patureau-Mirand, I.~Runkel,
 \textit{Mapping Class Group Representations from Non-Semisimple TQFTs}, 
 \arxiv{2010.14852}{[math.GT]}.
 
 \bibitem[DM20]{DM20}
 M.~De Renzi, J.~Murakami,
 \textit{Non-Semisimple 3-Manifold Invariants Derived From the Kauffman Bracket},
 \arxiv{2007.10831}{[math.GT]}
 
 \bibitem[FGST05]{FGST05} 
 B.~Feigin, A.~Gainutdinov, A.~Semikhatov, I.~Tipunin, 
 \textit{Modular Group Representations and Fusion in Logarithmic Conformal Field Theories and in the Quantum Group Center},
 \doi{10.1007/s00220-006-1551-6}{Commun. Math. Phys. \textbf{265} (2006), no.~1, 47--93};
 \href{https://arxiv.org/abs/hep-th/0504093}{\texttt{arXiv:hep-th/0504093}}.

 \bibitem[FK97]{FK97}
 I.~Frenkel, M.~Khovanov,
 \textit{Canonical Bases in Tensor Products and Graphical Calculus for $U_q(\fsl_2)$},
 \doi{10.1215/S0012-7094-97-08715-9}{Duke Math. J. \textbf{87} (1997), no.~3, 409--480}.
 
 \bibitem[GST12]{GST12}
 A.~Gainutdinov, H.~Saleur, I.~Tipunin, 
 \textit{Lattice W-Algebras and Logarithmic CFTs},
 \href{https://iopscience.iop.org/article/10.1088/1751-8113/47/49/495401}{J. Phys. A  \textbf{47} (2006), no.~49, 495401};
 \arxiv{1212.1378}{[hep-th]}.
 
 \bibitem[GW93]{GW93}
 F.~Goodman, H.~Wenzl,
 \textit{The Temperley--Lieb Algebra at Roots of Unity},
 \doi{10.2140/pjm.1993.161.307}{Pacific J. Math. \textbf{161} (1993), no.~2, 307--334}.
 
 \bibitem[Ib15]{I15}
 E.~Ibanez,
 \textit{Idempotents de Jones--Wenzl Évaluables aux Racines de l’Unité et Représentation Modulaire sur le Centre de $U_q \fsl(2)$},
 \href{https://hal.archives-ouvertes.fr/tel-01300189}{Algèbres Quantiques [math.QA], Université de Montpellier (2015), Français, \texttt{tel-01300189}};
 \arxiv{1604.03681}{[math.QA]}.
 
 \bibitem[Jo83]{J83}
 V.~Jones,
 \textit{Index for Subfactors},
 \doi{10.1007/BF01389127}{Invent. Math. \textbf{72} (1983), no.~1, 1--25}.
 
 \bibitem[Ka95]{K95}
 C.~Kassel,
 \textit{Quantum Groups},
 \doi{10.1007/978-1-4612-0783-2}{Graduate Texts in Mathematics \textbf{155}, Springer-Verlag, New York, 1995}.
 
 \bibitem[KS09]{KS09}
 H.~Kondo, Y.~Saito,
 \textit{Indecomposable Decomposition of Tensor Products of Modules Over the Restricted Quantum Universal Enveloping Algebra Associated to $\fsl_2$},
 \doi{10.1016/j.jalgebra.2011.01.010}{J. Algebra \textbf{330} (2011), 103--129};
 \arxiv{arXiv:0901.4221}{[math.QA]}.
 
 \bibitem[Li97]{L97}
 W.~Lickorish,
 \textit{An Introduction to Knot Theory},
 \doi{10.1007/978-1-4612-0691-0}{Graduate Texts in Mathematics \textbf{175}, Springer-Verlag, New York, 1997}.
 
 \bibitem[Lu90]{L90}
 G.~Lusztig,
 \textit{Finite Dimensional Hopf Algebras Arising from Quantized Universal Enveloping Algebras},
 \doi{10.1090/S0894-0347-1990-1013053-9}{J. Amer. Math. Soc. \textbf{3} (1990), no.~1, 257--296}.
 
 \bibitem[Ly94]{L94} 
 V.~Lyubashenko,
 \textit{Invariants of 3-Manifolds and Projective Representations of Mapping Class Groups via Quantum Groups at Roots of Unity},
 \doi{10.1007/BF02101805}{Comm. Math. Phys. \textbf{172} (1995), no.~3, 467--516};
 \href{http://arXiv.org/abs/hep-th/9405167}{\texttt{arXiv:hep-th/9405167}}.
 
 \bibitem[Ma95]{M95}
 S.~Majid, 
 \textit{Foundations of Quantum Group Theory},
 \doi{10.1017/CBO9780511613104}{Cambridge University Press, 1995}.
 
 \bibitem[MM92]{MM92}
 P.~Martin, D.~McAnally,
 \textit{On Commutants, Dual Pairs and Non-Semisimple Algebras from Statistical Mechanics},
 \doi{10.1142/S0217751X92003987}{Internat. J. Modern Phys. A \textbf{7} (1992), Suppl. 1B, 675--705}.
 
 \bibitem[Mo17]{M17}
 S.~Moore, 
 \textit{Non-Semisimple Planar Algebras from the Representation Theory of $\bar{U}_q(\fsl_2)$},
 \doi{10.1142/S0129055X18500174}{Rev. Math. Phys. \textbf{30} (2018), no.~9, 1850017};
 \arxiv{1703.00271}{[math.QA]}.

 \bibitem[Mo18]{M18}
 S.~Moore, 
 \textit{Diagrammatic Morphisms Between Indecomposable Modules of $\bar{U}_q(\fsl_2)$},
 \doi{10.1142/S0129167X20500160}{Internat. J. Math. \textbf{31} (2020), no.~02, 2050016};
 \arxiv{1811.02437}{[math.QA]}.
 
 \bibitem[Ro93]{R93}
 M.~Rosso,
 \textit{Quantum Groups at a Root of 1 and Tangle Invariants},
 \doi{10.1142/S0217979293003462}{Internat. J. Modern Phys. B \textbf{7} (1993), no.~20-21, 3715--3726.}
 
 \bibitem[Su94]{S94}
 R.~Suter,
 \textit{Modules Over $\mathfrak{U}_q(\fsl_2)$},
 \doi{10.1007/BF02102012}{Comm. Math. Phys. \textbf{163} (1994), no.~2, 359--393}.
 
 \bibitem[TL71]{TL71}
 H.~Temperley, E.~Lieb, 
 \textit{Relations Between the `Percolation' and `Colouring' Problem and Other Graph-Theoretical Problems Associated with Regular Planar Lattices: Some Exact Results for the `Percolation' Problem},
 \doi{10.1142/S0129055X18500174}{Proc. Roy. Soc. London Ser. A \textbf{322} (1971), no.~1549, 251--280}.
 
 \bibitem[We87]{W87}
 H.~Wenzl,
 \textit{On Sequences of Projections},
 \href{https://mr.math.ca/article/on-sequences-of-projections/}{C. R. Math. Acad. Sci. Soc. R. Can. \textbf{9} (1987), no.~1, 5--9}.
 
\end{thebibliography}
\end{document}